\theoremstyle{plain}
\newtheorem{thm}{Theorem}[section]
\newtheorem{cor}[thm]{Corollary}
\newtheorem{lem}[thm]{Lemma}
\newtheorem{remno}[thm]{Notation and Observation}
\newtheorem{prop}[thm]{Proposition}
\newtheorem{rem}[thm]{Remark}
\theoremstyle{definition}
\newtheorem{ex}[thm]{Example}
\def\implies{\ifmmode\Rightarrow \else
	\unskip${}\Rightarrow{}$\ignorespaces\fi}
\def\Index{\mathrm{index}}
\def\projdim{\mathrm{pd}}
\def\im{\mathrm{Im\ }}
\def\supp{\mathrm{supp}}
\newcommand{\x}[1]{\mathbf{x}_{#1}}
\newcommand{\e}[1]{\mathbf{e}_{#1}}
	\definecolor{darkblue}{rgb}{0.0, 0.0, 0.55}
\begin{document}
	
	\title[Edge ideals with almost maximal finite index]{Edge ideals with almost maximal finite index and their powers} 
	\author[M.~Bigdeli]{Mina Bigdeli}
	\address{School of Mathematics\\ Institute for Research in Fundamental Sciences (IPM)\\  P.O.Box: 19395-5746\\ Tehran, Iran}
	\email{mina.bigdeli98@gmail.com, mina.bigdeli@ipm.ir}
	
	\subjclass[2010]{Primary 13D02, 13C13; Secondary 05E40, 05C75}
	\keywords{Edge ideal, Graph, Index,  Linear resolution, Projective Dimension, Regularity}
	
	\begin{abstract}
		A graded ideal $I$ in  $\mathbb{K}[x_1,\ldots,x_n]$, where $\mathbb{K}$ is a field,  is said to have almost maximal finite index  if its minimal free resolution is linear  up to the  homological degree $\projdim(I)-2$, while it is  not linear at the homological degree $\projdim(I)-1$, where $\projdim(I)$ denotes the projective dimension of $I$. In this paper we  classify  the graphs whose edge ideals have this property.  This in particular shows  that for edge ideals  the property of having almost maximal finite index does not depend on the characteristic of $\mathbb{K}$. We also compute the non-linear Betti numbers of these ideals. Finally, we show that  for the edge ideal $I$ of a graph $G$ with almost maximal finite index,  the ideal $I^s$ has a linear resolution for $s\geq 2$ if and only if the complementary graph $\bar{G}$ does not contain induced cycles of length $4$.
	\end{abstract}
	\dedicatory{Dedicated to J\"urgen Herzog,  on the occasion
     of his $80$th birthday}
	\maketitle

	\section*{Introduction}
	In this paper, we consider the edge ideals whose minimal free resolution has relatively large number of linear steps. Let $I$ be a graded ideal in the polynomial ring $S=\mathbb{K}[x_1,\ldots,x_n]$, where $\mathbb{K}$ is  a field, generated by homogeneous polynomials of degree $d$. The ideal is called $r$-steps linear, if $I$ has a linear resolution up to the homological degree $r$, that is the graded Betti numbers $\beta_{i,i+j}(I)$ vanish  for all $i\leq r$ and all $j>d$. The number
	\[
	\Index(I)=\inf\{r:\  \text{$I$ is not $r$-steps linear}\}
	\]
	is called the  Green--Lazarsfeld index (or briefly index) of $I$. A related invariant, called the $N_{d,r}$-property, was first considered by Green and Lazarsfeld in \cite{GL1, GL2}.  In the paper \cite{BC} the  index was introduced for the quotient ring $S/I$, where $I$ is generated by quadratics,  to be the largest integer $r$ such that the $N_{2,r}$-property holds. It is in general  very hard to determine the value of the index. One reason is that this value, in general, depends on the characteristic of $\mathbb{K}$. The index of quadratic monomial ideals is more studied in the literature taking  advantage of  some combinatorial methods. Indeed, since the index  is preserved passing through polarization, one may reduce to the case of squarefree quadratic monomial ideals which can be viewed as the edge ideals of simple graphs, and the index of these ideals is proved to be characteristic independent, see \cite[Theorem~2.1]{EGHP}. 

					The main question  regarding the study of the index of edge ideals is to classify the graphs with respect to the index of their  edge ideals, in particular, it is more interesting to see when the   index attains its largest or smallest value. 
		In 1990,  Fr\"oberg \cite{Fr} classified the graphs whose edge ideals  have  a linear resolution. A graded ideal $I$ is said to have a linear resolution if $\Index(I)=\infty$. In fact Fr\"oberg showed that given a graph $G$, its edge ideal $I(G)$ has a linear resolution over all fields if and only if the complement $\bar{G}$ of $G$ is chordal, which means that    all cycles in $\bar{G}$ of length $>3$ have a chord. In 2005, Eisenbud et al. \cite{EGHP} gave a purely combinatorial description of the index of edge ideals in terms of the size of the smallest  cycle(s) of length $>3$ in the complementary graph, c.f. Theorem~\ref{index of graphs}. This result shows that the index gets its smallest value $1$ if and only if $G$ admits a gap, i.e. $\bar{G}$ contains an  induced cycle of length $4$. If the index of $I$  attains the largest  finite value, we have $\Index(I)=\projdim(I)$, where $\projdim(I)$ denotes the projective dimension of $I$. In this case the  ideal $I$ is said to have maximal finite index, see \cite{BHZ}. In  \cite[Theorem~4.1]{BHZ}, it was shown that the edge ideal $I(G)$  has maximal finite index if and only if $\bar{G}$ is  a cycle of length~$>3$. In this paper, we proceed one more step and consider the edge ideals $I(G)$ with $\Index(I(G))=\projdim(I(G))-1$. We  call them edge ideals with almost maximal finite index. In Section~\ref{classify}  of this paper we  precisely determine the simple graphs whose edge ideals have this property, see Theorem~\ref{check-out}. These graphs are presented in Figures~\ref{type a}--\ref{type d}. In particular, it is deduced that the property of having almost maximal finite index is characteristic independent for edge ideals, though this is not the case for ideals generated in higher degrees, as discussed in the beginning of Section~\ref{classify}. It is also seen that the graded Betti numbers of these edge ideals do not depend on the characteristic of the base field. 		We will  compute the   Betti numbers in the non-linear strands in Proposition~\ref{Bettis of almost}. 	The main tool used throughout this section   is  Hochster's formula, Formula~(\ref{Hochster}).

In the second half of the paper we study the index of  powers of edge ideals with almost maximal finite index.	 Although, for arbitrary ideals, many properties  such as depth, projective dimension or regularity stabilize for large powers (see e.g., \cite{Ba,Ch,Ca,Co, CHT,HH,  HHZh1,HHZ1,HW}), their initial behaviour is often quite mysterious. However, edge ideals behave more controllable from the beginning. In the study of the  index of powers of edge ideals, one of the main  results is due to Herzog, Hibi and Zheng  \cite[Theorem~3.2]{HHZh1}. They  showed that for a graph $G$, all  powers of the edge ideal $I(G)$ have a linear resolution if and only if so does $I(G)$. On the other hand, it was shown in \cite[Theorem~3.1]{BHZ} that  all  powers of $I(G)$ have  index $1$ if and only if  $I(G)$ has also index $1$. In the same paper it was proved that if $I(G)$ has maximal finite index~$>1$, then  $I(G)^s$ has a linear resolution for all $s\geq 2$. This shows that chordality of the complement of $G$ is not a necessary condition on $G$ so that all high powers of its edge ideal have a linear resolution. Francisco, H\`a and Van~Tuyl proved, in a personal communication, that being gap-free is a necessary condition for a graph $G$  in order that a power of its edge ideal has a linear resolution (see also \cite[Proposition~1.8]{NP}).  However, Nevo and Peeva showed, by an example,  that  being gap-free alone  is not  a sufficient condition  so that all high powers of the edge ideal have a linear resolution \cite[Counterexample~1.10]{NP}. Later, Banerjee \cite{Ba}, and Erey \cite{Er, Er1} respectively proved that  if a gap-free  graph $G$ is also cricket-free or diamond-free or $C_4$-free, then the  ideal $I(G)^s$ has a linear resolution for all $s\geq 2$.  The definition of these concepts are recalled in Section~\ref{powers of almost maximal}.  

	Section~\ref{powers of almost maximal} is devoted to answer the question whether the high powers of edge ideals with almost maximal finite index have a linear resolution. Not all graphs whose edge ideals have this property are cricket-free or diamond-free.	However,  using some formulas for an upper bound of the regularity of either powers of edge ideals or in general monomial ideals offered in \cite[Theorem~5.2]{Ba},  and \cite[Lemma~2.10]{DHS} respectively,  we give a positive answer to this question in case the graphs are gap-free, see Theorem~\ref{powers}. We will prove this theorem in several parts, mainly in Theorem~\ref{main G_a3} and Theorem~\ref{I^k has lin res}.
	
	 	   Theorem~\ref{powers} together with \cite[Theorem~4.1]{BHZ} yield the following consequence which is a partial  generalization of  the result of Herzog et al. in \cite[Theorem~3.2]{HHZh1}.
	 	  \begin{thm}\label{bound}
	 	  	Let $G$ be a simple gap-free graph and let $I\subset S$ be its edge ideal. Suppose $\projdim(I)\!-\!\Index(I)\leq\!1$. Then $I^s$ has a linear resolution over all fields for any $s\geq 2$.
	 	  \end{thm}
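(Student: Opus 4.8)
The plan is to run a short case analysis on the value of $\projdim(I)-\Index(I)$, under the convention (consistent with the definition given above) that $\Index(I)=\infty$, and hence $\projdim(I)-\Index(I)=-\infty$, exactly when $I$ has a linear resolution. For an ideal generated in a single degree, whenever the resolution is not purely linear there is a smallest homological degree carrying a nonlinear Betti number; this degree is $\Index(I)$, it is at least $1$ (degree $0$ cannot carry a nonlinear Betti number, as $I$ is generated in one degree), and it is at most $\projdim(I)$ (Betti numbers vanish above that degree). Thus the hypothesis $\projdim(I)-\Index(I)\leq 1$ leaves precisely three possibilities: (a) $I$ has a linear resolution; (b) $\Index(I)=\projdim(I)$, i.e.\ $I$ has maximal finite index; (c) $\Index(I)=\projdim(I)-1$, i.e.\ $I$ has almost maximal finite index.

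Case (a) is immediate: here \cite[Theorem~3.2]{HHZh1} applies and gives that $I^s$ has a linear resolution for every $s\geq 1$ over any field. For the remaining two cases I would first record that the gap-freeness of $G$ is equivalent, via the Eisenbud et al.\ description (Theorem~\ref{index of graphs}), to $\Index(I)\geq 2$, so the bottom value $\Index(I)=1$ is excluded and in particular $\projdim(I)\geq 2$ in cases (b) and (c). In case (b) the ideal $I$ then has maximal finite index strictly bigger than $1$, so the result of \cite{BHZ} quoted in the introduction applies (equivalently, by \cite[Theorem~4.1]{BHZ} such a $G$ has $\bar G$ a cycle of length $>3$, a case handled there), yielding that $I^s$ has a linear resolution for all $s\geq 2$. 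In case (c), the standing hypotheses are exactly those of Theorem~\ref{powers} — $G$ gap-free and $I=I(G)$ of almost maximal finite index — so Theorem~\ref{powers} gives the same conclusion, $I^s$ has a linear resolution for all $s\geq 2$ and over all fields. Assembling the three cases proves the statement.

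The one point to verify carefully is that the trichotomy is genuinely exhaustive once the gap-free hypothesis is in force, so that no degenerate configuration with $\projdim(I)\leq 1$ slips through; but gap-freeness forces $\Index(I)\geq 2$, hence $\projdim(I)\geq 2$, closing this gap. Seen this way, Theorem~\ref{bound} is essentially a packaging of three prior results, and the genuine difficulty lies entirely inside Theorem~\ref{powers} (established through Theorem~\ref{main G_a3} and Theorem~\ref{I^k has lin res} with the help of the regularity bounds of \cite[Theorem~5.2]{Ba} and \cite[Lemma~2.10]{DHS}); the deduction of Theorem~\ref{bound} from it is routine bookkeeping.
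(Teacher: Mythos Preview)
Your proposal is correct and follows essentially the same approach as the paper: the paper simply states that Theorem~\ref{bound} is a consequence of Theorem~\ref{powers} together with \cite[Theorem~4.1]{BHZ} (and implicitly \cite[Theorem~3.2]{HHZh1}), which is exactly your trichotomy into the linear, maximal finite index, and almost maximal finite index cases. Your write-up is in fact more explicit than the paper's one-line justification, including the observation that gap-freeness forces $\Index(I)\geq 2$ so that the $\Index(I)=1$ case is excluded; the only cosmetic point is that the powers result in the maximal finite index case is \cite[Corollary~4.4]{BHZ} rather than Theorem~4.1 (which is the structural characterization).
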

	 	One may ask which is the largest integer $c$ such that Theorem~\ref{bound} remains valid  if one replaces $\projdim(I) - \Index(I) \leq 1$ by $\projdim(I) - \Index(I) \leq c$. Computation by {\em Macaulay~2}, \cite{M2}, shows that in the example of Nevo and Peeva \cite[Counterexample~1.10]{NP},  $\Index(I)=2$,  and $\projdim(I)=8$. Hence $c$ must be an integer with $1\leq c\leq 5$. 
	 
	 	\subsection*{Acknowledgement} Research was supported by a grant from IPM. This work was initiated   while the  author was resident at MSRI during the Spring 2017 semester and supported by National Science Foundation under Grant No. DMS-1440140. Theorem~\ref{check-out} is a consequence of a question David Eisenbud asked the author.  She would like to thank him for the invaluable discussions  throughout her postdoctoral fellowship at MSRI. She also extends her gratitude to Rashid Zaare-Nahandi for his comments on this manuscript.
		
		Finally, the author would like to express her  appreciation to the anonymous referee for the remarkable comments and useful suggestions which helped to improve the manuscript. 
	
		\section{Preliminaries}\label{section 1}
			In this section we recall some concepts, definitions and results from Commutative Algebra and Combinatorics which will be used throughout the paper. Let $S=\mathbb{K}[x_1, \ldots, x_n]$ be the polynomial ring over a field $\mathbb{K}$ with $n$ variables, and let $M$ be a finitely generated graded $S$-module. Let the sequence 
		$$
		0\to  F_p\to\cdots \to F_2 \to F_1 \to F_0 \to M \to 0
		$$
		be the minimal graded free  resolution of $M$, where for  all $i \geq 0$ the modules $F_i = \oplus_j S(-j)^{\beta_{i,j}^{\mathbb{K}}(M)}$ are free $S$-modules of rank $\beta_{i}^{\mathbb{K}}(M):=\sum_{j}\beta_{i,j}^{\mathbb{K}}(M)$.
		The numbers $\beta_{i,j}^{\mathbb{K}}(M) = \dim_{\mathbb{K}} \mbox{Tor}^S_i(M, \mathbb{K})_j$ are called the \textit{graded Betti numbers} of $M$ and $\beta_i^{\mathbb{K}}(M)$ is called the $i$-th {\em Betti number} of $M$. We write $\beta_{i,j}(M)$ for $\beta_{i,j}^{\mathbb{K}}(M)$ when the field is fixed. 
		The {\em projective dimension} of $M$, denoted by $\projdim(M)$, is the largest $i$ for which $\beta_{i}(M)\neq 0$. The  {\em Castelnuovo-Mumford regularity } of $M$, $\mathrm{reg}(M)$, is defined to be $$\mathrm{reg}(M)=\sup\{j-i:\ \beta_{i,j}(M)\neq 0\}.$$ 
	
		
		Let $I$ be a graded ideal of $S$ generated in a single degree $d$. 
		The {\em Green--Lazarsfeld index} (briefly index) of $I$, denoted by $\Index(I)$, is defined to be 
		$$\Index(I)=\inf\{i:\ \beta_{i,j}(I)\neq 0,\ \text{for some } j>i+d\}.$$
		Since $\beta_{0,j}(I)=0$ for all $j>d$, one always has $\Index(I)\geq 1$. 
			  The ideal $I$ is said to have a {\em $d$-linear resolution} if $\Index(I)=\infty$. This means that for all $i$,  $\beta_{i}(I)=\beta_{i,i+d}(I)$, and this is the case if and only if $\mathrm{reg}(I)=d$. Otherwise $\Index(I)\leq \projdim(I)$. In case $I$ has the largest possible finite index, that is $\Index(I)=\projdim(I)$, $I$ is said to have {\em maximal finite index}. 
		
				\medspace
		In Section~\ref{classify} of this paper we deal with squarefree monomial ideals generated in degree $2$. These ideals are the edge ideals of simple graphs.  Recall that  a {\em simple} graph is a graph with no loops and no multiple edges, and given a graph $G$ on the vertex set $[n]:=\{1,\ldots,n\}$, its edge ideal $I(G)\subset S$ is an ideal generated by all quadratics $x_ix_j$, where $\{i,j\}$ is an edge in $G$. We denote by $E(G)$ the set of all edges of $G$, and by $V(G)$ the vertex set of $G$. For a vertex $v\in V(G)$, the neighbourhood $N_G(v)$ of $v$ in $G$  is defined to be 
		$$N_G(v)=\{u\in V(G):\  \{u,v\}\in E(G) \}.$$
		The  complement $\bar{G}$ of $G$ is a graph on $V(G)$ whose edges are those pairs of $V(G)$ which do not belong to $E(G)$. 	The simplicial  complex 
		 $$\Delta(G)=\{F\subseteq V(G):\ \text{for all } \{i,j\}\subseteq F \text{ one has } \{i,j\} \in E(G)\}$$
		  is called the {\em flag complex} of $G$. The {\em{independence complex}} of $G$ is the flag complex of $\bar{G}$.	One can check that $I(G)=I_{\Delta(\bar{G})}$, where $I_{\Delta(\bar{G})}$ is the Stanley-Reisner ideal of $\Delta(\bar{G})$. 
		 We assume that the reader is familiar with the  definition and elementary properties of simplicial complexes.  For more details  consult with \cite{HHBook}.

		 \medspace
		 The main tool  used widely in Section~\ref{classify} for the computation of the graded Betti numbers  is Hochster's formula~\cite[Theorem~8.1.1]{HHBook}.  Let $\Delta$ be a simplicial complex on $[n]$, and let $\tilde{C}(\Delta, \mathbb{K})$ be the augmented oriented chain complex of $\Delta$ over a field $\mathbb{K}$ with the  differentials 
		 						\begin{align*}
&\quad\quad\quad\quad\quad\quad	\partial_i: \bigoplus_{F\in\Delta\atop \dim F=i}\mathbb{K}F\to \bigoplus_{G\in\Delta\atop \dim G=i-1}\mathbb{K}G,\\ 
&\partial_i([v_0,\ldots,v_{i}])=\sum_{0\leq j\leq i}(-1)^{j}[v_0,v_1,\ldots, v_{j-1},v_{j+1}, \ldots, v_{i}],
						\end{align*}
						where by $[v_0,v_1,\ldots,v_{i}]$ we mean the face $\{v_0,v_1,\ldots,v_{i}\}\subseteq [n]$ of $\Delta$ with $v_0<v_1<\cdots<v_i$.
		 		 Hochster's formula states that for the Stanley-Reisner ideal $I:=I_{\Delta}\subset S$  one has
		 \begin{eqnarray}\label{Hochster}
		 \beta_{i,j}(I)=\sum_{W\subseteq [n],\ |W|=j} \dim_\mathbb{K} \widetilde{H}_{j-i-2}(\Delta_W;\mathbb{K}),
		 \end{eqnarray}
		 where $\Delta_W$ is the induced subcomplex of $\Delta$ on $W$ and $\widetilde{H}_i(\Delta_W;\mathbb{K})$ is the $i$-th reduced homology of the complex $\widetilde{C}(\Delta_W, \mathbb{K})$. We denote by $\partial_i^W$ the differentials of the chain complex $\widetilde{C}(\Delta_W, \mathbb{K})$.

		 \medspace
		 Theorem~\ref{index of graphs} which is due to  Eisenbud et al. \cite{EGHP}  provides a combinatorial method for determining the index of the edge ideal of a graph.  To this end, one needs to consider the length of the minimal cycles of the complementary graph. A minimal cycle is  an induced cycle of length$>3$, and by an induced cycle we mean a cycle with no chord. The length of an induced cycle $C$ is denoted by $|C|$.
		\begin{thm}[{\cite[Theorem~2.1]{EGHP}}]\label{index of graphs}
			Let $I(G)$ be the edge ideal of a simple graph $G$. Then 
			$$\Index(I(G))=\inf\{|C|: \ C \text{ is a minimal cycle} \text{ in } \bar{G}\}-3.$$
		\end{thm}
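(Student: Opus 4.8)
The plan is to read everything off Hochster's formula~(\ref{Hochster}) applied to the flag complex $\Delta:=\Delta(\bar{G})$, for which $I(G)=I_\Delta$ and for which $\Delta_W=\Delta(\bar{G}_W)$ for every $W\subseteq V(G)$, since an induced subcomplex of a flag complex is again the flag complex of the corresponding induced subgraph. Set
$$g:=\inf\{|C|:\ C\text{ is a minimal cycle in }\bar{G}\},$$
so that $g=\infty$ precisely when $\bar{G}$ is chordal. As $I(G)$ is generated in degree $2$, Theorem~\ref{index of graphs} is equivalent to showing that $g-3$ is both an upper and a lower bound for $\Index(I(G))$. For the upper bound (when $g<\infty$), I would pick a minimal cycle $C$ of $\bar{G}$ with $|C|=g$ and put $W:=V(C)$, so $\Delta_W=\Delta(C)$. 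Since $g\geq 4$, the cycle $C$ has no triangle, hence $\Delta(C)=C$ is a triangulated $1$-sphere and $\widetilde{H}_1(\Delta_W;\mathbb{K})\neq 0$; the summand indexed by this $W$ in~(\ref{Hochster}) then gives $\beta_{g-3,\,g}(I(G))\neq 0$, which is non-linear because $g>(g-3)+2$. Thus $\Index(I(G))\leq g-3$.

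The substance of the theorem is the lower bound, which I would isolate as the following combinatorial--topological lemma, valid over any field $\mathbb{K}$ (this is also what makes the index characteristic independent): \emph{if a graph $H$ has no induced cycle of length $\ell$ with $4\leq\ell\leq g-1$, then $\widetilde{H}_k(\Delta(H);\mathbb{K})=0$ for every $k\geq 1$ with $|V(H)|\leq g+k-2$.} Granting the lemma, fix $i\leq g-4$ and $j>i+2$ and set $k:=j-i-2\geq 1$. Any $W$ with $|W|=j$ satisfies $|W|=i+k+2\leq g+k-2$, and the induced subgraph $\bar{G}_W$ still has no induced cycle of length between $4$ and $g-1$, so $\widetilde{H}_k(\Delta_W)=0$ by the lemma; by~(\ref{Hochster}) this forces $\beta_{i,j}(I(G))=0$, whence $\Index(I(G))\geq g-3$. (If $g=\infty$ the lemma applies for all $W$ and all $k\geq 1$, so $\beta_{i,j}(I(G))=0$ whenever $j>i+2$, i.e.\ $\Index(I(G))=\infty$, and we recover Fr\"oberg's theorem.)

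To prove the lemma I would induct on $m:=|V(H)|$. If $H$ is chordal, then peeling vertices off along a perfect elimination ordering exhibits $\Delta(H)$ as homotopy equivalent to a finite discrete space --- each step either adds an isolated point or glues a cone over a simplex along that simplex, which is a homotopy equivalence --- so $\widetilde{H}_k(\Delta(H))=0$ for all $k\geq 1$; in particular this settles every case with $m<g$. If $H$ is not chordal, it contains an induced cycle of length $\geq g$, hence $m\geq g$, and together with $m\leq g+k-2$ this forces $k\geq 2$. Now take any vertex $v$ and use the decomposition $\Delta(H)=\Delta(H-v)\cup\mathrm{st}(v)$, where $\mathrm{st}(v)=\{v\}*\Delta(N_H(v))$ is a cone (hence acyclic) and $\Delta(H-v)\cap\mathrm{st}(v)=\Delta(N_H(v))$. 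The Mayer--Vietoris sequence gives the exact piece
\[
\widetilde{H}_k(\Delta(H-v))\longrightarrow \widetilde{H}_k(\Delta(H))\longrightarrow \widetilde{H}_{k-1}(\Delta(N_H(v))).
\]
Here $H-v$ has $m-1<m$ vertices, still has no induced cycle of length between $4$ and $g-1$, and $m-1\leq g+k-2$, so the left-hand term vanishes by induction; and $N_H(v)$ induces a graph on at most $m-1<m$ vertices with $|N_H(v)|\leq m-1\leq g+(k-1)-2$ and $k-1\geq 1$, so the right-hand term vanishes by induction. Hence $\widetilde{H}_k(\Delta(H))=0$, completing the induction and the proof.

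I expect the lemma to be the main obstacle: one must guess an inductive statement that simultaneously controls the homological degree $k$ and the number of vertices, and then check that the Mayer--Vietoris reduction never leaves the admissible range --- the decisive point being that non-chordality forces $m\geq g$ and hence $k\geq 2$, which is exactly what legitimises invoking the inductive hypothesis on $\widetilde{H}_{k-1}$ of the neighbourhood. The chordal base case, by contrast, uses nothing beyond the classical existence of simplicial vertices (a perfect elimination ordering) in chordal graphs.
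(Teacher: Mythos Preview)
The paper does not prove this theorem; it is quoted from \cite{EGHP} as background and used as a tool throughout, so there is no in-paper argument to compare against.

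That said, your proof is correct and self-contained. The upper bound is immediate from Hochster's formula as you describe. For the lower bound, your inductive lemma is well-posed and the Mayer--Vietoris step is sound: the decomposition $\Delta(H)=\Delta(H-v)\cup\mathrm{st}(v)$ with intersection $\mathrm{lk}(v)=\Delta(H[N_H(v)])$ holds for any flag complex, and the numerical bookkeeping $m\leq g+k-2\Rightarrow m-1\leq g+(k-1)-2$ together with $m\geq g\Rightarrow k\geq 2$ keeps both recursive calls inside the inductive hypothesis. The chordal base case is also fine: if $v$ is simplicial then $\mathrm{lk}(v)$ is a full simplex, so the inclusion $\Delta(H-v)\hookrightarrow\Delta(H)$ is a homotopy equivalence, and isolated vertices only affect $\widetilde{H}_0$.

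For context, the original argument in \cite{EGHP} proceeds via a characterisation of the $N_{2,p}$-property in terms of restricted linear syzygies and is more algebraic in flavour; your approach is the combinatorial--topological translation of that result through Hochster's formula, and has the virtue of making the characteristic-independence transparent, since no step uses anything about $\mathbb{K}$ beyond the existence of Mayer--Vietoris.
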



\section{Edge ideals with almost maximal final index}\label{classify}

A graded ideal $I\subset S$ is said to have {\em almost maximal finite index} over $\mathbb{K}$ if $\Index(I)=\projdim(I)-1$. Since, in general, $\projdim(I)$ and $\Index(I)$ depend on the characteristic of the base field,  the property of having almost maximal finite index may also be characteristic dependent.  For example, setting $\Delta$ to be  a triangulation of a real projective plane,  the  Stanley-Reisner ideal of $\Delta$  is generated in degree $3$ and it has almost maximal finite index over all fields of characteristic $2$, while it has a linear resolution over  other fields (cf. \cite[\S 5.3]{BHBook}). However, as we will see in Corollary~\ref{final note}, in the case of quadratic monomial ideals, having almost maximal finite index  is characteristic independent. Note that, although by Theorem~\ref{index of graphs}, the index of an arbitrary edge ideal does not depend on the base field, its projective dimension may depend.  M.~Katzman presents a graph in \cite[Section~4]{Ka}  whose edge ideal has different projective dimensions  over different fields.

	\medspace
In  this section, we  give a classification of  the graphs whose edge ideals have almost maximal finite index.   
We will present this classification in Theorem~\ref{check-out}, but before, we need some intermediate steps which give more insight about the complement of such graphs. 

Unless otherwise stated, throughout this section, $G$ is a simple graph on the vertex set $[n]$ and $\bar{G}$ is its complement, $\Delta$ denotes the independence complex $\Delta(\bar{G})$, and $\partial$, $\partial^W$ denote respectively the differentials of the augmented oriented chain complexes of $\Delta(\bar{G})$, $\Delta(\bar{G})_W$ over a fixed field $\mathbb{K}$. 

\medspace
First, in order to avoid  repetition of some arguments, we gather some facts   which will be used frequently in the sequel in the following Observation. Meanwhile, we also fix some notation.
\begin{remno}\label{connected induced graphs}\hfill\par\rm
	Let $G$ be a simple graph on the vertex set $[n]$ and let $I:=I(G)\subset S$ be its edge ideal.
		
	{\bf (O-1)} The graph $G$ is connected if and only if its flag complex $\Delta(G)$ is connected. On the other hand 	for an arbitrary simplicial complex $\Gamma$ and any field $\mathbb{K}$, $$\dim_\mathbb{K}\widetilde{H}_0(\Gamma;\mathbb{K})=(\text{Number of connected components of }\Gamma)-1,$$  see  \cite[Problem~8.2]{HHBook}. 
		Moreover, 		for any subset $W\subseteq [n]$ one has 	$\Delta(G_W)=\Delta(G)_W$, where $G_W$ is the induced subgraph of $G$ on the vertex set $W$.
		It follows that $G_W$ is connected if and only if $\widetilde{H}_0(\Delta(G)_W;\mathbb{K})=0$. Now if   $\beta_{i,i+2}(I)=0$ for some $i$, then by Hochster's formula 
		$\widetilde{H}_{0}(\Delta_W;\mathbb{K})=0$  and hence $\bar{G}_W$ is connected for all $W\subseteq [n]$ with $|W|=i+2$. 
		
		\medskip
	{\bf (O-2)} Throughout, by $P=u_1-u_2-\cdots-u_r$ in  $G$ we mean a path in $G$ on $r$ distinct vertices with the set of edges $\bigcup_{1\leq i\leq r-1}\{\{u_i,u_{i+1}\}\}$. If, in addition $\{u_1,u_r\}\in E(G)$, then 
		$C=u_1-u_2-\cdots-u_{r}-u_1$ is a cycle  in  $G$.  Then   
		\begin{align}\label{cycle kernel}
		T(C):=(\sum_{1\leq i\leq r-1}[u_i,u_{i+1}])-[u_1,u_{r}]\in \ker \partial^{\Delta(G)}_1,
		\end{align}
		where $\partial^{\Delta(G)}$ denotes the differentials  of the  chain complex of $\Delta(G)$. It is shown in \cite[Theorem~3.2]{Co} that $\widetilde{H}_1(\Delta(G); \mathbb{K})\neq 0$ if and only if there exists a minimal cycle $C$ in $G$ such that $T(C)\notin \im \partial_2^{\Delta(G)}$. Indeed, it is proved that  $\widetilde{H}_1(\Delta(G); \mathbb{K})$ is minimally generated by the nonzero homology classes $T(C)+\im \partial_2^{\Delta(G)}$, where $C$ is a minimal cycle in $G$.
		
		If $C$ is the base of a cone whose apex is  the vertex $u_{r+1}$, then 
		$$T(C)=\partial_{2}^{\Delta(G)}((\sum_{1\leq i\leq r-1}[u_{r+1},u_i,u_{i+1}])-[u_{r+1},u_1,u_{r}])$$
		which implies that $T(C)+\im\partial_2^{\Delta(G)}=0$. Recall that  an $r$-gonal {\em cone} with the apex $a$ is a graph $G'$ with the vertex set $V(G')=V(C)\cup\{a\}$, where $a\notin V(C)$ and   $C$ is an $r$-cycle in $G'$ which is called the base of $G'$,   and    $E(G')=E(C)\cup\{\{a,u_i\}: u_i\in V(C)\}$. 
				
		\medskip
	{\bf (O-3)} Now let $D$ be an $r$-gonal dipyramid in $G$; that is a subgraph of $G$ with the vertex set   $V(D)=V(C)\cup\{a,b\}$ and $E(D)= \bigcup_{1\leq i\leq r}\left(\{a,u_i\}\cup\{b,u_i\}\right) \cup E(C)$ where  $C$ is an $r$-cycle as above which is called the {\em waist } of $D$. Then
			\begin{align}\label{dipyramid kernel}
		T(D):=(\sum_{1\leq i\leq r-1}[a,u_i,u_{i+1}]-[b,u_i,u_{i+1}])-[a,u_1,u_{r}]+[b,u_1,u_{r}]\in \ker \partial_2^{\Delta(G)}.
		\end{align}
	\par	{\bf (O-4)} Suppose $\Index(I)=t$.  By Theorem~\ref{index of graphs},  $\bar{G}$ contains a minimal cycle $C=u_1-u_2-\cdots-u_{t+3}-u_1$ which has the smallest length among all minimal cycles of $\bar{G}$.  
	\begin{itemize}
	\item[$(i)$]  If $\beta_{t+1,t+4}(I)=0$, then $\widetilde{H}_{1}(\Delta_W;\mathbb{K})=0$ for all $W\subseteq [n]$ with $|W|=t+4$.  
		Set $W=\{u_{t+4}\}\cup V(C)$ for an arbitrary vertex $u_{t+4}\in [n]\setminus V(C)$. Then $C$ is a minimal cycle in  $\bar{G}_W$ and   $T(C)\in\ker\partial_1^{W}$ implies that $T(C)\in\im\partial_2^{W}$. 
		 It follows that  each edge $e$ of $C$ is contained in a $2$-face $F_e$ of ${\Delta_W}$. Since $C$ is minimal, we must have $F_e=e\cup\{u_{t+4}\}$ which means that $u_{t+4}$ is adjacent to all vertices of $C$ in $\bar{G}$ and hence $\bar{G}_W$ is a cone.
		\item[$(ii)$] 	 If $\beta_{t+2,t+5}(I)=0$, then $\widetilde{H}_{1}(\Delta_W;\mathbb{K})=0$ for all $W\subseteq [n]$ with $|W|=t+5$. 	Set $W=\{u_{t+4}, u_{t+5}\}\cup V(C)$ for  arbitrary vertices $u_{t+4}, u_{t+5}\in [n]\setminus V(C)$. 
		As in (i), $T(C)=\partial_2^{W}(L)$ for some $L\in \bigoplus_{F\in \Delta_W\atop{\dim F=2}} \mathbb{K}F$, and hence   each edge of $C$ is  contained in a $2$-face of $\Delta_W$. 
		It follows that for each edge $e$ of $C$ either $\{u_{t+4}\}\cup e\in\Delta_W$ or $\{u_{t+5}\}\cup e\in \Delta_W$. If  for all $e\in E(C)$ one has  $\{u_{t+4}\}\cup e\in\Delta_W$, then $\Delta_W$ contains a cone. Same holds if we replace $u_{t+4}$ with $u_{t+5}$.  Suppose $\{u_{t+4}\}\cup e, \{u_{t+5}\}\cup e' \notin\Delta_W$ for some $e, e'\in E(C)$, which implies that $u_{t+4}, u_{t+5}$ are not adjacent to all vertices of $C$ in $\bar{G}$. 
			Without loss of generality suppose $\{u_{t+5},u_1,u_2\}\notin \Delta_W$. It follows that $\{u_{t+4}\}\cup\{u_1,u_2\}\in \Delta_W$. If $\{u_1,u_2\}$ is the only edge $e$ of $C$ with $\{u_{t+4}\}\cup e\in\Delta_W$, then for all $e'\in E(C)$ with $e'\neq \{u_1,u_2\}$ one has $\{u_{t+5}\}\cup e'\in  \Delta_W$. In particular, $\{u_1,u_{t+3}, u_{t+5}\},\{u_2,u_3,u_{t+5}\}\in \Delta_W$ which  implies by the definition of $\Delta_W=\Delta(\bar{G}_W)$ that $\{u_{t+5},u_1,u_2\}\in \Delta_W$, a contradiction. Since 
 $u_{t+4}$ is not adjacent to all vertices of $C$ in $\bar{G}$, and since $\{u_{t+4},u_1\},\{u_{t+4},u_2\}\in E(\bar{G})$, it follows that  there exists  $3\leq j\leq t+3$ such that $\{u_j, u_{t+4}\}\notin E(\bar{G})$. Let $a, b $ be respectively  the biggest and the smallest  integers  with $2\leq a<j<b\leq t+3$ for which $\{u_a,u_{t+4}\},\{u_b,u_{t+4}\}\in E(\bar{G})$. If such $b$ does not exist we let $b=1$ which implies that  $a\neq 2$ because otherwise $\{u_{t+5}\} \cup e' \in \Delta_W$ for all $e' \in E(C) \setminus \{\{u_1, u_2\}\}$, so $u_{t+5}$ is adjacent to all vertices of $C$ in $\bar{G}$. 
		Now  if $b\neq 1$, then $C':=u_{t+4}-u_a-u_{a+1}-\cdots-u_{b}-u_{t+4}$ is a minimal cycle in $\bar{G}_W$ of length  $b-a+2$, and if $b=1$  then $C':=u_{t+4}-u_a-u_{a+1}-\cdots-u_{t+3}-u_{1}-u_{t+4}$ is a minimal cycle of length  $t+6-a$. Since $\Index(I)=t$,  we must have $|C'|\geq t+3$ in either case, and so  $\{a,b\}=\{1,3\}$ if $b=1$, and $\{a,b\}=\{2,t+3\}$ if $b\neq 1$. In  both cases  the vertex $u_{t+4}$ is adjacent to only three successive vertices of $C$ in $\bar{G}$.  Without loss of generality we may assume that $u_1,u_2,u_3$ are these three vertices. Thus $u_{t+4}$ is adjacent to only two edges $\{u_1,u_2\}, \{ u_2,u_3\}$ of $C$ in $\bar{G}$ and hence   $\{u_1,u_3,u_4\ldots, u_{t+3}\}\subseteq N_{\bar{G}}(u_{t+5})$. It follows that $\{u_{t+5}, u_2\}\notin E(\bar{G})$ because  $u_{t+5}$ is not adjacent to all vertices of $C$ in $\bar{G}$. Thus we get the minimal $4$-cycle $C'':=u_{t+5}-u_1-u_2-u_3-u_{t+5}$. It follows that $t=1$ because $\Index(I)=t$. Therefore $|C|=4$ and the only $2$-faces of $\Delta_W$ containing an edge of $C$ are  $\{u_1,u_2,u_5\}, \{u_2,u_3,u_5\}, \{u_3,u_4,u_6\}, \{u_1,u_4,u_6\}$. But no linear combination of theses faces will result in $L$ with $\partial_{2}^{W}(L)=T(C)$. We need more $2$-faces in $\Delta_W$. It follows that $\{u_i, u_5,u_6\}\in \Delta_W$ for some $1\leq i\leq 4$. 
			 In particular, $\{u_5,u_6\}\in E(\bar{G})$. This forms a graph $\bar{G}_W$ which is drawn as the  graph $G_{(d)_2}$ in Figure~\ref{type d}.
			\item[$(iii)$ ]If $\beta_{t+2,t+6}(I)=0$, then $\widetilde{H}_{2}(\Delta(\bar{G})_W;\mathbb{K})=0$ for all $W\subseteq [n]$ with $|W|=t+6$. Suppose $\bar{G}$ contains a dipyramid $D$ with the vertex set $\{u_{t+4}, u_{t+5}\}\cup V(C)$, where the waist $C$  is a minimal cycle of length $t+3$. 
		Set $W=\{u_{t+4}, u_{t+5},u_{t+6}\}\cup V(C)$ for  arbitrary vertex $ u_{t+6}\in [n]\setminus (V(C)\cup \{u_{t+4}, u_{t+5}\})$. Then by (O-3) one has $T(D)\in \ker\partial_2^{W}$ and hence  $T(D)\in \im\partial_3^W$. This implies that  each $2$-face of $D$ is contained in a $3$-face of ${\Delta_W}$. Since $C$ is minimal, it follows that  either $\{u_{t+4},u_{t+5}\}\in E(\bar{G})$ or  $\{u_{t+4}, u_{t+5}\}\cup V(C)\subseteq N_{\bar{G}}(u_{t+6})$. 
			\end{itemize}	
			\end{remno} 
				\begin{ex}\label{mesal}
			Here we give $7$ types of the graphs $G$ whose edge ideal $I:=I(G)$ has almost maximal finite index over all fields. Indeed, we present the complementary graphs  $\bar{G}$ for which $\projdim(I)=\Index(I)+1$. Take $t = 1$ for the cases (c) and (d) below. Since the smallest minimal cycles in the  following graphs  $\bar{G}$ are of length $t+3\geq 4$, by Theorem~\ref{index of graphs} we have $\Index(I)=t$. We show that $\projdim(I)=t+1$. Note that as it is also clear from Hochster's formula, $\beta_{i,j}(I)=0$ for all $j<i+2$ and hence, in order to show that $\projdim(I)=t+1$ it is enough  to prove $\beta_{t+1,j}(I)\neq 0$ for some $j\geq t+3$ and $\beta_{t+2,j}(I)= 0$ for all $t+4\leq j\leq n$. 
			The argument below is independent of the choice of the  base field. 
			
			\medspace
			\indent (a) Let $\bar{G}$ be either of the graphs $G_{(a)_1}, G_{(a)_2}, G_{(a)_3}$ shown in Figure~\ref{type a} with $t\geq 1$. The two  graphs $G_{(a)_1}, G_{(a)_2}$ have one minimal cycle $C=1-2-\cdots-(t+3)-1$, and the graph $G_{(a)_3}$, has two minimal cycles $C$ and  $C'=1-(t+4)-3-4-\cdots-(t+3)-1$.  			
			Setting $W=[t+4]$, we have $T(C)\in \ker \partial_1^W$ by (O-2). Since $t>0$, there are edges of $C$ in all three graphs which are not contained in a $2$-face of $\Delta_W$. In particular,   $T(C)\notin \im\partial_2^W$. Hence $\widetilde{H}_1(\Delta_W;\mathbb{K})\neq 0$ which implies that $\beta_{t+1,t+4}(I)\neq 0$. Thus $\projdim(I)\geq t+1$. If $\beta_{t+2,j}(I)\neq 0$ for some $j$, then there exists $W\subseteq [t+4]$ with $|W|=j$ such that $\widetilde{H}_{|W|-t-4}(\Delta_W;\mathbb{K})\neq 0$. It then follows that $W=[t+4]$ and $\widetilde{H}_0(\Delta_W;\mathbb{K})\neq 0$. But    ${\bar{G}}_W=\bar{G}$ is connected  meaning that $\Delta_W$ is connected, by  (O-1). Hence $\widetilde{H}_0(\Delta_W;\mathbb{K})=0$, a contradiction. Therefore $\projdim(I)=t+1$.
			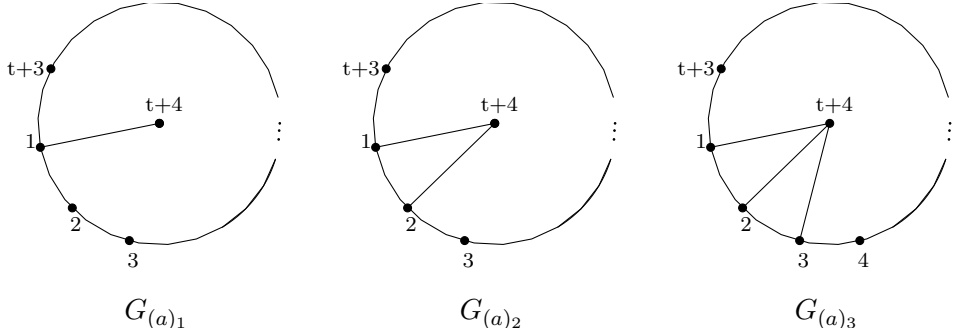
\begin{figure}[ht!]
				\begin{center}
					\hspace*{-2cm}
					\begin{tikzpicture}[line cap=round,line join=round,>=triangle 45,x=0.4cm,y=0.4cm]
					\clip(54.5,3.2) rectangle (68.5,14);
					\draw [color=black] (64.,10.)-- (60.04207935839705,9.202683366710461);
					\draw (59.2,9.98) node[anchor=north west] {\begin{scriptsize}1\end{scriptsize}};
					\draw (62.6,6) node[anchor=north west] {\begin{scriptsize}3\end{scriptsize}};
					\draw (60.7,7.2) node[anchor=north west] {\begin{scriptsize}2\end{scriptsize}};
					\draw (63.2,11.2) node[anchor=north west] {\begin{scriptsize}t+4\end{scriptsize}};
					\draw (58.6,12.4) node[anchor=north west] {\begin{scriptsize}t+3\end{scriptsize}};
					\draw (62.5,4.5) node[anchor=north west] {$G_{(a)_1}$};
					\draw [shift={(64.,10.)},line width=0.4pt]  plot[domain=5.257759132311878:5.982897107046088,variable=\t]({1.*4.037431066773457*cos(\t r)+0.*4.037431066773457*sin(\t r)},{0.*4.037431066773457*cos(\t r)+1.*4.037431066773457*sin(\t r)});
					\draw [shift={(64.,10.)},color=black]  plot[domain=0.21715133778930423:5.982897107046091,variable=\t]({1.*4.037431066773395*cos(\t r)+0.*4.037431066773395*sin(\t r)},{0.*4.037431066773395*cos(\t r)+1.*4.037431066773395*sin(\t r)});
					\begin{scriptsize}
					\draw [fill=black] (64.,10.) circle (1.5pt);
					\draw [fill=black] (64.,10.) circle (1.5pt);
					\draw [fill=black] (60.39320973638185,11.814363142597488) circle (1.5pt);
					\draw [fill=black] (60.04207935839705,9.202683366710461) circle (1.5pt);
					\draw [fill=black] (61.10367540176967,7.187144966296193) circle (1.5pt);
					\draw [fill=black] (68.,10.) circle (0.3pt);
					\draw [fill=black] (67.98,9.76) circle (0.3pt);
					\draw [fill=black] (63,6.1) circle (1.5pt);
					\draw [fill=black] (67.96341651367774,9.465941589917382) circle (0.3pt);
					\end{scriptsize}
					\end{tikzpicture}
					\hspace*{-1.4cm}
					\begin{tikzpicture}[line cap=round,line join=round,>=triangle 45,x=0.4cm,y=0.4cm]
					\clip(54.5,3.2) rectangle (68.5,14);
					\draw [color=black] (64.,10.)-- (60.04207935839705,9.202683366710461);
					\draw [color=black] (64.,10.)-- (61.10367540176967,7.187144966296193);
					\draw (59.2,9.98) node[anchor=north west] {\begin{scriptsize}1\end{scriptsize}};
					\draw (60.7,7.2) node[anchor=north west] {\begin{scriptsize}2\end{scriptsize}};
					\draw (62.6,6) node[anchor=north west] {\begin{scriptsize}3\end{scriptsize}};
					\draw (63.2,11.2) node[anchor=north west] {\begin{scriptsize}t+4\end{scriptsize}};
					\draw (58.6,12.4) node[anchor=north west] {\begin{scriptsize}t+3\end{scriptsize}};
					\draw (62.5,4.5) node[anchor=north west] {$G_{(a)_2}$};
					\draw [shift={(64.,10.)},line width=0.4pt]  plot[domain=5.257759132311878:5.982897107046088,variable=\t]({1.*4.037431066773457*cos(\t r)+0.*4.037431066773457*sin(\t r)},{0.*4.037431066773457*cos(\t r)+1.*4.037431066773457*sin(\t r)});
					\draw [shift={(64.,10.)},color=black]  plot[domain=0.21715133778930423:5.982897107046091,variable=\t]({1.*4.037431066773395*cos(\t r)+0.*4.037431066773395*sin(\t r)},{0.*4.037431066773395*cos(\t r)+1.*4.037431066773395*sin(\t r)});
					\begin{scriptsize}
					\draw [fill=black] (64.,10.) circle (1.5pt);
					\draw [fill=black] (64.,10.) circle (1.5pt);
					\draw [fill=black] (60.39320973638185,11.814363142597488) circle (1.5pt);
					\draw [fill=black] (60.04207935839705,9.202683366710461) circle (1.5pt);
					\draw [fill=black] (61.10367540176967,7.187144966296193) circle (1.5pt);
					\draw [fill=black] (68.,10.) circle (0.3pt);
					\draw [fill=black] (67.98,9.76) circle (0.3pt);
					\draw [fill=black] (63,6.1) circle (1.5pt);
					\draw [fill=black] (67.96341651367774,9.465941589917382) circle (0.3pt);
					\end{scriptsize}
					\end{tikzpicture}
					\hspace*{-1.4cm}
					\begin{tikzpicture}[line cap=round,line join=round,>=triangle 45,x=0.4cm,y=0.4cm]
					\clip(54.5,3.2) rectangle (68.5,14);
					\draw [color=black] (64.,10.)-- (60.04207935839705,9.202683366710461);
					\draw [color=black] (64.,10.)-- (61.10367540176967,7.187144966296193);
					\draw [color=black] (64.,10.)-- (63,6.1);
					\draw (64.6,6) node[anchor=north west] {\begin{scriptsize}4\end{scriptsize}};
					\draw (62.6,6) node[anchor=north west] {\begin{scriptsize}3\end{scriptsize}};
					\draw (59.2,9.98) node[anchor=north west] {\begin{scriptsize}1\end{scriptsize}};
					\draw (60.7,7.2) node[anchor=north west] {\begin{scriptsize}2\end{scriptsize}};
					\draw (63.2,11.2) node[anchor=north west] {\begin{scriptsize}t+4\end{scriptsize}};
					\draw (58.6,12.4) node[anchor=north west] {\begin{scriptsize}t+3\end{scriptsize}};
					\draw (62.5,4.5) node[anchor=north west] {$G_{(a)_3}$};
					\draw [shift={(64.,10.)},line width=0.4pt]  plot[domain=5.257759132311878:5.982897107046088,variable=\t]({1.*4.037431066773457*cos(\t r)+0.*4.037431066773457*sin(\t r)},{0.*4.037431066773457*cos(\t r)+1.*4.037431066773457*sin(\t r)});
					\draw [shift={(64.,10.)},color=black]  plot[domain=0.21715133778930423:5.982897107046091,variable=\t]({1.*4.037431066773395*cos(\t r)+0.*4.037431066773395*sin(\t r)},{0.*4.037431066773395*cos(\t r)+1.*4.037431066773395*sin(\t r)});
					\begin{scriptsize}
					\draw [fill=black] (65,6.1) circle (1.5pt);
					\draw [fill=black] (63,6.1) circle (1.5pt);
					\draw [fill=black] (64.,10.) circle (1.5pt);
					\draw [fill=black] (60.39320973638185,11.814363142597488) circle (1.5pt);
					\draw [fill=black] (60.04207935839705,9.202683366710461) circle (1.5pt);
					\draw [fill=black] (61.10367540176967,7.187144966296193) circle (1.5pt);
					\draw [fill=black] (68.,10.) circle (0.3pt);
					\draw [fill=black] (67.98,9.76) circle (0.3pt);
					\draw [fill=black] (67.96341651367774,9.465941589917382) circle (0.3pt);
					\end{scriptsize}
					\end{tikzpicture}
				\end{center}
\vspace*{-.2cm}				\caption{The  graphs $G_{(a)_i}$}\label{type a}
			\end{figure}
			
			\indent (b) Let $\bar{G}$ be  the graph  $G_{(b)}$  in  Figure~\ref{type b}, where $t\geq 1$ and  $\{i,t+4\}\in E({\bar{G}})$ for all $i\in[t+3]$. Then $\bar{G}$ has one minimal cycle $C=1-2-\cdots-(t+3)-1$ as in (a). Setting $W=[t+3]\cup\{t+5\}$, we have $T(C)\in \ker \partial_1^W\setminus \im \partial_2^W$. It follows that $\beta_{t+1,t+4}(I)\neq 0$. Therefore $\projdim(I)\geq t+1$. For any $W\subseteq [t+5]$ with $|W|=t+4$, $\bar{G}_W$ is connected. So $\beta_{t+2,t+4}(I)=0$. Suppose $W=[t+5]$. Although  $T(C)\in \ker \partial_1^W$ one also has $T(C)\in  \im \partial_2^W$, because $C$ is the base of a cone with apex ${t+4}$. Hence according to (O-2), 			 
			 $\beta_{t+2,t+5}(I)=0$. It follows that $\projdim(I)=t+1$.
			\begin{figure}[ht!]
				\begin{center}
					\hspace*{-1.8cm}
					\begin{tikzpicture}[line cap=round,line join=round,>=triangle 45,x=0.4cm,y=0.4cm]
					\clip(54.5,5.8) rectangle (68.5,14.1);
					\draw [color=black] (64.,10.)-- (60.04207935839705,9.202683366710461);
					\draw [color=black] (64.,10.)-- (61.10367540176967,7.187144966296193);
					\draw (59.2,9.98) node[anchor=north west] {\begin{scriptsize}1\end{scriptsize}};
					\draw (60.7,7.2) node[anchor=north west] {\begin{scriptsize}2\end{scriptsize}};
					\draw (63.2,11.2) node[anchor=north west] {\begin{scriptsize}t+4\end{scriptsize}};
					\draw (58.6,12.4) node[anchor=north west] {\begin{scriptsize}t+3\end{scriptsize}};
					\draw [shift={(64.,10.)},line width=0.4pt]  plot[domain=5.257759132311878:5.982897107046088,variable=\t]({1.*4.037431066773457*cos(\t r)+0.*4.037431066773457*sin(\t r)},{0.*4.037431066773457*cos(\t r)+1.*4.037431066773457*sin(\t r)});
					\draw [color=black] (64.,10.)-- (60.39320973638185,11.814363142597488);
					\draw [color=black] (56.47518012335943,6.228129663627268)-- (60.04207935839705,9.202683366710461);
					\draw [color=black] (56.47518012335943,6.228129663627268)-- (61.10367540176967,7.187144966296193);
					\draw (54.7,6.7) node[anchor=north west] {\begin{scriptsize}t+5\end{scriptsize}};;
					\draw [shift={(64.,10.)},color=black]  plot[domain=0.21715133778930423:5.982897107046091,variable=\t]({1.*4.037431066773395*cos(\t r)+0.*4.037431066773395*sin(\t r)},{0.*4.037431066773395*cos(\t r)+1.*4.037431066773395*sin(\t r)});
					\draw [color=black] (64.,10.)-- (65.63698868515817,9.130168501265073);
					\draw [color=black] (64.,10.)-- (65.66097247720477,10.689114984293646);
					\begin{scriptsize}
					\draw [fill=black] (64.,10.) circle (1.5pt);
					\draw [fill=black] (60.39320973638185,11.814363142597488) circle (1.5pt);
					\draw [fill=black] (60.04207935839705,9.202683366710461) circle (1.5pt);
					\draw [fill=black] (61.10367540176967,7.187144966296193) circle (1.5pt);
					\draw [fill=black] (56.47518012335943,6.228129663627268) circle (1.5pt);
					\draw [fill=black] (68.,10.) circle (0.3pt);
					\draw [fill=black] (67.98,9.76) circle (0.3pt);
					\draw [fill=black] (67.96341651367774,9.465941589917382) circle (0.3pt);
					\draw [fill=black] (65.46910214083202,10.137487767221996) circle (0.3pt);
					\draw [fill=black] (65.47,9.93) circle (0.3pt);
					\draw [fill=black] (65.44511834878543,9.729763302429909) circle (0.3pt);
					\end{scriptsize}
					\end{tikzpicture}
				\end{center}
\vspace*{-.2cm}				\caption{The  graph $G_{(b)}$}\label{type b}
			\end{figure}
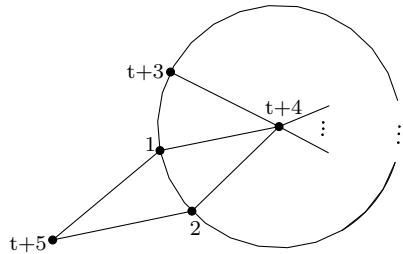

					\indent (c) Let $\bar{G}$ be the graph $G_{(c)}$ shown in Figure~\ref{type c}. This graph consists of $3$ minimal cycles of length $4$. So $\Index(I)=1$. Moreover, $\beta_{2,5}(I)\neq 0$ because
			$T(C)\in \ker \partial_1\setminus \im\partial_2$, for all minimal cycles $C$ in $\bar{G}$, and $\beta_{3,5}(I)=0$ because $\bar{G}$ is connected. Hence  $\projdim(I)=\!2$.
			\begin{figure}[ht!]
				\begin{center}
					\begin{tikzpicture}[line cap=round,line join=round,>=triangle 45,x=0.6cm,y=0.6cm]
					\clip(7.,4) rectangle (12.5,8.1);
					\draw (8.,4.)-- (12.,4.);
					\draw (12.,4.)-- (12.,8.);
					\draw (12.,8.)-- (8.,8.);
					\draw (8.,8.)-- (8.,4.);
					\draw (12.,8.)-- (8.,4.);
					\draw (7.3,4.553017458995247) node[anchor=north west] {\begin{scriptsize}1\end{scriptsize}};
					\draw (12,4.553017458995247) node[anchor=north west] {\begin{scriptsize}2\end{scriptsize}};
					\draw (9.8,6.84034134932995) node[anchor=north west] {\begin{scriptsize}5\end{scriptsize}};
					\draw (7.3,8.3) node[anchor=north west] {\begin{scriptsize}4\end{scriptsize}};
					\draw (12,8.3) node[anchor=north west] {\begin{scriptsize}3\end{scriptsize}};
					\begin{scriptsize}
					\draw [fill=black] (8.,4.) circle (1.5pt);
					\draw [fill=black] (12.,4.) circle (1.5pt);
					\draw [fill=black] (12.,8.) circle (1.5pt);
					\draw [fill=black] (8.,8.) circle (1.5pt);
					\draw [fill=black] (10.,6.) circle (1.5pt);
					\end{scriptsize}
					\end{tikzpicture}
				\end{center}
				\caption{The  graph $G_{(c)}$}\label{type c}
			\end{figure}
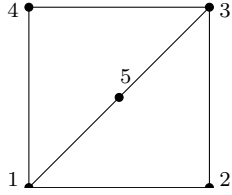
			
			\medskip
			(d) Let $\bar{G}$ be either of the  graphs $G_{(d)_1}, G_{(d)_2}$  in Figure~\ref{type d}. 
			Both graphs have three minimal cycles of length~$4$. Since  $G_{(d)_1}$ is a dipyramid,  
			  by (O-3) one has $\widetilde{H}_2(\Delta(\overline{G_{(d)_1}});\mathbb{K})\neq 0$ which implies that $\beta_{2,6}(I(\overline{G_{(d)_1}}))\neq 0$. Although, $G_{(d)_2}$ is not a dipyramid, it contains the minimal cycle $C=1-2-3-4-1$ which gives a nonzero homology class of $\widetilde{H}_1(\Delta(\overline{G_{(d)_2}})_W;\mathbb{K})\neq 0$, where $W=V(C)\cup\{5\}$. Hence $\beta_{2,5}(I(\overline{G_{(d)_2}}))\neq 0$. 
						Therefore  $\projdim(I)\geq 2$ in both cases. 
			To prove that  $\projdim(I)=2$  it is enough to show that $\beta_{3,5}(I)=\beta_{3,6}(I)=0$.
			
			Considering any subset $W$ of $[6]$ with $|W|=5$, $\bar{G}_W$ and so $\Delta_W$ is connected in both cases. It follows that $\widetilde{H}_0(\Delta_W;\mathbb{K})=0$, and hence $\beta_{3,5}(I)=0$. Now $\widetilde{H}_1(\Delta;\mathbb{K})=0$ because except for the cycle $C=1-2-3-4-1$ in $G_{(d)_2}$, all other minimal cycles in $G_{(d)_1}, G_{(d)_2}$ are bases of some cones and for the cycle $C$, we have 
			$$T(C)=\partial_2^W([1,2,5]+[2,3,5]+[3,4,6]-[1,4,6]-[3,5,6]+ [1,5,6]).$$ 
			Consequently, $\beta_{3,6}(I)=0$ and hence $\projdim(I)=2$.
		
		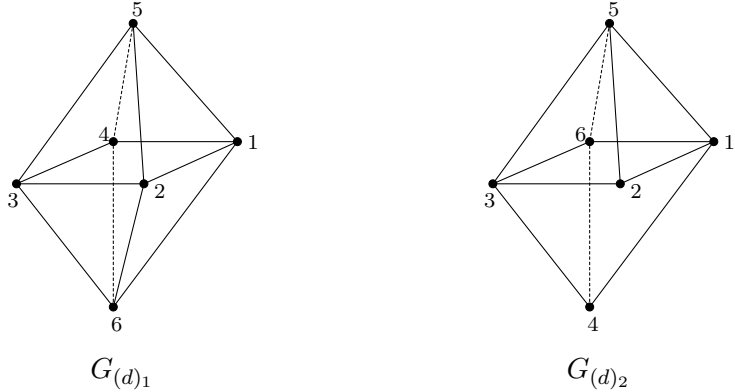
\begin{figure}[ht!]
			\begin{center}
				\hspace*{-.5cm}
				\begin{tikzpicture}[line cap=round,line join=round,>=triangle 45,x=0.55cm,y=0.55cm]
				\clip(5.,0) rectangle (11.5,9.4);
				\draw  (8.,6.)-- (11.,6.);
				\draw (11.,6.)-- (8.74,4.98);
				\draw  (8.74,4.98)-- (5.66,4.98);
				\draw  (5.66,4.98)-- (8.,6.);
				\draw  (8.48,8.86)-- (11.,6.);
				\draw   (8.48,8.86)-- (8.74,4.98);
				\draw   (8.48,8.86)-- (5.66,4.98);
				\draw [dash pattern=on 1pt off 1pt] (8.48,8.86)-- (8.,6.);
				\draw   (8.,2.)-- (8.74,4.98);
				\draw   (8.,2.)-- (5.66,4.98);
				\draw [dash pattern=on 1pt off 1pt] (8.,2.)-- (8.,6.);
				\draw   (8.,2.)-- (11.,6.);
				\draw (5.2,4.99) node[anchor=north west] {\begin{scriptsize}3\end{scriptsize}};
				\draw (8.74,5.22) node[anchor=north west] {\begin{scriptsize}2\end{scriptsize}};
				\draw (11.,6.4) node[anchor=north west] {\begin{scriptsize}1\end{scriptsize}};
				\draw (7.4,6.6) node[anchor=north west] {\begin{scriptsize}4\end{scriptsize}};
				\draw (8.2,9.62) node[anchor=north west] {\begin{scriptsize}5\end{scriptsize}};
				\draw (7.7,2.) node[anchor=north west] {\begin{scriptsize}6\end{scriptsize}};
				\draw (7.2,1) node[anchor=north west] {$G_{(d)_1}$};
				
				\begin{scriptsize}
				\draw [fill=black] (8.,6.) circle (1.5pt);
				\draw [fill=black] (11.,6.) circle (1.5pt);
				\draw [fill=black] (8.74,4.98) circle (1.5pt);
				\draw [fill=black] (5.66,4.98) circle (1.5pt);
				\draw [fill=black] (8.48,8.86) circle (1.5pt);
				\draw [fill=black] (8.,2.) circle (1.5pt);
				\end{scriptsize}
				\end{tikzpicture}
				\hspace{2.5cm}
				\begin{tikzpicture}[line cap=round,line join=round,>=triangle 45,x=0.55cm,y=0.55cm]
				\clip(5.,0) rectangle (11.5,9.5);
				\draw   (8.,6.)-- (11.,6.);
				\draw   (11.,6.)-- (8.74,4.98);
				\draw   (8.74,4.98)-- (5.66,4.98);
				\draw   (5.66,4.98)-- (8.,6.);
				\draw   (8.48,8.86)-- (11.,6.);
				\draw   (8.48,8.86)-- (8.74,4.98);
				\draw   (8.48,8.86)-- (5.66,4.98);
				\draw [dash pattern=on 1pt off 1pt] (8.48,8.86)-- (8.,6.);
				\draw   (8.,2.)-- (5.66,4.98);
				\draw [dash pattern=on 1pt off 1pt] (8.,2.)-- (8.,6.);
				\draw   (8.,2.)-- (11.,6.);
				\draw (5.2,4.99) node[anchor=north west] {\begin{scriptsize}3\end{scriptsize}};
				\draw (8.74,5.22) node[anchor=north west] {\begin{scriptsize}2\end{scriptsize}};
				\draw (11.,6.4) node[anchor=north west] {\begin{scriptsize}1\end{scriptsize}};
				\draw (7.4,6.6) node[anchor=north west] {\begin{scriptsize}6\end{scriptsize}};
				\draw (8.2,9.62) node[anchor=north west] {\begin{scriptsize}5\end{scriptsize}};
				\draw (7.7,2.) node[anchor=north west] {\begin{scriptsize}4\end{scriptsize}};
				\draw (7.2,1) node[anchor=north west] {$G_{(d)_2}$};

				\begin{scriptsize}
				\draw [fill=black] (8.,6.) circle (1.5pt);
				\draw [fill=black] (11.,6.) circle (1.5pt);
				\draw [fill=black] (8.74,4.98) circle (1.5pt);
				\draw [fill=black] (5.66,4.98) circle (1.5pt);
				\draw [fill=black] (8.48,8.86) circle (1.5pt);
				\draw [fill=black] (8.,2.) circle (1.5pt);
				\end{scriptsize}
				\end{tikzpicture}
			\end{center}
		\vspace*{-.3cm}	\caption{The graphs $G_{(d)_i}$}\label{type d}
		\end{figure}
	\end{ex}

		 Next lemma gives more intuition about the length of minimal cycles in $\bar{G}$, when $I(G)$ has almost maximal finite index. For an integer $k$,  we show by  $\overline{k}$ the remainder of $k$ modulo $t+3$, i.e. $\overline{k}\equiv k\pmod{t+3}$ with $0 \leq \overline{k} < t + 3$, where $t\geq 1$ is an integer.

		\begin{lem}\label{sedaghashang}
			Let  $G$ be a simple graph on $[n]$. Assume  $I:=I(G)$ has almost maximal finite index. Then any  minimal cycle in $\bar{G}$ is of length $\Index(I)+3$.
		\end{lem}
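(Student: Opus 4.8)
Write $t=\Index(I)$, so by Theorem~\ref{index of graphs} the smallest minimal cycle of $\bar G$ has length $t+3$, and by hypothesis $\projdim(I)=t+2$. The goal is to rule out the existence of any minimal cycle of length $\geq t+4$ in $\bar G$. Suppose for contradiction that $C=u_1-u_2-\cdots-u_r-u_1$ is a minimal cycle of length $r\geq t+4$. The strategy is to use Hochster's formula together with the vanishing $\beta_{t+3,j}(I)=0$ for all $j$ (which holds since $\projdim(I)=t+2$) to derive a homological constraint on induced subcomplexes $\Delta_W$ with $W\supseteq V(C)$, and then to show that $C$ being minimal forces $T(C)$ (as in (O-2)) to be a nonzero homology class of $\widetilde H_1$ in a subcomplex supported on exactly $r$ or $r+1$ vertices — which, via Hochster, produces a nonzero Betti number $\beta_{i,i+2}(I)$ in homological degree $i\geq t+3$, a contradiction.

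First I would fix $W=V(C)$ itself: since $C$ is a minimal cycle, no edge of $C$ lies in a $2$-face of $\Delta_W=\Delta(\bar G_W)$, so $T(C)\notin\im\partial_2^W$, while $T(C)\in\ker\partial_1^W$ by (O-2). This gives $\widetilde H_1(\Delta_W;\mathbb K)\neq 0$, hence by Hochster's formula (\ref{Hochster}) $\beta_{r-2,\,r}(I)\neq 0$, i.e. $\beta_{r-2,(r-2)+2}(I)\neq 0$. Now since $r\geq t+4$ we have $r-2\geq t+2=\projdim(I)$, so the only possibility is $r-2=t+2$, i.e. $r=t+4$, and $\beta_{t+2,t+4}(I)\neq 0$. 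Thus a minimal cycle of length $>t+3$ must have length exactly $t+4$, and it contributes to the \emph{linear} strand Betti number $\beta_{t+2,t+4}(I)$ — consistent so far, so a second argument is needed to exclude even length $t+4$.

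To finish, I would take a minimal cycle $C$ of length exactly $t+4$ and adjoin one more vertex: set $W=V(C)\cup\{v\}$ for an arbitrary $v\in[n]\setminus V(C)$ (such $v$ exists, else $n=t+4$ and one computes directly). Applying Hochster in homological degree $t+3$: since $\beta_{t+3,j}(I)=0$ for all $j$, in particular $\beta_{t+3,t+5}(I)=0$, so $\widetilde H_1((\Delta_W)_{W'};\mathbb K)=0$ for every $W'\subseteq[n]$ with $|W'|=t+5$; in particular $\widetilde H_1(\Delta_W;\mathbb K)=0$ for our chosen $W$. Hence $T(C)\in\im\partial_2^W$, which as in (O-4)(i) forces each edge of $C$ to lie in a $2$-face of $\Delta_W$, and by minimality of $C$ each such face is $e\cup\{v\}$; thus $v$ is adjacent in $\bar G$ to all vertices of $C$, i.e. $\bar G_W$ is a $(t+4)$-gonal cone with apex $v$. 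Since $v$ was arbitrary, \emph{every} vertex of $[n]\setminus V(C)$ is adjacent to all of $V(C)$ in $\bar G$, so $\bar G$ itself is a cone over $C$ with apex set $[n]\setminus V(C)$ (and these apex vertices form a clique or not — irrelevant). Then by (O-2) (cone case) $T(C)+\im\partial_2^\Delta=0$, and more to the point $C$ is the base of a cone inside $\Delta$, so one checks that $\widetilde H_1(\Delta_{W'};\mathbb K)$ is killed for all $W'$ containing an apex; combined with the absence of any other minimal cycle of length $>t+3$, one shows $\widetilde H_1(\Delta_{W'};\mathbb K)=0$ for \emph{all} $W'$, forcing $\Index(I)\geq 2$ in a way incompatible with $t+4$ being the minimal cycle length unless no such $C$ exists at all — more cleanly, a cone over a cycle of length $t+4$ contains an induced $4$-cycle as soon as $t+4>4$, i.e. $t\geq 1$, giving a minimal cycle of length $4<t+3$, contradicting Theorem~\ref{index of graphs}. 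This contradiction eliminates length $t+4$ as well, and completes the proof.

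The main obstacle I anticipate is the very last step: ruling out length exactly $t+4$. Getting $\bar G_W$ to be a cone is routine, but converting "$\bar G$ is a cone over a $(t+4)$-cycle" into a contradiction requires care — one must make sure the adjacencies forced among the apex vertices and the waist genuinely produce a shorter induced cycle (or a nonvanishing higher Betti number) rather than merely a cone that could in principle have $\Index$ equal to $t+1$. The cleanest route is probably to show that whenever $\bar G$ is a cone with base a cycle $C$ of length $\ell>3$ and at least one apex, then either $\ell=4$ (so $t+4=4$, impossible for $t\geq1$) or $\bar G$ contains an induced $4$-cycle through the apex and two non-adjacent waist vertices, so $\Index(I)=1$ and hence $t=1$, $t+4=5$, and then a direct chain-complex computation on the $5$ waist vertices plus apices shows $\widetilde H_1$ is killed and $\beta_{t+2,t+4}(I)=0$ after all — contradicting what we derived in the second paragraph.
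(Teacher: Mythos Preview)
Your proposal contains systematic off-by-one errors: almost maximal finite index means $\Index(I)=\projdim(I)-1$, so $\projdim(I)=t+1$, not $t+2$; and Hochster's formula yields $\beta_{r-3,r}(I)\neq 0$ from $\widetilde H_1$ on an $r$-vertex set, not $\beta_{r-2,r}$. These two errors happen to cancel, so your reduction to the case $|C'|=t+4$ is correct, as is the conclusion that every $v\in[n]\setminus V(C')$ is adjacent in $\bar G$ to all of $V(C')$.

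The genuine gap is the final step. A single cone over a cycle of length $\ell\geq 5$ (a wheel graph) has \emph{no} induced $4$-cycle: its only induced cycles are triangles through the apex and the base $\ell$-cycle. So ``cone over $C'\Rightarrow$ induced $4$-cycle'' is simply false. To salvage that line you would need two apices $v,w$ with $\{v,w\}\notin E(\bar G)$, which you have not established; and even granting that, the conclusion would only be $t=1$. Your fallback for $t=1$ does not work either: with the correct indexing the nonvanishing Betti number produced in your second paragraph is $\beta_{t+1,t+4}$, which is perfectly compatible with $\projdim(I)=t+1$ and cannot be ``undone'' by adding apices, since the subset $W=V(C')$ already contributes to it regardless of the ambient graph.

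The paper's route is different. It brings in the minimal $(t+3)$-cycle $C$ guaranteed by $\Index(I)=t$, proves via a case analysis (leaning on (O-4)(ii)) that $V(C)\cap V(C')=\emptyset$, and then takes two non-adjacent vertices $u_1,u_3$ of $C$. Since these lie outside $V(C')$, your own Step~2 makes each of them adjacent to every vertex of $C'$; together with $C'$ they form an induced dipyramid $D$. With $W=V(C')\cup\{u_1,u_3\}$ one has $T(D)\in\ker\partial_2^W$ while $\im\partial_3^W=0$, hence $\widetilde H_2(\Delta_W;\mathbb K)\neq 0$ and $\beta_{t+2,t+6}(I)\neq 0$, contradicting $\projdim(I)=t+1$. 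The idea you are missing is to go up one homological degree and exploit $\widetilde H_2$ via the dipyramid, rather than trying to extract a contradiction from $\widetilde H_1$ or from shorter induced cycles.
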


		\begin{proof}
			Let $\Index(I)=t$. Then $\projdim(I)=t+1$ which means that $\beta_{i,j}(I)=0$ for all $i>t+1$ and all $j$.
			Using Theorem~\ref{index of graphs}, there exists a minimal cycle $C$ in $\bar{G}$ of length $t+3$ which has the  smallest length among all the minimal cycles in $\bar{G}$. Let $C=u_1-u_2-\cdots-u_{t+3}-u_1$. Suppose $C'\neq C$ is a minimal cycle in $\bar{G}$ with $C'=v_1-v_2-\cdots-v_l-v_1$. Setting $W=V(C')$  and $T(C')$ as defined in (\ref{cycle kernel}) one has 
					$T(C')\in \ker \partial_1^W$, while $\im\partial_2^W=0$. Hence $\widetilde{H}_{1}(\Delta_W;\mathbb{K})\neq 0$.  Hochster's formula implies that $\beta_{l-3,l}(I)\neq 0$. Since $\beta_{i,j}(I)=0$ for all $i>t+1$, we have  $l\leq t+4$. We claim that $l<t+4$. Since $t+3$ is the smallest length of a minimal cycle in $\bar{G}$, it follows that $l=t+3$, as desired. 
			
			\medskip
			{\em Proof of the claim:} Suppose $l=t+4$ and let $u\in [n]\setminus V(C')$.  Note that such $u$ exists since otherwise $V(C)\subset [n]=V(C')$ which implies that $C'$ is not minimal. Let $W=V(C')\cup\{u\}$. 			
			Since $\beta_{t+2,t+5}(I)= 0$, it follows that  $\widetilde{H}_{1}(\Delta_W;\mathbb{K})= 0$. Therefore, $T(C')\in \ker \partial_1^W$ implies that  $T(C')\in \im \partial_2^W$. Hence,  $\{u, v_i\}\in E(\bar{G})$ for all $1\leq i\leq t+4$.  
				  On the other hand, since $n>t+4$  there exist $v, v'\in [n]\setminus V(C)$ with $v\neq v'$. Setting $W=V(C)\cup\{v,v'\}$, by \textbf{(O-4)}(ii), either of the following cases happens:
				\begin{itemize}
				\item[(i)]  either $\{v,u_i\}\in E(\bar{G})$  for all $u_i\in V(C)$ or $\{v',u_i\}\in E(\bar{G})$ for all $u_i\in V(C)$; 
				\item[(ii)] else, $t=1$ and $\Delta_W$ is isomorphic to the graph $G_{(d)_2}$ in Figure~\ref{type d}. In particular, $\{v,v'\}\in E(\bar{G})$.
				\end{itemize}

		We show that $V(C)\cap V(C')= \emptyset$. Suppose on contrary that  $V(C)\cap V(C')\neq \emptyset$, say  $u_1\in V(C')$. Then $\{u_j, u_1\}\in E(\bar{G})$ for all $u_j\in V(C)\setminus V(C')$. Since $C$ is minimal we conclude that  $V(C)\setminus V(C')\subseteq \{u_2,u_{t+3}\}$. Therefore $\{u_1,u_3,\ldots,u_{t+2}\}\subset V(C')$. 
	 Note that $V(C)\setminus V(C')\neq \emptyset$ because otherwise $V(C)\subset V(C')$ which does not hold.  
		
		If $|V(C)\setminus V(C')|=1$, without loss of generality we may suppose $V(C)\setminus V(C')=\{u_{t+3}\}$. Then since $|V(C')|-|V(C)|=1$, we have $|V(C')\setminus V(C)|=2$.  Let $v_{j_1},v_{j_2}\in V(C')\setminus V(C)$. Then $V(C')=\{v_{j_1},v_{j_2}\}\cup\{u_1,\ldots,u_{t+2}\}$ with $\{v_{j_1},v_{j_2}\}\cap\{u_1,\ldots,u_{t+2}\}=\emptyset$.  Suppose (i) happens for $v_{j_1},v_{j_2}$. We may assume that $\{v_{j_1}, u_i\}\in E(\bar{G})$ for all $1\leq i\leq t+3$. Since $t\geq 1$, $|\{u_1,\ldots,u_{t+2}\}|\geq 3$ which implies that $v_{j_1}$ is adjacent to at least $3$ vertices of $C'$ in $\bar{G}$ which contradicts the minimality of $C'$.  So (i) cannot happen when $|V(C)\setminus V(C')|=1$. Therefore by (ii), $t=1$ and the induced subgraph of $\bar{G}$ on $V(C)\cup\{v_{j_1},v_{j_2}\}$  is isomorphic to $G_{(d)_2}$. Since $V(C')\subset V(C)\cup\{v_{j_1},v_{j_2}\}$, the cycle $C'$ which is of length $5$ is an induced subgraph of  $G_{(d)_2}$. This is  a contradiction because all cycles in $G_{(d)_2}$ are of length $4$. Therefore $|V(C)\setminus V(C')|=2$.

 It follows from $V(C)\setminus V(C')=\{u_2,u_{t+3}\}$ that  $V(C')=\{v_{j_1},v_{j_2}, v_{j_3}\}\cup\{u_1,u_3,\ldots,u_{t+2}\}$ with $\{v_{j_1},v_{j_2}, v_{j_3}\}\cap\{u_1,u_3,\ldots,u_{t+2}\}=\emptyset$. If at least two of the vertices $v_{j_1},v_{j_2},v_{j_3}$, say $v_{j_1},v_{j_2}$, are adjacent to all vertices of $C$ in $\bar{G}$, then $v_{j_1}-u_1-v_{j_2}-u_3-v_{j_1}$ is a $4$-cycle in $C'$ which contradicts the minimality of $C'$, because $|C'|\geq 5$.  Hence at most one vertex from $v_{j_1},v_{j_2},v_{j_3}$ is adjacent to all vertices of $C$ in $\bar{G}$.  If none of them is adjacent to all $u_i$ in $\bar{G}$, by (ii) we have $\{v_{j_1},v_{j_2}\},\{v_{j_2},v_{j_3}\},\{v_{j_1},v_{j_3}\}\in E(\bar{G})$ and hence   $C'$ contains a triangle which is a contradiction. Therefore, exactly one vertex among $v_{j_1},v_{j_2},v_{j_3}$, say $v_{j_1}$,  is adjacent to all vertices $u_i$ in $\bar{G}$.  Now 
 $\{u_1,u_3,\ldots,u_{t+2}\}\subset V(C')$  and minimality of $C'$ imply that   $t=1$ and that $v_{j_1}$ is not adjacent to $v_{j_2},v_{j_3}$. 
 
  Setting $W=\{v_{j_2},v_{j_3}\}\cup V(C)$, since (i) does not happen for this $W$,  one concludes that $\Delta_W$ is isomorphic to $G_{(d)_2}$. Therefore $\{v_{j_2},v_{j_3}\}\in E(\bar{G})$  and, in  $\bar{G}$ the vertex   $v_{j_2}$ is adjacent to  three successive vertices $u_{\overline{i-1}}, u_{i}, u_{\overline{i+1}}$ of $C$,  and the vertex $v_{j_3}$ is adjacent to $u_{\overline{i+1}}, u_{\overline{i+2}}, u_{\overline{i-1}}$, where $1\leq i\leq 4$. Since $\{v_{j_2},v_{j_3}\}\in E(C')$,   and since $V(C')=\{v_{j_1}, v_{j_2},v_{j_3},u_1,u_3\}$, 
  it follows that either $i=1$ or $i=3$, otherwise $C'$ is not minimal. Without loss of generality suppose $i=1$. Thus $v_{j_2}$ is adjacent to $u_1,u_2,u_4$ but  not to $u_3$, and $v_{j_3}$ is adjacent to $u_2,u_3,u_4$ but not to $u_1$. 
  Setting $W=V(C)\cup\{v_{j_1},v_{j_2},v_{j_3}\}$ one has 
 \begin{align*}
T'=(&\sum_{1\leq i\leq 3}[v_{j_1},u_i,u_{i+1}])-[v_{j_1},u_1,u_4]- [v_{j_2},u_1,u_{2}]+[v_{j_2},u_1,u_{4}]-[v_{j_3},u_2,u_{3}]\\
&-[v_{j_3},u_3,u_4]+[v_{j_2},v_{j_3},u_2]-[v_{j_2},v_{j_3},u_4]\in\ker\partial_2^W.
\end{align*}
 Since $\beta_{3,7}(I)=0$, we have  $T'\in  \im \partial_3^W$ which requires that $\Delta_W$ contains faces of dimension $3$ which is not the case here, a contradiction. 
  Consequently, $V(C)\cap V(C')= \emptyset$, as desired.

  	Setting $W=\{u_j\}\cup V(C')$ for some $1\leq j\leq t+3$, since $T(C')\in \ker\partial_1^W$ and $\beta_{t+2,t+5}(I)=0$ we conclude that $u_j$ is adjacent to all vertices of $C'$ in $\bar{G}$. In particular, 
			$\{u_1,v_i\}, \{u_3,v_i\}\in E(\bar{G})$ for all $1\leq i\leq t+4$. Let $W=V(C')\cup\{u_1,u_3\}$. Then $\Delta_W$ consists of an induced dipyramid $D$. 
			Thus  $T(D)\in \ker \partial_2^W$, while $\im \partial_3^W=0$, where $T(D)$ is defined in (\ref{dipyramid kernel}). 
			It follows that $\widetilde{H}_{2}(\Delta_W;\mathbb{K})\neq 0$ and so $\beta_{t+2,t+6}(I)\neq 0$, a contradiction. Therefore $l<t+4$ and the claim follows.
		\end{proof}

		In the next corollary we highlight some information obtained from Observation~\ref{connected induced graphs}  about the vertices not belonging to a minimal cycle.
		\begin{cor}\label{ostad}
			Let  $G$ be a simple graph on $[n]$. Assume   $I:=I(G)$ has almost maximal finite index. Let   $C$ be a minimal cycle in $\bar{G}$. Then 
			
			\begin{itemize}
				\item[(a)] all vertices in $[n]\setminus V(C)$ are  adjacent to some vertex in $V(C)$ in the graph $\bar{G}$.
				\item[(b)] For any pair of vertices $v,v'\in [n]\setminus V(C)$  whenever  $|N_{\bar{G}}(v)\cap V(C)|\leq 2$, then $V(C)\subseteq N_{\bar{G}}(v')$.
				\item[(c)] If $\Index(I)=1$, then there are at most two vertices in $[n]\setminus V(C)$ which are not adjacent to all vertices of ${C}$ in  $\bar{G}$.
				\item[(d)] If $\Index(I)>1$, then  there is at most one vertex in $[n]\setminus V(C)$ which is not adjacent to all vertices of ${C}$ in  $\bar{G}$.
			\end{itemize} 
		\end{cor}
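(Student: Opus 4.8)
\textbf{Proof proposal for Corollary~\ref{ostad}.}

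The plan is to derive each of the four statements by applying Observation~\ref{connected induced graphs} together with Lemma~\ref{sedaghashang} and the vanishing of the relevant Betti numbers $\beta_{i,j}(I)$ for $i>t+1$, where $t=\Index(I)$. Throughout, write $C=u_1-u_2-\cdots-u_{t+3}-u_1$; by Lemma~\ref{sedaghashang} every minimal cycle of $\bar{G}$ has length exactly $t+3$, so $C$ has the smallest length among all minimal cycles.

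For part (a), I would take any $v\in[n]\setminus V(C)$ and set $W=\{v\}\cup V(C)$, so $|W|=t+4$. Since $\projdim(I)=t+1$ forces $\beta_{t+2,t+4}(I)=0$, Hochster's formula gives $\widetilde{H}_0(\Delta_W;\mathbb{K})=0$, so by (O-1) the graph $\bar{G}_W$ is connected; in particular $v$ is joined to some vertex of $C$ in $\bar{G}$. For part (b), the statement is essentially the dichotomy recorded in (O-4)(ii): given $v,v'\in[n]\setminus V(C)$, set $W=\{v,v'\}\cup V(C)$, so $|W|=t+5$ and $\beta_{t+2,t+5}(I)=0$, hence $\widetilde{H}_1(\Delta_W;\mathbb{K})=0$. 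Then (O-4)(ii) says that either one of $v,v'$ is adjacent to all of $V(C)$, or $t=1$ and $\bar{G}_W\cong G_{(d)_2}$. In the latter case each of $v,v'$ is adjacent to exactly three vertices of $C$ (here $|V(C)|=4$), so the hypothesis $|N_{\bar G}(v)\cap V(C)|\le 2$ rules it out; thus the first alternative holds, and if $|N_{\bar G}(v)\cap V(C)|\le 2$ then it must be $v'$ that dominates $V(C)$, giving $V(C)\subseteq N_{\bar G}(v')$.

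For parts (c) and (d), I would argue by contradiction using (b). Suppose first $\Index(I)=t=1$ and there were three distinct vertices $v_1,v_2,v_3\in[n]\setminus V(C)$ none of which is adjacent to all of $V(C)$; since $|V(C)|=4$, each $v_k$ has $|N_{\bar G}(v_k)\cap V(C)|\le 3$, and I need the sharper observation that by (O-4)(ii) applied to each pair, any such non-dominating vertex $v_k$ in fact satisfies $|N_{\bar G}(v_k)\cap V(C)|\le 3$ with the $G_{(d)_2}$-configuration forcing exactly $3$; but then applying (b) to a pair $(v_j,v_k)$ in which $|N_{\bar G}(v_j)\cap V(C)|\le 2$ would force the other to dominate — so I should instead note that if some $v_k$ has $\le 2$ neighbours in $C$, then by (b) every other $v_\ell$ dominates $V(C)$, contradicting that there are three non-dominating vertices; hence all three have exactly $3$ neighbours in $C$, and then a short analysis of which triple of vertices of the $4$-cycle each is attached to, combined with the structure of $G_{(d)_2}$ pinned down in (O-4)(ii) for each pair, produces an incompatible configuration (or a forbidden short cycle / a nonzero $\widetilde H_2$ contradicting $\beta_{3,6}(I)=0$). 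For part (d), with $t>1$ we have $|V(C)|=t+3\ge 5$; if there were two distinct non-dominating vertices $v,v'$ then at least one of them, say $v$, must have $|N_{\bar G}(v)\cap V(C)|\le 2$ — because the $G_{(d)_2}$-alternative in (O-4)(ii) only occurs when $t=1$, so for $t>1$ the dichotomy in (b) forces one of any two vertices to dominate $V(C)$ — and then (b) forces $v'$ to dominate $V(C)$, a contradiction. Hence at most one non-dominating vertex exists.

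The main obstacle will be part (c): unlike (d), the exceptional configuration $G_{(d)_2}$ genuinely arises when $t=1$, so I cannot simply invoke the clean dichotomy of (b). I expect to need a careful combinatorial case check on how three vertices outside a $4$-cycle can each attach to exactly three of its four vertices, using the constraint from (O-4)(ii) that every \emph{pair} of them induces either a dominated configuration or exactly $G_{(d)_2}$, and ruling out the surviving cases either by exhibiting a minimal cycle of length $<4$ (impossible) or by producing a cycle $T(D)$ of a dipyramid giving $\beta_{3,6}(I)\ne0$, contradicting $\projdim(I)=2$. The other three parts are short deductions from the stated observations.
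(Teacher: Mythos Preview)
Your approach to (a), (b), and (d) matches the paper's. For (d) in particular you take an unnecessary detour: the sentence ``the $G_{(d)_2}$-alternative in (O-4)(ii) only occurs when $t=1$, so for $t>1$ the dichotomy forces one of any two vertices to dominate $V(C)$'' is already the whole proof --- the paper simply cites (O-4)(ii) to conclude $t=1$, contradicting $t>1$. The claim that one of $v,v'$ must have $\le 2$ neighbours is neither true in general nor needed.

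For (c) you have the right set-up but overestimate the difficulty. The paper does \emph{not} use any dipyramid or $\widetilde H_2$ argument. It observes that the $G_{(d)_2}$ structure in (O-4)(ii) is rigid: whenever two non-dominating outside vertices coexist, each misses exactly one vertex of the $4$-cycle $C$, and those missed vertices are \emph{opposite} on $C$ (say $u_5$ misses $u_4$ and $u_6$ misses $u_2$). Now a hypothetical third non-dominating vertex $u_7$, paired with $u_5$, must miss $u_2$; paired with $u_6$, it must miss $u_4$ and hence be adjacent to $u_2$. This contradiction finishes (c) in two lines. Your plan would eventually reach the same contradiction through the ``incompatible configuration'' branch of your case analysis, but you should recognise that the pairwise rigidity of $G_{(d)_2}$ pins down the missing vertex of each outside vertex uniquely, so no further combinatorics or homology is required.
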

		\begin{proof}
			Let $\Index(I)=t$. By assumption $\projdim(I)=t+1$. By Lemma~\ref{sedaghashang} all minimal cycles of $\bar{G}$ are of length $t+3$. Let $C=u_1-u_2-\cdots-u_{t+3}-u_1$ be a minimal cycle of $\bar{G}$.  
			
			(a) Let $u_{t+4}\in [n]\setminus V(C)$, and set $W=V(C)\cup \{u_{t+4}\}$. Since $\beta_{t+2,t+4}(I)=0$ we conclude that $\bar{G}_W$ is connected using (O-1). It follows that $u_{t+4}$ is adjacent to some vertex of $C$ in the graph $\bar{G}$. 
			
			\medspace
			(b) 	If $|[n]\setminus V(C)|\leq 1$, then there is nothing to prove. Suppose  $u_{t+4},u_{t+5}\in [n]\setminus V(C)$. Set $W= V(C)\cup\{u_{t+4},u_{t+5}\}$.  Since $\beta_{t+2,t+5}(I)=0$, (O-4)(ii) implies that for each edge $e$ of $C$ we either have $e\cup\{u_{t+4}\}\in \Delta_W$ or  $e\cup\{u_{t+5}\}\in \Delta_W$. This in particular shows that if $u_{t+4}$ is adjacent to at most $2$  vertices of $C$ in $\bar{G}$, then $u_{t+5}$ is adjacent to all of them in $\bar{G}$. 
		
		\medspace	
			(c) Suppose $u_{5},u_{6}\in [n]\setminus V(C)$ are not adjacent to all vertices of $C$ in $\bar{G}$. The argument in (O-4)(ii) shows that we may assume that $\{u_1,u_2,u_3\}\subseteq N_{\bar{G}}(u_5)$ but $u_4\notin N_{\bar{G}}(u_5)$ and $\{u_1,u_3,u_4\}\subseteq N_{\bar{G}}(u_6)$ but $u_2\notin N_{\bar{G}}(u_6)$. Now suppose $u_7\in [n]\setminus V(C)$ is not adjacent to all vertices of $C$ in $\bar{G}$. By replacing $u_6$ with $u_7$ in (O-4)(ii) one sees that $u_7$ is not adjacent to $u_2$ in $\bar{G}$, and replacing $u_5$ with $u_7$ in the same argument shows that $u_7$ is adjacent to $u_2$ in $\bar{G}$, a contradiction. 
				
			\medspace
			(d) Suppose $u_{t+4},u_{t+5}$ are two vertices in $[n]\setminus V(C)$ which are not adjacent to all vertices of $C$ in $\bar{G}$. The argument in (O-4)(ii) shows that $t=1$, a contradiction. 
		\end{proof}	
The crucial point in the classification of the edge ideals with almost maximal finite index is to determine the number of the vertices of the graph with respect to the index of the ideal.  In the following, we compute this number.

\begin{prop}\label{number of vertices}
Let $G$ be a simple graph on $[n]$ with no isolated vertex such that $I=I(G)$ has almost maximal finite index $t$.   Then $G$ has either $n=t+4$ or $n=t+5$ vertices.
\end{prop}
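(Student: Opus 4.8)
The plan is to establish the two inequalities $t+4\le n$ and $n\le t+5$ separately, using a minimal cycle of $\bar G$ as a skeleton. Write $t=\Index(I)$, so that $\projdim(I)=t+1$. By Theorem~\ref{index of graphs} the complement $\bar G$ contains a minimal cycle, and by Lemma~\ref{sedaghashang} every minimal cycle of $\bar G$ has length exactly $t+3$; fix one, say $C=u_1-u_2-\cdots-u_{t+3}-u_1$. The first move is to rephrase the bound $\projdim(I)\le t+1$ through Hochster's formula~(\ref{Hochster}): it is equivalent to saying that $\widetilde H_k(\Delta_W;\mathbb K)=0$ for every $W\subseteq[n]$ and every $k$ with $|W|\ge k+t+4$. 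Taking $k=0$ this says that \emph{every induced subgraph of $\bar G$ on at least $t+4$ vertices is connected}, and taking $k=2$ it rules out certain dipyramid configurations on $\ge t+6$ vertices; the case $k=0$ will be the main engine of the contradictions below.

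For the lower bound: if $n=t+3$ then $V(C)=[n]$, and since $C$ is an induced cycle we get $\bar G=C$, a cycle of length $>3$; by \cite[Theorem~4.1]{BHZ} this forces $I$ to have maximal finite index, i.e. $\Index(I)=\projdim(I)$, contradicting the hypothesis. As $n\ge|V(C)|=t+3$, this gives $n\ge t+4$.

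For the upper bound, I would assume $n\ge t+6$ and derive a contradiction. Partition $[n]\setminus V(C)=A\sqcup B$, where $A$ consists of the vertices adjacent in $\bar G$ to all of $V(C)$ and $B$ is the rest; by Corollary~\ref{ostad}(c),(d) one has $|B|\le 2$ if $t=1$ and $|B|\le 1$ if $t\ge2$, so $|A|\ge1$, and $|A|\ge2$ when $t\ge2$. The hypothesis that $G$ has no isolated vertex enters precisely here: a vertex $v\in A$ satisfies $N_{\bar G}(v)\supseteq V(C)$, hence $N_G(v)\subseteq(A\cup B)\setminus\{v\}$, so $v$ must have a $\bar G$-non-neighbour inside $A\cup B$; equivalently, $\bar G$ has no vertex adjacent to all of the others. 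Combining this with Lemma~\ref{sedaghashang} and Corollary~\ref{ostad}(b) one pins down the adjacency pattern. Concretely, for $t\ge2$ one checks that $A$ is a clique of $\bar G$ (otherwise two non-adjacent vertices of $A$ together with two non-consecutive vertices of $C$ would span an induced $4$-cycle, impossible since $t+3\ge5$), that $B\ne\emptyset$ (otherwise every $v\in A$ would be adjacent to all other vertices, hence isolated in $G$), so $B=\{b\}$ with $b$ non-adjacent in $\bar G$ to every vertex of $A$, and that $S:=N_{\bar G}(b)\cap V(C)$ is a nonempty proper set of pairwise-consecutive vertices of $C$, whence $|S|\le2$. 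Then $W:=[n]\setminus S$ has $|W|=n-|S|\ge t+4$, and $\bar G_W$ is disconnected: $b$ becomes isolated, while the rest is the join of the clique $A$ with the nonempty path-union $V(C)\setminus S$, hence connected. This contradicts the case $k=0$ of the reformulation, so $n\le t+5$ when $t\ge2$.

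The main obstacle is the case $t=1$, where $C=C_4$: now $B$ may have two vertices, and $A$ need not be a clique because two vertices of $A$ can serve as a diagonal of a fresh minimal $4$-cycle through $V(C)$ — this is exactly what happens for the dipyramid $G_{(d)_1}$ in Figure~\ref{type d}. Here I would run a finer case analysis on $|B|$ and on the adjacencies inside $A$, invoking Observation~\ref{connected induced graphs}(O-4)(iii) for non-adjacent pairs of $A$, Corollary~\ref{ostad}(b),(c), and Lemma~\ref{sedaghashang}. The expected upshot is that once $|A|+|B|\ge3$ one of two things must occur: either some vertex of $A$ becomes adjacent in $\bar G$ to all the other vertices (hence is isolated in $G$, a contradiction), or $\bar G$ decomposes as a join carrying two induced $4$-cycles, so that $\widetilde H_k(\Delta_W;\mathbb K)\ne0$ for some $W$ with $|W|\ge k+t+4$, again contradicting $\projdim(I)=t+1$. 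In every branch one is forced into a contradiction, so $n\le t+5$ also for $t=1$, and together with the lower bound this yields $n\in\{t+4,\,t+5\}$.
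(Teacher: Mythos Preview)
Your lower bound and the case $t\ge 2$ of the upper bound are correct and close in spirit to the paper's argument; in fact your $A/B$ partition packages the same disconnection idea (nonvanishing of $\widetilde H_0$ on a set of size $\ge t+4$) somewhat more cleanly than the paper does. The paper, instead of bounding $|S|$, picks two elements $u_{t+4},u_{t+5}\in A$, shows they are adjacent in $\bar G$, then uses the no-isolated-vertex hypothesis to produce a common $\bar G$-non-neighbour $v_1\in B$ and removes $N_{\bar G}(v_1)\cap V(C)$; your formulation makes the structure of $B$ and of $N_{\bar G}(b)\cap V(C)$ explicit before disconnecting.

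The genuine gap is the case $t=1$, which you only sketch. Your proposed dichotomy (``either some $v\in A$ is universal in $\bar G$, or $\bar G$ is a join carrying two induced $4$-cycles'') is not precise enough to do the work, and the paper's proof shows why: the obstruction does not always live in $\widetilde H_0$ or $\widetilde H_1$. When $|A|\ge 2$ contains a non-adjacent pair one obtains an induced dipyramid $D$ on $V(C)\cup\{v_1,v_2\}$, and the contradiction comes from exhibiting an explicit $2$-cycle $T(D)\in\ker\partial_2^W\setminus\im\partial_3^W$ (forcing $\beta_{3,7}(I)\neq 0$) and, in a further subcase, an explicit $3$-cycle $T\in\ker\partial_3^W\setminus\im\partial_4^W$ on eight vertices (forcing $\beta_{3,8}(I)\neq 0$). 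These higher-degree chain computations are exactly what (O-4)(iii) sets up, but invoking (O-4)(iii) alone is not enough: one must track which vertices are adjacent to all of $V(C)\cup\{v_1,v_2\}$, iterate to a fourth extra vertex, and then write down the cycle by hand. Your sketch does not supply any of this, and the ``join'' description does not match what actually occurs (for instance, $G_{(d)_1}$ is not a join). To complete the proof you need to carry out this chain-level analysis for $t=1$ rather than relying on connectivity alone.
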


\begin{proof}
Since $\Index(I)=t$ there is a minimal cycle $C=u_1-u_2-\cdots-u_{t+3}-u_1$ in $\bar{G}$. Moreover, $\bar{G}\neq C$, because otherwise $\projdim(I)=\Index(I)$ by \cite[Theorem~4.1]{BHZ}. Since $C$ is a minimal cycle, $\bar{G}\neq C$ means that  there exists $v\in [n]\setminus V(C)$. Therefore $n\geq t+4$. 

Suppose on contrary that $n> t+5$. So $n-|V(C)|>2$. 

Suppose first  $t>1$. It follows from Corollary~\ref{ostad}(d) that there exist  $u_{t+4},u_{t+5}\in [n]\setminus V(C)$ such that  $u_{t+4},u_{t+5}$ are adjacent to all vertices of $C$ in $\bar{G}$. Therefore  $C':=u_{t+4}-u_1-u_{t+5}-u_3-u_{t+4}$ is a $4$-cycle. Since $t>1$, $C'$ is not minimal and hence $\{u_{t+4},u_{t+5}\}\in E(\bar{G})$. 

\medspace
 Since $u_{t+4}, u_{t+5}$ are not isolated in $G$, there exist $v_1, v_2\in  [n]\setminus (V(C)\cup \{ u_{t+4}, u_{t+5}\})$ such that $\{v_1, u_{t+4}\}, \{v_2,u_{t+5}\}\notin E(\bar{G})$. By Corollary~\ref{ostad}(a), $v_1, v_2$ are adjacent to some vertices  of $C$ in $\bar{G}$.  If $v_1$ is adjacent to at least two vertices in $\bar{G}$, say $u_a, u_b\in V(C)$  such that   $b\neq \overline{a+1}$ and $a\neq \overline{b+1}$, then we will have a minimal $4$-cycle $v_1-u_a-u_{t+4}-u_b-v_1$ which contradicts $t>1$. Thus $v_1$ is adjacent to either one vertex $u_a$  or  two vertices $u_a, u_{\overline{a+1}}$ of $C$ in $\bar{G}$. In particular, $v_1$ is not adjacent to all vertices of $C$ in $\bar{G}$. Same holds for $v_2$. Corollary~\ref{ostad}(d) implies that $v_1=v_2$.  
  If $v_1$ is adjacent to only one vertex $u_a$ of $C$ in $\bar{G}$, setting $W=\{u_{t+4},v_1\}\cup V(C)\setminus \{u_a\}$, $\Delta_W$ is not connected and so $\beta_{t+2,t+4}(I)\neq 0$, a contradiction.  Therefore, $v_1$ is adjacent to $u_a, u_{\overline{a+1}}$ in $\bar{G}$. Setting $W=\{u_{t+4},u_{t+5},v_1\}\cup V(C)\setminus \{u_a, u_{\overline{a+1}}\}$, $\Delta_W$ is not connected and so $\beta_{t+2,t+4}(I)\neq 0$, a contradiction. Consequently,  $n\leq t+5$ when $t>1$.

\medspace
Now suppose $t=1$. Since $n-|V(C)|>2$ we have $n\geq 7$. By Corollary~\ref{ostad}(c), at least one vertex, say $v_1$ in $[n]\setminus V(C)$ is adjacent to all vertices of $C$ in $\bar{G}$.  Since $v_1$ is not isolated in $G$, there exists $v_2\in [n]\setminus(V(C)\cup \{v_1\})$ such that $\{v_1,v_2\}\notin E(\bar{G})$. We claim that $v_2$ is not adjacent to some vertex of $C$ in $\bar{G}$. 

\medspace
{\em Proof of the claim:} Suppose on contrary that $v_2$ is adjacent to all vertices of $C$ in $\bar{G}$. Then we get an induced  dipyramid $D$ on the vertex set $V(C)\cup\{v_1,v_2\}$. Now set $W=V(C)\cup \{v_1,v_2,v_3\}$ for some $v_3\in [n]\setminus(V(C)\cup \{v_1,v_2\})$. Since $\beta_{3,7}(I)=0$ we have $T(D)\in \im\partial_{3}^W$, with $T(D)$ similar to the one in (\ref{dipyramid kernel}), which implies that each $2$-face of $D$ is contained in a $3$-face of $\Delta_W$ and hence $v_3$ is adjacent to all vertices of $V(C)\cup \{v_1,v_2\}$ in $\bar{G}$.  As $v_3$ is not isolated in $G$, there exists $v_4\in [n]\setminus(V(C)\cup \{v_1,v_2,v_3\})$ such that $\{v_3,v_4\}\notin E(\bar{G})$. Replacing $v_3$ with $v_4$ in the above argument  we conclude that $v_4$ is also adjacent to all vertices of $V(C)\cup \{v_1,v_2\}$ in $\bar{G}$. 
Now
set $W=\{v_1,v_2,v_3,v_4\}\cup V(C)$. Then  
\vspace*{-.55cm}
\begin{align*}
T=&\sum_{1\leq i\leq 3}\left([v_1,v_3,u_i,u_{i+1}]-[v_1,v_4,u_i,u_{i+1}]-[v_2,v_3,u_i,u_{i+1}]+[v_2,v_4,u_i,u_{i+1}]\right)\\
&-[v_1,v_3,u_1,u_4]+[v_1,v_4,u_1,u_4]+[v_2,v_3,u_1,u_4]-[v_2,v_4,u_1,u_4]\in\ker\partial_3^W
\end{align*}
while $T\notin\im\partial_{4}^W$, because $\Delta_W$ contains no $4$-face.  
This implies that $\beta_{3,8}(I)\neq 0$ which is a contradiction. So the claim follows.
\vspace{0cm}

Without loss of generality suppose  $\{v_2,u_4\}\notin E(\bar{G})$. Now consider $v'_3\in [n]\setminus (V(C)\cup\{v_1,v_2\})$. We  show that  $v'_3$ is   adjacent to all vertices of $C$ in $\bar{G}$. Otherwise, setting $W=\{v_2,v'_3\}\cup V(C)$ the same discussion as in (O-4)(ii) shows that $\bar{G}_W$ is isomorphic to the graph $G_{(d)_2}$ in Figure~\ref{type d}, where $\{v'_3,u_2\}\notin \bar{G}_W$. Hence  setting $W=\{v_1,v_2,v'_3\}\cup V(C)$, we have
\begin{align*}
T'=(&\sum_{1\leq i\leq 3}[v_1,u_i,u_{i+1}])-[v_1,u_1,u_4]- [v_2,u_1,u_{2}]-[v_2,u_2,u_{3}]-[v'_3,u_3,u_{4}]\\
&+[v'_3,u_1,u_4]-[v_2,v'_3,u_1]+[v_2,v'_3,u_3]\in\ker\partial_2^W,
\end{align*}
while $T'\notin \im\partial^W_3$ because $\{v_2,u_1,u_2\}$ is not contained in a $3$-face of $\Delta_W$, and we get a contradiction. Thus 
 $v'_3$ is adjacent to all vertices of $C$ in $\bar{G}$. It follows that  setting $W=\{v_1,v_2,v'_3\}\cup V(C)$, a dipyramid $D$ with the vertex set $V(C)\cup \{v_1,v'_3\}$ lies in $\Delta_W$ and so $T(D)\in \ker\partial_{2}^W$ which implies that $T(D)\in \im\partial_{3}^W$.  Thus each $2$-face of $D$ is contained in a $3$-face of $\Delta_W$. Since $v_2$ is not adjacent to $u_4$ in $\bar{G}$, we conclude that $\{v_1,v'_3\}\in E(\bar{G})$. 
Note that by $\beta_{3,5}(I)= 0$, setting $W=V(C)\cup \{v_2\}$, the vertex $v_2$ is adjacent to some vertex $u_i$ of $V(C)$ in $\bar{G}$. Now setting $W=\{v_1,v_2\}\cup V(C)\setminus\{u_i\}$, the same reason implies that $v_2$ is adjacent to some vertex $u_j$ in $V(C)\setminus\{u_i\}$ in the graph $\bar{G}$.  Finally, setting $W=\{v_1,v_2, v'_3\}\cup V(C)\setminus \{u_i,u_j\}$ indicates that in the graph $\bar{G}$ the vertex $v_2$ is adjacent to either three vertices $u_i,u_j,u_k$ of $C$ or it is adjacent to the two vertices $u_i,u_j$ of $C$ and to $v'_3$.  We show that  in the first case $v_2$ is also adjacent to $v'_3$ in $\bar{G}$. Suppose the first case happens. Since $\{v_2,u_4\}\notin E(\bar{G})$, setting $W=\{v_1,v_2,v'_3,u_1,u_3,u_4\}$ we have a minimal cycle $C':=v_2-u_3-u_4-u_1-v_2$ in $\bar{G}_W$ with $T(C')\in \ker\partial_1^W$. Since $\beta_{3,6}(I)=0$, any edge of $C'$ must be contained in a $2$-face of $\Delta_W$ and since $\{v_1,v_2\}\notin E(\bar{G})$ it follows that $v_2$ is adjacent to $v'_3$ in $\bar{G}$. 

Now since $v'_3$ is not isolated in $G$, there exists $v'_4\in [n]\setminus (V(C)\cup \{v_1,v_2,v'_3\})$ with $\{v'_3,v'_4\}\notin E(\bar{G})$. Replacing $v'_3$ with $v'_4$ in the above discussion, we see that $v'_4$ is adjacent to all vertices of $C$ in $\bar{G}$. Setting $W=V(C)\cup \{v_2,v'_3,v'_4\}$, we have an induced dipyramid $D$ on the vertex set $V(C)\cup\{v'_3,v'_4\}$ with $T(D)\in \ker\partial^W_{2}$ and since $\{v_2,u_4\}\notin \bar{G}_W$ we have  $T(D)\notin \im\partial^W_{3}$ that is a contradiction with $\beta_{3,7}(I)=0$. 
  Thus $n\leq t+5$ when $t=1$.
\end{proof}

Now we are ready to state the main result of this section which determines the graphs whose edge ideals have almost maximal finite index.
\begin{thm}\label{check-out}
Let $G$ be a simple graph on $[n]$ with no isolated vertex and let $I=I(G)\subset\!S$. Then $I$ has almost maximal finite index if and only if $\bar{G}$ is isomorphic to one of the graphs given in Example~\ref{mesal}.
\end{thm}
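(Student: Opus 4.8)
The plan is to prove the two implications separately. The "if" direction is essentially done already: Example~\ref{mesal} verifies explicitly, and independently of the base field, that each of the seven listed graphs $\bar G$ yields an edge ideal $I=I(G)$ with $\projdim(I)=\Index(I)+1$, i.e.\ with almost maximal finite index. So the substance is the "only if" direction, and the strategy is to combine the structural constraints already extracted in this section: Lemma~\ref{sedaghashang} forces every minimal cycle of $\bar G$ to have the same length $t+3$ where $t=\Index(I)$; Proposition~\ref{number of vertices} forces $n\in\{t+4,\,t+5\}$; and Corollary~\ref{ostad} controls how the vertices outside a fixed minimal cycle $C=u_1-\cdots-u_{t+3}-u_1$ attach to $C$. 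One should fix such a $C$ (which exists by Theorem~\ref{index of graphs} since $\Index(I)=t$) and argue by a case analysis on $n-|V(C)|\in\{1,2\}$ and on whether $t=1$ or $t>1$.

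First I would handle $n=t+4$. Here there is a single extra vertex $w\notin V(C)$. By Corollary~\ref{ostad}(a), $w$ is adjacent in $\bar G$ to at least one vertex of $C$. If $w$ is adjacent to \emph{all} of $V(C)$ in $\bar G$, then $\bar G$ is a cone over $C$, which is exactly $G_{(b)}$ restricted to $t+4$ vertices --- but more carefully, $G$ has no isolated vertex, so one must check this forces precisely $G_{(b)}$ (with the apex vertex being $t+4$) or, if $w$ misses some vertices, the graphs $G_{(a)_1},G_{(a)_2},G_{(a)_3}$. The key local tool is (O-4)(i): since $\beta_{t+1,t+4}(I)\neq0$ would be fine but $\beta_{t+2,\ast}(I)=0$, one uses connectivity of all induced subgraphs on $t+3$ and $t+4$ vertices (O-1) to pin down exactly which edges $\{w,u_i\}$ may be absent. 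Running through the possibilities for $N_{\bar G}(w)\cap V(C)$ --- it must be a set of consecutive vertices of $C$, of size $1$, $2$, or $3$ (by minimality of $C$ and the cycle-length constraint of Lemma~\ref{sedaghashang}) --- yields precisely the graphs $G_{(a)_1}$ (size one neighborhood forces an isolated vertex unless... actually size giving the degree-$1$ pendant picture), $G_{(a)_2}$, $G_{(a)_3}$, and $G_{(b)}$ when $w$ is adjacent to everything but then $n=t+4$ with $w$ of full degree is the $t+4$-vertex case of $G_{(b)}$. One reconciles the figure labels with each subcase.

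Then I would handle $n=t+5$, with two extra vertices $w_1,w_2\notin V(C)$. Corollary~\ref{ostad}(c),(d) says: if $t>1$ at most one of $w_1,w_2$ fails to be adjacent to all of $V(C)$, and if $t=1$ at most two do. Split further: (i) both $w_i$ adjacent to all of $C$ --- then one has a dipyramid on $V(C)\cup\{w_1,w_2\}$, and (O-3) plus $\beta_{t+2,t+6}(I)=0$ (from $\projdim(I)=t+1$) forces $\{w_1,w_2\}\in E(\bar G)$ unless the apex condition fails; analyzing degrees and the no-isolated-vertex hypothesis in $G$, and using that $w_1,w_2$ must then be nonadjacent in $\bar G$ to something, narrows this to $G_{(b)}$ (on $t+5$ vertices, both $t+4,t+5$ being "apexes" with $\{t+4,t+5\}$ an edge of $\bar G$) and, when $t=1$, to $G_{(d)_1}$ (the $4$-gonal dipyramid). (ii) exactly one of them, say $w_2$, misses a vertex of $C$ --- then (O-4)(ii) gives that the induced graph on $V(C)\cup\{w_1,w_2\}$ is, when $t=1$, forced to be $G_{(d)_2}$, and when $t>1$ one derives a contradiction or recovers $G_{(b)}$; for $t=1$ with both $w_i$ missing vertices, (O-4)(ii) directly yields $G_{(c)}$ or $G_{(d)_2}$. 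Throughout, the constraints $\beta_{t+2,j}(I)=0$ for all $j\geq t+4$ (equivalently $\widetilde H_{j-t-4}(\Delta(\bar G)_W;\mathbb{K})=0$ for $|W|=j$) via Hochster's formula~(\ref{Hochster}) are what rule out the stray configurations, exactly as in the proofs of Lemma~\ref{sedaghashang} and Proposition~\ref{number of vertices}.

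The main obstacle I anticipate is the bookkeeping in the $n=t+5$, $t=1$ case: with two extra vertices each allowed to be "deficient" on $C$, there are several subcases for the pair of neighborhood-patterns $N_{\bar G}(w_i)\cap V(C)$, and one must carefully apply the homological vanishing $\beta_{3,5}(I)=\beta_{3,6}(I)=\beta_{3,7}(I)=\beta_{3,8}(I)=0$ (translated through Hochster to vanishing of $\widetilde H_0,\widetilde H_1,\widetilde H_2,\widetilde H_3$ of the relevant induced complexes) to eliminate all patterns except those producing $G_{(c)}$, $G_{(d)_1}$, $G_{(d)_2}$. The arguments are of the same flavor as those already carried out --- exhibiting an explicit cycle or dipyramid class $T(C)$ or $T(D)$ in a kernel, and showing it cannot be a boundary because $\Delta_W$ lacks faces of the required dimension --- but assembling them into an exhaustive case check, and verifying that the graphs so obtained match (up to isomorphism, and up to the no-isolated-vertex normalization) exactly the list in Example~\ref{mesal}, is where the care is needed. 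I would finish by observing that, since every step used only Hochster's formula and combinatorial data of $\bar G$, none of it depends on $\operatorname{char}\mathbb{K}$, which gives Corollary~\ref{final note} as a byproduct.
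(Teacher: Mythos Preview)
Your overall strategy matches the paper's: invoke Example~\ref{mesal} for the ``if'' direction, then for ``only if'' fix a minimal $(t+3)$-cycle $C$ in $\bar G$, use Proposition~\ref{number of vertices} to get $n\in\{t+4,t+5\}$, and run a case analysis on the extra vertices governed by Corollary~\ref{ostad} and the observations (O-1)--(O-4). However, your identification of which graphs land in which case is off in several places, and as written the analysis would not close.

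First, in the $n=t+4$ case: if the single extra vertex $w$ were adjacent in $\bar G$ to \emph{all} of $V(C)$, then $w$ would be isolated in $G$, which is excluded; so this subcase simply does not occur, and $G_{(b)}$ (which has $t+5$ vertices) never arises here. More importantly, $N_{\bar G}(w)\cap V(C)$ is \emph{not} always a consecutive arc: the graph $G_{(c)}$ lives in the $n=t+4$ case with $t=1$, arising exactly when $w$ is adjacent to two non-adjacent vertices of the $4$-cycle (this is the paper's subcase $l=3$ with $\{u_2,u_{t+4}\}\notin E(\bar G)$, which forces $t=1$ via Lemma~\ref{sedaghashang}). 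You have $G_{(c)}$ misfiled under $n=t+5$.

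Second, in the $n=t+5$ case you misread the structure of $G_{(b)}$: in that graph only the vertex $t+4$ is an apex over $C$, while $t+5$ is adjacent in $\bar G$ to just two consecutive vertices of $C$ (and is \emph{not} adjacent to $t+4$). So $G_{(b)}$ comes from the subcase ``one extra vertex is adjacent to at most two vertices of $C$'' (forcing the other to be an apex by Corollary~\ref{ostad}(b)), not from ``both are apexes with an edge between them''. When both extra vertices are adjacent to at least three vertices of $C$, the paper shows the $4$-cycle $u_{t+4}-u_a-u_{t+5}-u_b-u_{t+4}$ forces $t=1$, and then the no-isolated-vertex hypothesis rules out $\{u_{t+4},u_{t+5}\}\in E(\bar G)$, landing on $G_{(d)_1}$ or $G_{(d)_2}$. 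Your case tree should be reorganized along these lines.
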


\begin{proof}
	The ``if" implication follows from Example~\ref{mesal}. We prove the converse. 
 Suppose $\Index(I)=t$. Then  there is a minimal cycle $C:=u_1-u_2-\cdots-u_{t+3}-u_1$ in $\bar{G}$. Moreover, by Proposition~\ref{number of vertices} there exists $u_{t+4}\in [n]\setminus V(C)$ which by Corollary~\ref{ostad}(a) is adjacent to some vertex $u_i$ of $V(C)$ in $\bar{G}$.  Without loss of generality we may assume that $i=1$. By Proposition~\ref{number of vertices} we have $n-(t+3)\leq 2$. We consider two cases: 

\medspace
{\em Case} (i): Suppose  $[n]\setminus V(C)=\{u_{t+4}\}$ and let  $1\leq l\leq t+3$ be the largest integer such that $\{u_{l},u_{t+4}\}\in E(\bar{G})$. 
\item[$\bullet$] If $l=1$, then $\bar{G}=G_{(a)_1}$ in Figure~\ref{type a}.  
\item[$\bullet$] If $l=2$, then $\bar{G}=G_{(a)_2}$ in Figure~\ref{type a}. 
\item[$\bullet$]  If  $3\leq l<t+3$, then there is a minimal cycle $C'=u_1-u_{t+4}- u_l- u_{l+1}-\cdots-u_{t+3}-u_1$ of length $t+6-l$. By Lemma~\ref{sedaghashang}, $t+6-l=t+3$ which implies $l=3$. If $\{u_2,u_{t+4}\}\notin E(\bar{G})$, then we will have a minimal $4$-cycle on the  vertex set $\{u_1,u_2,u_3,u_{t+4}\}$. It follows from  Lemma~\ref{sedaghashang} that $|C|=4$. Hence, $\bar{G}$ is isomorphic to $G_{(c)}$ in Figure~\ref{type c}. If $\{u_2,u_{t+4}\}\in E(\bar{G})$, then $\bar{G}=G_{(a)_3}$ in Figure~\ref{type a}.
\item[$\bullet$] If $l=t+3$, then since $G$ does not have isolated vertices, there exists $1<j<t+3$, such that $\{u_{t+4},u_j\}\notin E(\bar{G})$.  Let $k,k'$ with $1\leq k< j<k'\leq t+3$ be respectively the largest index and the smallest index such  that  $\{u_k,u_{t+4}\}, \{u_{k'},u_{t+4}\}\in E(\bar{G})$. It follows that  $C'=u_{t+4}-u_k- u_{k+1}-\cdots-u_{k'}-u_{t+4}$ is a minimal cycle and hence $|C'|=k'-k+2=t+3$. Therefore we have  either $(k,k')=(1,t+2)$ or $(k,k')=(2,t+3)$. In both cases $\bar{G}$ is isomorphic to $G_{(a)_3}$.

{\em Case} (ii): Suppose   $[n]\setminus V(C)=\{u_{t+4},u_{t+5}\}$.  By Corollary~\ref{ostad}(a) both $u_{t+4},u_{t+5}$ are adjacent to at least one vertex of $C$ in $\bar{G}$. 
	\item[$\bullet$] Suppose in the graph $\bar{G}$ the vertex $u_{t+4}$  is adjacent to  at most $2$ vertices of $C$ one of which is $u_1$. Then $u_{t+5}$ is adjacent to all vertices of $C$ in $\bar{G}$ by  Corollary~\ref{ostad}(b). Since $\Delta_W$ is connected for $W=[n]\setminus \{u_1\}$, we conclude that $u_{t+4}$ is adjacent to some vertex in $[n]\setminus \{u_1\}$ in $\bar{G}$ and since $u_{t+5}$ is not isolated in $G$, $u_{t+5}$ is not adjacent to $u_{t+4}$ in $\bar{G}$  and hence  $u_{t+4}$ is adjacent to some  $u_j\in V(C)$ with $j\neq 1$ in $\bar{G}$. We show that either $j=2$ or else $j=t+3$. Otherwise there is a minimal cycle $C':=u_{t+4}-u_1-u_2-\cdots-u_j-u_{t+4}$ of length $j+1$ which is equal to $t+3$, by Lemma~\ref{sedaghashang}. Thus $j=t+2$ which implies that $C'':=u_{t+4}-u_{t+2}-u_{t+3}-u_1-u_{t+4}$ is a minimal $4$-cycle and hence $t=1$. But   $T(C''')\in \ker\partial_1\setminus\im\partial_{2}$, where $C''':=u_{5}-u_3-u_{6}-u_1-u_{5}$ is a minimal $4$-cycle  in $\bar{G}$. It follows that $\beta_{3,6}(I)\neq 0$, a contradiction. Thus either $j=2$ or else $j=t+3$ and hence $\bar{G}$ is isomorphic to $G_{(b)}$ in Figure~\ref{type b}. Same holds if we interchange $u_{t+4}$ and $u_{t+5}$ in the above argument.

	\item[$\bullet$]  Now suppose $u_{t+4}, u_{t+5}$ are adjacent to at least $3$ vertices of $C$ in $\bar{G}$. 
	If   $u_{t+4}$ as well as $u_{t+5}$ is not adjacent to some vertices of $C$ in $\bar{G}$, then as seen in the argument of (O-4)(ii), the graph $\bar{G}$ is isomorphic to $G_{(d)_2}$ in Figure~\ref{type d}.  
	
	Now consider the case that at least one of the vertices $u_{t+4}, u_{t+5}$, say $u_{t+5}$, is adjacent to all vertices of $C$ in $\bar{G}$.  The argument below  also works if we interchange $u_{t+4}, u_{t+5}$.
	
	 Suppose  $u_{t+4}$ is adjacent to (at least) three vertices $u_1, u_k, u_j$ of $C$  with $1<k<j\leq t+3$ in the graph $\bar{G}$. Since $u_{t+5}$ is not isolated in $G$, we have $\{u_{t+4},u_{t+5}\}\notin E(\bar{G})$.  If $(k,j)\neq (2,t+3)$, then we get the minimal $4$-cycle $C'=u_{t+4}-u_{1}-u_{t+5}-u_l-u_{t+4}$, where $l=k$ if $k\neq 2$, and else $l=j$, and  hence $t=1$.  If  $(k,j)= (2,t+3)$, then  we  get the minimal $4$-cycle $C''=u_{t+4}-u_{2}-u_{t+5}-u_{t+3}-u_{t+4}$ and so  $t=1$ also in this case. 
From $t=1$ we conclude that $u_1, u_k, u_j$ are   successive vertices in $C$. 	
	Without loss of generality  we may assume that $(k,j)=(2,3)$.  Since  $\{u_5,u_6\}\notin E(\bar{G})$, in case $\{u_{5}, u_4\}\notin E(\bar{G})$,  
	   the graph  $\bar{G}$ is  isomorphic to $G_{(d)_2}$, and in case $\{u_{5}, u_4\}\in E(\bar{G})$, it  is isomorphic to $G_{(d)_1}$ in Figure~\ref{type d}.
 This completes the proof.
	\end{proof}

All the arguments so far in this section were characteristic independent; consequently 
\begin{cor}\label{final note}
	The property of having almost maximal finite index for edge ideals is independent of the characteristic of the base field. In other words, given a simple graph $G$,  its edge ideal $I(G)$ has almost maximal finite index over some field if and only if it has this property over all fields. 
\end{cor}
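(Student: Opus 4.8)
The plan is to obtain the statement immediately from the combinatorial classification in Theorem~\ref{check-out} together with the verifications in Example~\ref{mesal}, after observing that neither the formulation of that classification nor any step in its proof refers to the characteristic of $\mathbb{K}$. Precisely: fix a graph $G$ with no isolated vertex and suppose first that $I(G)$ has almost maximal finite index over \emph{some} field $\mathbb{K}$. Applying Theorem~\ref{check-out} over $\mathbb{K}$, we conclude that $\bar G$ is isomorphic to one of the graphs $G_{(a)_1}, G_{(a)_2}, G_{(a)_3}, G_{(b)}, G_{(c)}, G_{(d)_1}, G_{(d)_2}$ of Example~\ref{mesal}. As this is a list of graphs that does not mention the base field, we may now run the computation of Example~\ref{mesal} over an arbitrary field $\mathbb{K}'$; it yields $\Index(I(G)) = \projdim(I(G)) - 1$ over $\mathbb{K}'$. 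Hence $I(G)$ has almost maximal finite index over \emph{every} field, and the equivalence follows.

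What must be checked for this argument to be legitimate is that all the tools used in this section are genuinely characteristic-free; this is the content of the remark preceding the corollary, and I would record it as follows. By Theorem~\ref{index of graphs} the quantity $\Index(I(G))$ is already known to be field-independent, so only the possible dependence of $\projdim(I(G))$ on $\mathbb{K}$ is at stake. Every Betti-number statement used in the section is obtained through Hochster's formula~(\ref{Hochster}), and in each instance the vanishing or non-vanishing of the relevant reduced homology $\widetilde H_{\bullet}(\Delta_W;\mathbb{K})$ is established combinatorially rather than by a homological computation that could detect torsion: non-vanishing always comes from a cycle $C$, or a dipyramid $D$, whose canonical cycle $T(C)$ from~(\ref{cycle kernel}), resp.\ $T(D)$ from~(\ref{dipyramid kernel}), fails to be a boundary simply because the higher faces needed to kill it are absent from $\Delta_W$; vanishing always comes either from connectivity through (O-1) (and $\dim_{\mathbb{K}}\widetilde H_0(\Gamma;\mathbb{K})$, being one less than the number of connected components of $\Gamma$, does not depend on $\mathbb{K}$) or from an explicitly exhibited chain $L$ with integer coefficients satisfying $\partial(L)=T(C)$, resp.\ $\partial(L)=T(D)$, an identity valid over any field. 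Consequently every comparison of Betti numbers invoked in (O-1)--(O-4), Example~\ref{mesal}, Lemma~\ref{sedaghashang}, Corollary~\ref{ostad}, Proposition~\ref{number of vertices} and Theorem~\ref{check-out} holds simultaneously over all fields, and the argument of the previous paragraph goes through verbatim in any characteristic.

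The only real obstacle is exactly this bookkeeping: one has to be sure that no step covertly relied on a torsion homology class, the sort of phenomenon that, for instance, singles out characteristic $2$ for triangulations of the real projective plane, as mentioned at the start of the section. In the present setting this obstacle does not bite, because every simplicial complex $\Delta_W$ appearing in the proofs has at most $t+6$ vertices and is of a very restricted combinatorial type --- a short induced cycle together with one or two additional vertices, possibly arranged as a cone or a dipyramid --- so its reduced homology is elementary and manifestly independent of the characteristic. With this observation in hand, the corollary is simply a restatement of Theorem~\ref{check-out} combined with Example~\ref{mesal}.
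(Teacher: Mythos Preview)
Your proposal is correct and follows the same approach as the paper: the paper's proof consists solely of the sentence ``All the arguments so far in this section were characteristic independent; consequently'', and you have simply unpacked this by explaining why each use of Hochster's formula, connectivity, and the explicit cycles $T(C)$, $T(D)$ is field-independent. The only cosmetic point is that you restrict to graphs with no isolated vertex (as in Theorem~\ref{check-out}), whereas the corollary is stated for arbitrary simple graphs; this is harmless since isolated vertices of $G$ contribute variables not appearing in $I(G)$ and hence leave $\Index$ and $\projdim$ unchanged.
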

\begin{cor}\label{depth}
	Let $G$ be a simple graph on $[n]$ with no isolated vertex such that $I=I(G)\subset\!S$ has almost maximal finite index.   Then over all fields
	\begin{align*}
\mathrm{pd} (I)=\begin{cases} n-3 \quad \text{if } \bar{G}=G_{(c)} \text{ or } G_{(a)_i},\ i=1,2,3,\\ n-4 \quad \text{if } \bar{G}=G_{(b)}\text{ or } G_{(d)_i}, \ i=1,2.\end{cases}
\end{align*}
In particular, $3\leq \mathrm{depth} (I)\leq 4$.
\end{cor}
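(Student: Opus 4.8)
The plan is to deduce Corollary~\ref{depth} as an immediate consequence of the classification in Theorem~\ref{check-out} together with the computations already carried out in Example~\ref{mesal}. By Theorem~\ref{check-out}, if $I=I(G)$ has almost maximal finite index, then $\bar{G}$ is isomorphic to one of the graphs $G_{(a)_1}, G_{(a)_2}, G_{(a)_3}, G_{(b)}, G_{(c)}, G_{(d)_1}, G_{(d)_2}$ listed in Example~\ref{mesal}. So the first step is simply to record, for each of these seven graphs, the value of $\projdim(I)$ in terms of the index $t$ and then rewrite it in terms of the number of vertices $n$, which by Proposition~\ref{number of vertices} is either $t+4$ or $t+5$.

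For the graphs $G_{(a)_1}, G_{(a)_2}, G_{(a)_3}$, Example~\ref{mesal}(a) shows $\projdim(I)=t+1$, and these graphs have $n=t+4$ vertices, so $\projdim(I)=n-3$. For $G_{(c)}$, Example~\ref{mesal}(c) gives $\Index(I)=1$, hence $t=1$, and $\projdim(I)=2$; since $G_{(c)}$ has $5$ vertices, again $\projdim(I)=n-3$. For $G_{(b)}$, Example~\ref{mesal}(b) gives $\projdim(I)=t+1$, but this graph has $n=t+5$ vertices, so $\projdim(I)=n-4$. Finally, for $G_{(d)_1}, G_{(d)_2}$, Example~\ref{mesal}(d) gives $\projdim(I)=2$ with $t=1$, and these graphs have $6$ vertices, so $\projdim(I)=n-4$. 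This establishes the case distinction in the displayed formula.

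For the last assertion, I would use the Auslander--Buchsbaum formula $\mathrm{depth}(I)=n-\projdim(I)$ (recalling that $\projdim_S(I)=\projdim_S(S/I)-1$, so one must be slightly careful whether $\projdim(I)$ or $\projdim(S/I)$ is meant, but with the convention fixed in Section~\ref{section 1} one has $\mathrm{depth}(I) = n - \projdim(I)$ when $I$ is viewed as a module). Plugging in $\projdim(I)\in\{n-3,n-4\}$ yields $\mathrm{depth}(I)\in\{3,4\}$, giving the stated bounds $3\leq\mathrm{depth}(I)\leq 4$.

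There is essentially no obstacle here: the work is entirely bookkeeping, matching each graph in the classification to the projective-dimension computation already done in Example~\ref{mesal} and converting $t$ to $n$ via Proposition~\ref{number of vertices}. The only mild subtlety worth stating explicitly is keeping track of the shift between $\projdim(I)$ and $\projdim(S/I)$, and confirming that the count of vertices of each $G_{(a)_i}$, $G_{(b)}$, $G_{(c)}$, $G_{(d)_i}$ is exactly $t+4$ or $t+5$ as claimed; both are visible directly from Figures~\ref{type a}--\ref{type d}. All arguments are characteristic independent, as already noted for Theorem~\ref{check-out} and Example~\ref{mesal}, so the statement holds over all fields.
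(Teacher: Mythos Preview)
Your proposal is correct and follows essentially the same route as the paper's proof: invoke the classification Theorem~\ref{check-out}, read off the vertex count $n\in\{t+4,t+5\}$ for each of the seven graphs, use $\projdim(I)=t+1$, and finish with Auslander--Buchsbaum. The only cosmetic difference is that the paper obtains $\projdim(I)=t+1$ directly from the definition of almost maximal finite index rather than by citing Example~\ref{mesal} case by case.
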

\begin{proof}
	Let $\mathrm{index}(I)=t$. By Theorem~\ref{check-out}, $\bar{G}\in\{G_{(a)_1}, G_{(a)_2}, G_{(a)_3}, G_{(b)}, G_{(c)}, G_{(d)_1}, G_{(d)_2}\}$.  
	It follows that 
	\begin{align*}
\hspace{1cm} n=\begin{cases} t+4\quad \text{if } \bar{G}=G_{(c)}, G_{(a)_i},\ i=1,2,3,\\ t+5\quad \text{if } \bar{G}=G_{(b)}, G_{(d)_i},\  i=1,2.\end{cases}
\end{align*}
Since $\projdim(I)=t+1$, the assertion follows. The last assertion follows from the Auslander-Buchsbaum formula. 
\end{proof}
In the rest of this section we study the last graded Betti numbers of edge ideals with almost maximal finite index. We first see in the following lemma  that the graded Betti numbers of the edge ideals with this property are  independent of the characteristic of the base field. 
The proof takes a great benefit of Katzman's paper  \cite{Ka}.

\begin{lem}\label{char 2}
Let $I\subset S$ be the edge ideal of a simple graph with almost maximal finite index. Then the Betti numbers of $I$ are characteristic independent. 
\end{lem}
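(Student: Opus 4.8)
The plan is to combine the classification in Theorem~\ref{check-out} with Hochster's formula~(\ref{Hochster}) and the universal coefficient theorem, thereby reducing the whole question to the torsion-freeness of a handful of low-dimensional integral homology groups of induced subcomplexes; this is precisely the type of analysis carried out in Katzman's paper~\cite{Ka}. First I would set $t=\Index(I)$, so $\projdim(I)=t+1$. The linear strand is harmless for any edge ideal: by (\ref{Hochster}), $\beta_{i,i+2}(I)=\sum_{|W|=i+2}\dim_{\mathbb{K}}\widetilde{H}_0(\Delta_W;\mathbb{K})$, and by (O-1) this counts components of $\Delta_W$, hence is field-independent. Since $\Index(I)=t$ and $\projdim(I)=t+1$, the only Betti numbers that can be non-linear are $\beta_{t,j}(I)$ with $t+3\le j\le n$ and $\beta_{t+1,j}(I)$ with $t+4\le j\le n$, and by (\ref{Hochster}) these are controlled by $\widetilde{H}_{j-t-2}(\Delta_W;\mathbb{K})$ and $\widetilde{H}_{j-t-3}(\Delta_W;\mathbb{K})$ for $|W|=j$. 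By Theorem~\ref{check-out}, $\bar{G}$ is one of $G_{(a)_1},G_{(a)_2},G_{(a)_3},G_{(b)},G_{(c)},G_{(d)_1},G_{(d)_2}$, and, as in Example~\ref{mesal} and Corollary~\ref{depth}, $n\le t+5$; so every $W$ appearing above has $t+3\le |W|\le t+5$, i.e.\ $W$ omits at most two vertices.

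Next I would observe that each flag complex $\Delta=\Delta(\bar{G})$ on the list, and each of its induced subcomplexes $\Delta_W$, is at most $2$-dimensional: for $t\ge 2$ the base cycle $C$ of $\bar{G}$ has length $\geq 5$ and contributes no $2$-face, and the one or two extra vertices are joined either to at most three consecutive vertices of $C$ or (only in $G_{(b)}$) to all of $C$, still giving a $2$-dimensional complex; the remaining graphs $G_{(c)},G_{(d)_1},G_{(d)_2}$ are fixed graphs on at most six vertices. Hence $\widetilde{H}_k(\Delta_W;\mathbb{K})=0$ for $k\ge 3$, while $\widetilde{H}_0(\Delta_W;\mathbb{Z})$ and $\widetilde{H}_2(\Delta_W;\mathbb{Z})$ are free abelian; by the universal coefficient theorem the only possible source of characteristic dependence in any $\dim_{\mathbb{K}}\widetilde{H}_k(\Delta_W;\mathbb{K})$, and therefore in any $\beta_{i,j}(I)$, is torsion in $\widetilde{H}_1(\Delta_W;\mathbb{Z})$.

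It then remains to show that $\widetilde{H}_1(\Delta_W;\mathbb{Z})$ is torsion-free for every relevant $W$. Here I would invoke Katzman's framework from \cite{Ka}, which isolates exactly the configurations inside an induced subcomplex that are needed for characteristic-dependent torsion to occur; none of them fits into the very restricted complexes above. Equivalently, one can argue directly that each $\Delta_W$ deformation retracts onto a graph or a wedge of circles and $2$-spheres---the flag complex of $G_{(a)_1},G_{(a)_2}$ is a $1$-complex, that of $G_{(a)_3}$ retracts onto a single cycle, that of $G_{(b)}$ is contractible, and that of $G_{(d)_1}$ onto a $2$-sphere, with analogous (simpler) statements for proper $W$---so that $\widetilde{H}_1(\Delta_W;\mathbb{Z})$ is free; the universal coefficient formula then makes every $\beta_{i,j}(I)$ independent of $\mathbb{K}$. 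The main obstacle is performing this torsion-freeness verification \emph{uniformly} across the two infinite families $G_{(a)_i}$ and $G_{(b)}$ (parametrized by $t$) together with all of their induced subcomplexes, since a naive graph-by-graph computation does not terminate; this is exactly where either Katzman's structural criterion or a single parametric deformation-retraction argument is needed, whereas the fixed small graphs $G_{(c)},G_{(d)_1},G_{(d)_2}$ and their subcomplexes can be checked by hand.
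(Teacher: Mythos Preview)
Your reduction is sound: once you know that every relevant $\Delta_W$ has dimension at most~$2$, the universal coefficient theorem does force all characteristic dependence into the torsion of $\widetilde{H}_1(\Delta_W;\mathbb{Z})$, and the concrete collapses you list (for $G_{(a)_i}$, $G_{(b)}$, $G_{(d)_1}$) do indeed show these groups are free. The place where your write-up is still a sketch rather than a proof is exactly the one you flag yourself: you never actually carry out the torsion-freeness check uniformly in $t$ for the families $G_{(a)_i}$ and $G_{(b)}$ and for \emph{all} of their induced subcomplexes $\Delta_W$ with $|W|\in\{t+4,t+5\}$, and you do not point to a specific statement in \cite{Ka} that would do this for you. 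What you call ``Katzman's structural criterion'' is left unnamed; without either that citation or the parametric collapse argument written down, the proof has a gap at its last step.

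The paper's proof takes a genuinely different and shorter route that sidesteps the homology computation altogether. It uses two ready-made results of Katzman: \cite[Theorem~4.1]{Ka}, which says Betti numbers of edge ideals on at most $10$ vertices are field-independent (disposing of $G_{(c)},G_{(d)_1},G_{(d)_2}$ at once), and \cite[Corollary~1.6, Lemma~3.2(b)]{Ka}, which says that deleting a vertex of degree~$1$ or of degree at least $|V(G)|-4$ does not affect whether the Betti numbers are field-independent. In $\overline{G_{(b)}}$ the vertex $t+4$ has degree~$1$, reducing to $\overline{G_{(a)_2}}$; in each $\overline{G_{(a)_i}}$ the vertex $t+4$ has degree at least $t=|V|-4$, reducing to the complement of a $(t+3)$-cycle, whose Betti numbers are already known to be characteristic-independent (linear Betti numbers count components, and the single non-linear one is $\beta_{t,t+3}=1$ over every field by \cite[Theorem~4.1, Proposition~4.3]{BHZ}). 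Thus the paper never touches $\widetilde{H}_1(\Delta_W;\mathbb{Z})$ directly. Your approach is more self-contained (it does not import the vertex-deletion machinery), but it trades that for a case analysis you would still have to finish; the paper's approach buys a two-line reduction at the cost of citing Katzman's lemmas.
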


\begin{proof}
\cite[Theorem~4.1]{Ka} states that the Betti numbers  of the edge ideals of the graphs with at most $10$ vertices are independent of the characteristic of the base field.  
 It follows that the graded Betti numbers of $I=I(G)$ with $\Index (I)=t$  are characteristic independent when $\bar{G}\in \{G_{(c)},G_{(d)_1}, G_{(d)_2}\}$. 

By \cite[Corollary~1.6, Lemma~3.2(b)]{Ka}, if $G$ has a vertex $v$ of degree $1$ or at least $|V(G)|-4$, then the Betti numbers of $I(G)$ are characteristic independent if and only if the Betti numbers of $I(G-\{v\})$ are characteristic independent. Here  $G-\{v\}$ is the induced subgraph of $G$ on $V(G)\setminus\{v\}$. Since the vertex $t+4$ is  of degree  one  in $\overline{G_{(b)}}$, it follows that the Betti numbers of $I(\overline{G_{(b)}})$ are characteristic independent if and only if  so are the Betti numbers of  $I(\overline{G_{(a)_2}})$.   For $\bar{G}\in \{G_{(a)_i}:\ 1\leq i\leq 3\}$, since $t+4$ is adjacent to at least $t$ vertices in  the graph $G$ and since  $|V(G)|=t+4$, it is enough to show that the Betti numbers of  $I(G-\{t+4\})$ are characteristic independent. But $G-\{t+4\}$ is the complement of a minimal cycle  of length $t+3$. Note that by Hochster's formula, all the linear Betti numbers $\beta_{i,i+2}(I)$ are obtained from computing the dimension of $\widetilde{H}_0(\Delta(\bar{G})_W;\mathbb{K})$  with  $W\subseteq V(G)$ and $|W|=i+2$, and this dimension equals  the number of connected components of $\bar{G}_W$ minus one. Therefore these Betti numbers do not depend on the characteristic of the base field, see also \cite[Corollary~1.2(b)]{Ka}. Moreover, as seen in \cite[Theorem~4.1, Proposition~4.3]{BHZ},  the edge ideal of the complement of a minimal cycle has one nonzero non-linear Betti number $\beta_{t,t+3}(I)=1$ over all fields. Therefore the Betti numbers of  $I(G-\{t+4\})$ are characteristic independent, as desired.
\end{proof}

For the edge ideals with linear resolution all non-linear Betti numbers are zero. For the edge ideals  with maximal finite index $t$, it is seen in \cite{EGHP, BHZ} that there is only one nonzero non-linear Betti number $\beta_{t,t+3}(I)=1$ over all fields.   In the  case of ideals with almost maximal finite index  with $\Index(I)=t$, the  non-linear Betti numbers  appear in the last two homological degrees of the minimal free resolution. By the arguments that we had so far, it is easy to compute the  $(t+1)$-th graded Betti numbers and also $t$-th non-linear Betti numbers, where $I$ is the edge ideal with almost maximal finite index. Nevertheless, in the cases $\bar{G}=G_{(c)}$ and $\bar{G}=G_{(d)_i}$ for $i=1,2$  one can see the whole Betti table, using {\em Macaulay 2}, \cite{M2}.  
Note that since all the graphs in Example~\ref{mesal} have at most $t+5$ vertices,  where $\Index(I(G))=t$, and since the edge ideals are generated in degree $2$, by Hochster's formula  it is enough to consider $\beta_{i,j}(I(G))$ for $ i+2\leq j\leq t+5$.

\begin{prop}\label{Bettis of almost}
 Let $G$ be a graph such that $I:=I(G)$ has almost maximal finite index $t$. Then over all fields, $\beta_{t,t+4}(I)=\beta_{t,t+5}(I)=0$ and 
  \begin{align*}
  \beta_{t,t+3}(I)=\begin{cases} 1\quad \text{if } \bar{G}=G_{(a)_1} \text{ or } G_{(a)_2}\text{ or } G_{(b)},\\ 2\quad\text{if } \bar{G}=G_{(a)_3},\\ 3\quad\text{otherwise},\end{cases} 
  \end{align*}
  \begin{align*}
\beta_{t+1,t+3}(I)&=\begin{cases} 1\quad \text{if } \bar{G}=G_{(a)_1} \text{ or } G_{(b)},\\ 0\quad\text{otherwise,}\end{cases}\\
 \beta_{t+1,t+4}(I)&=\begin{cases} 2\quad \text{if } \bar{G}=G_{(c)} \text{ or } G_{(d)_2},\\ 0\quad\text{if }\bar{G}=G_{(d)_1},\\ 1\quad\text{otherwise,}\end{cases}\\
  \beta_{t+1,t+5}(I)&=\begin{cases} 1\quad \text{if } \bar{G}=G_{(d)_1},\\ 0\quad\text{otherwise.}\end{cases}
\end{align*}
In particular, 
\begin{align*}
\hspace*{-2.cm}\mathrm{reg} (I)=\begin{cases} 4\quad \text{if } \bar{G}=G_{(d)_1},\\ 3\quad\text{otherwise.}\end{cases}
\end{align*}
 \end{prop}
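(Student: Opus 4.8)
The plan is to compute all the indicated Betti numbers case by case using Hochster's formula, Formula~(\ref{Hochster}), exploiting the classification in Theorem~\ref{check-out} which tells us $\bar{G}$ is one of the seven explicit graphs $G_{(a)_1},G_{(a)_2},G_{(a)_3},G_{(b)},G_{(c)},G_{(d)_1},G_{(d)_2}$, each on at most $t+5$ vertices. Since $I$ is generated in degree $2$, for any $W\subseteq[n]$ the summand $\widetilde{H}_{|W|-i-2}(\Delta_W;\mathbb{K})$ contributing to $\beta_{i,|W|}(I)$ is nonzero only in low homological degree, and because $|V(G)|\le t+5$ it suffices to examine $W$ of size $t+3$, $t+4$, $t+5$. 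I would organize the computation by the cardinality of $W$.

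First, the vanishing $\beta_{t,t+4}(I)=\beta_{t,t+5}(I)=0$: these require $\widetilde{H}_{2}(\Delta_W;\mathbb{K})\ne0$ with $|W|=t+4$, and $\widetilde{H}_{3}(\Delta_W;\mathbb{K})\ne0$ with $|W|=t+5$, respectively. Since $\Delta=\Delta(\bar G)$ is a flag complex and the relevant $\bar G_W$ are small graphs with no large "hollow" subcomplexes — indeed the only higher homology we ever see comes from an induced dipyramid, which gives $\widetilde H_2$ only when $|W|=t+6$ — one checks directly that no induced subgraph on $t+4$ (resp.\ $t+5$) vertices of any of the seven graphs supports a $2$-cycle (resp.\ $3$-cycle) in homology; I would phrase this via (O-3) and the structure of the seven graphs. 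For $\beta_{t,t+3}(I)$, by (O-2) the dimension of $\widetilde H_1(\Delta;\mathbb K)$ (taking $W=[n]$ is not right here; rather $W$ ranges over $(t+3)$-subsets, but the minimal cycles all have length $t+3$, so the relevant $W=V(C)$) counts the minimal cycles $C$ of $\bar G$ whose class $T(C)+\im\partial_2$ is nonzero; since $\im\partial_2^{V(C)}=0$ for a minimal cycle, this is exactly the number of minimal cycles, giving $1$, $1$, $2$, $1$, $3$, $3$, $3$ in the seven cases — matching the stated values once one also checks, for $G_{(c)},G_{(d)_1},G_{(d)_2}$, that the three length-$4$ minimal cycles give linearly independent classes (they do, as the incidence is a rank-$3$ configuration).

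For the homological-degree-$(t+1)$ numbers, $\beta_{t+1,t+3}(I)$ needs $\widetilde H_0(\Delta_W;\mathbb K)\ne0$ with $|W|=t+3$, i.e.\ a disconnected induced subgraph of $\bar G$ on $t+3$ vertices; this happens precisely when deleting the vertices outside a minimal cycle plus some cycle-vertex disconnects things, which by inspection occurs only for $G_{(a)_1}$ (delete $u_{t+4}$'s neighbour? — no: delete the two end vertices, leaving $u_{t+4}$ isolated) and $G_{(b)}$, giving $1$; otherwise $\bar G_W$ is connected. The numbers $\beta_{t+1,t+4}(I)$ and $\beta_{t+1,t+5}(I)$ need $\widetilde H_1(\Delta_W;\mathbb K)\ne0$ with $|W|=t+4$ and $\widetilde H_2(\Delta_W;\mathbb K)\ne0$ with $|W|=t+5$; here the arguments of Example~\ref{mesal} already did most of the work — in particular $\beta_{t+1,t+5}(I)=1$ exactly for the dipyramid $G_{(d)_1}$ (via (O-3)), and for the remaining cases one counts, for each $(t+4)$-subset $W$, the dimension of $\widetilde H_1(\Delta_W;\mathbb K)$ using (O-2): a minimal cycle of $\bar G_W$ contributes iff its $T$-class is not a boundary in $\Delta_W$. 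The computation of $\beta_{t+1,t+4}$ for $G_{(c)}$ and $G_{(d)_2}$ (where it equals $2$) and $G_{(d)_1}$ (where it is $0$, because every $5$-subset is connected and adding back the sixth vertex kills the relevant $1$-cycles) is the part needing the most care; for the small graphs $G_{(c)},G_{(d)_i}$ one may simply quote {\em Macaulay~2}, \cite{M2}, as remarked before the Proposition, and for the $G_{(a)_i}$, $G_{(b)}$ families the relevant induced subgraphs are complements of cycles-with-pendants whose homology is computed exactly as in \cite{BHZ}. Finally, $\mathrm{reg}(I)=\sup\{j-i:\beta_{i,j}(I)\ne0\}$ reads off from the table: the only Betti number with $j-i>3$ is $\beta_{t+1,t+5}(I)$ for $G_{(d)_1}$, giving $\mathrm{reg}=4$ there and $3$ otherwise.

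The main obstacle I anticipate is the bookkeeping in the $\widetilde H_1$ computations for $|W|=t+4$ in the infinite families $G_{(a)_i}$ and $G_{(b)}$: one must show that for every $(t+4)$-subset $W$ the first reduced homology of $\Delta(\bar G)_W$ has exactly the claimed dimension, which amounts to checking, for the few combinatorial types of such $W$ (those containing all of $V(C)$, those missing one cycle vertex, etc.), whether the length-$(t+3)$ cycle bounds after adjoining the extra vertex — a cone-detection argument of the type already used repeatedly in (O-4) and in the proof of Lemma~\ref{sedaghashang}. Once that case analysis is set up, every individual verification is routine.
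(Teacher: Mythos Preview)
Your approach---Hochster's formula plus case analysis over the seven graphs of Theorem~\ref{check-out}---is exactly the paper's approach, and the paper's own proof is equally terse, essentially declaring the equalities to be ``straightforward consequences'' of Hochster's formula and Observation~\ref{connected induced graphs}.

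There is, however, one genuine error in your write-up which the paper's proof goes out of its way to correct. You assert that for $G_{(c)}$ (and $G_{(d)_1},G_{(d)_2}$) the three length-$4$ minimal cycles ``give linearly independent classes \dots\ as the incidence is a rank-$3$ configuration.'' This is false in $\widetilde H_1(\Delta;\mathbb K)$: for $G_{(c)}$ the class $T(1\!-\!2\!-\!3\!-\!4\!-\!1)$ is a $\mathbb K$-linear combination of the classes of the other two minimal $4$-cycles, so $\dim_{\mathbb K}\widetilde H_1(\Delta(G_{(c)});\mathbb K)=2$, not $3$. Likewise for $G_{(a)_3}$ the two minimal-cycle classes satisfy a relation (modulo the two triangles), and $\dim_{\mathbb K}\widetilde H_1(\Delta(G_{(a)_3});\mathbb K)=1$. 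You place this remark in your $\beta_{t,t+3}$ paragraph, where it is harmless (there the minimal cycles live in \emph{different} subsets $W=V(C)$, so no linear-independence question arises and the Betti number really is just the count of distinct minimal-cycle vertex sets); but it is precisely the computation of $\beta_{t+1,t+4}(I)$---where $W=[n]$ for $G_{(c)}$ and $G_{(a)_3}$---that hinges on this dependence. Your rank-$3$ claim would give $\beta_{2,5}(I(\overline{G_{(c)}}))=3$ rather than the correct value~$2$. You escape the consequence only because you then defer these small cases to \emph{Macaulay~2}; if you intend to argue them by hand, the dependence among the $T(C)$ is the key point.
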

 \begin{proof}
 All the equalities are straightforward consequences of the use of Hochster's formula and Observation~\ref{connected induced graphs}. However, the  Betti number $\beta_{t,t+3}(I)$  can be also deduced from \cite[Theorem~4.6]{FG}.  
  It is worth to emphasize  that although $\widetilde{H}_1(\Delta,\mathbb{K})$ is spanned by the set  of $0\neq T(C)+\im\partial_2$ for all minimal cycles $C$ of $\bar{G}$, this set  may not be  a basis. In case $\bar{G}=G_{(c)}$, the graph $\bar{G}$ has three minimal cycles $C$ of length $4$ with $T(C)\notin \im\partial_{2}$,  but for the cycle $C=1-2-3-4-1$, $T(C)$ is a  linear combination of   $T(C'), T(C'')$, where $C', C''$ are the two other cycles of $G_{(c)}$. Hence $\dim_{\mathbb{K}}\widetilde{H}_1(\Delta(G_{(c)}),\mathbb{K})=2$.  In the case of   $G_{(a)_3}$, we have  $0\neq T(C)+ \im\partial_{2}$, where  $C$ is the minimal cycle on $[t+3]$, but $T(C)$ is a linear combination of $T(C'), T(C''),T(C''')$, where $C'$ is the minimal cycle on $[t+4]\setminus \{2\}$, and $C'', C'''$ are the two triangles in $G_{(a)_3}$ and hence $\dim_\mathbb{K}\widetilde{H}_1(\Delta(G_{(a)_3}),\mathbb{K})=1$.
 \end{proof}
 

\section{Powers of edge ideals with large Index}\label{powers of almost maximal}
Due to a result of Herzog, Hibi and Zheng, \cite[Theorem~3.2]{HHZh1}, if the edge ideal $I:=I(G)$ has a linear resolution, that is $\Index (I)=\infty$, then all of its powers have a linear resolution as well. In case $I$ has maximal finite index $t>1$, then by \cite[Corollary~4.4]{BHZ}  the ideal $I^s$ has a linear resolution for all $s\geq 2$. Note that in general for any edge ideal $I$ with  $\Index(I)=1$, one has  $\Index(I^s)=1$ for all $s\geq 2$, see Remark~\ref{part 2 of theorem} below. In this section we investigate when the higher powers of the edge ideal $I$ with  almost maximal finite index have  a linear resolution.   Indeed, the aim of this section is to prove the following:

\begin{thm}\label{powers}
	Let $G$ be a simple graph  with no isolated vertex whose edge ideal $I(G)\subset S$ has almost maximal finite index. Then $I(G)^s$ has a linear resolution for all $s\geq 2$ if and only if $G$ is gap-free.  
\end{thm}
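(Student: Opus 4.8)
The ``only if'' direction is immediate: if $G$ has a gap then $\bar G$ contains an induced $4$-cycle, so $\mathrm{index}(I(G))=1$ by Theorem~\ref{index of graphs}, and then $\mathrm{index}(I(G)^s)=1$ for all $s\geq 2$ (this is the content of Remark~\ref{part 2 of theorem}/the result of Francisco--H\`a--Van Tuyl), so no power has a linear resolution. The real work is the converse: assuming $G$ is gap-free and $I=I(G)$ has almost maximal finite index, show $I^s$ has a linear resolution for $s\geq 2$. The strategy is to run through Theorem~\ref{check-out}'s explicit list of the possible complements $\bar G\in\{G_{(a)_1},G_{(a)_2},G_{(a)_3},G_{(b)},G_{(c)},G_{(d)_1},G_{(d)_2}\}$ and handle them case by case, using the gap-free hypothesis to prune the list.

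\textbf{First step: cut down the list using gap-freeness.} A graph $G$ is gap-free iff $\bar G$ is $C_4$-free (no induced $4$-cycle). So among the seven complements, any $G_{(\ast)}$ that contains an induced $4$-cycle corresponds to a $G$ that is \emph{not} gap-free and hence is excluded by hypothesis. Reading off Figures~\ref{type c}--\ref{type d}: $G_{(c)}$, $G_{(d)_1}$, $G_{(d)_2}$ each contain (induced) $4$-cycles, so those are eliminated. For $G_{(b)}$ with $t\ge 1$ one checks whether the ``hat'' vertices $t+4,t+5$ create an induced $4$-cycle; and for $G_{(a)_i}$ the presence of the chords forces either $t\ge 2$ (where the complement has a genuine longer minimal cycle and no induced $C_4$) or, when $t=1$, possibly an induced $C_4$. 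I would carefully determine for each of $G_{(a)_1},G_{(a)_2},G_{(a)_3},G_{(b)}$ exactly which values of $t$ (and which choices of the extra edges at vertex $t+4$, resp.\ $t+5$) leave $G$ gap-free; these surviving configurations are precisely the families $G_a$-type graphs appearing in Theorem~\ref{main G_a3} and Theorem~\ref{I^k has lin res} of the paper.

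\textbf{Second step: prove linearity of powers for each surviving family.} For the surviving $\bar G$, one wants $\mathrm{reg}(I^s)=2s$ for all $s\ge 2$, which (since $I$ is generated in degree $2$) is equivalent to $I^s$ having a linear resolution. The tools are the regularity bounds cited in the introduction: Banerjee's bound \cite[Theorem~5.2]{Ba} on $\mathrm{reg}(I(G)^{s+1})$ in terms of regularities of ideals $I(G)^s$ colon-ed by products of edges together with $\mathrm{reg}(I(G))$ and $\mathrm{reg}$ of neighbourhood-localized graphs, and the Dao--Huneke--Schweig-type bound \cite[Lemma~2.10]{DHS} giving $\mathrm{reg}$ of a monomial ideal as a sum $\mathrm{reg}(I:f)+\deg f$-type estimate. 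Concretely, one proceeds by induction on $s$: using Banerjee's ``even-connection'' lemma, $\mathrm{reg}(I^{s+1})=2(s+1)$ follows once one shows $\mathrm{reg}\big((I^{s}: e_1\cdots e_s)\,(\,\cdot\,)\big)$ stays $\le 2$ for each relevant monomial, and this reduces to a statement about the \emph{localized/neighbourhood graphs} of the very small, highly structured graphs $\bar G$ on $t+4$ vertices — essentially complements of long induced cycles with one or two extra vertices. For complements of cycles themselves the relevant regularity facts are already known (\cite[Theorem~4.1, Proposition~4.3]{BHZ} give the Betti table), and adding the dominating vertex $t+4$ only simplifies matters. I expect the argument to be assembled as: (1) identify $G_{(a)_3}$-type as the hardest surviving family and prove $\mathrm{reg}(I^s)=2s$ there (Theorem~\ref{main G_a3}), then (2) deduce the remaining surviving families ($G_{(a)_1},G_{(a)_2}$, and the gap-free instances of $G_{(b)}$) either by the same method or by noting they are obtained from the $G_{(a)_3}$ case by deleting or adding a vertex of degree $1$ or a dominating vertex, whose effect on $\mathrm{reg}$ of powers is controlled (Theorem~\ref{I^k has lin res}).

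\textbf{Main obstacle.} The bookkeeping in the inductive step via Banerjee's formula is the crux: one must control the regularity of $I^s : (e_{i_1}\cdots e_{i_s})$ for \emph{all} tuples of edges simultaneously, and show each such colon ideal is generated by quadratics with a linear resolution (equivalently that the associated ``polarized'' or neighbourhood graph has $C_4$-free complement in the relevant induced subgraph). Because the graphs in question are parametrized by $t$ (unbounded), this is not a finite check — it requires genuinely understanding how even-connection inside a complement-of-a-long-cycle-plus-a-vertex behaves, and ruling out that iterated colons reintroduce a gap. That structural analysis of the even-connected neighbourhoods is where the proof of Theorem~\ref{powers} will concentrate its effort, and it is carried out in the two auxiliary theorems (\ref{main G_a3} and \ref{I^k has lin res}) that the paper promises.
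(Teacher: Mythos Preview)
Your overall architecture is right: reduce via Theorem~\ref{check-out} to the seven explicit complements, discard those containing an induced $C_4$ (namely $G_{(c)}$, $G_{(d)_1}$, $G_{(d)_2}$, and the $t=1$ instances of $G_{(a)_i}$ and $G_{(b)}$), and then prove linearity of powers for each surviving family. The ``only if'' direction is fine.

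However, the division of labour you propose for the ``if'' direction does not match reality, and one step will actually fail. First, $G_{(a)_1}$ and $G_{(a)_2}$ are not deduced from $G_{(a)_3}$; they are disposed of immediately because $\overline{G_{(a)_1}}$ and $\overline{G_{(a)_2}}$ are cricket-free, so Banerjee's theorem \cite[Theorem~6.17]{Ba} applies directly (this is Remark~\ref{part 1 of theorem}). Second, and more seriously, your plan to close \emph{all} remaining cases by showing $\mathrm{reg}\big((I^{s+1}:m_k)\big)=2$ for every minimal generator $m_k$ of $I^s$ breaks down for $\bar G=G_{(b)}$: taking $m_k=x_{t+4}^sx_{t+5}^s$ one computes $\mathrm{reg}\big((I^{s+1}:m_k)\big)=3$, so Banerjee's bound in Theorem~\ref{Banerjee1} only gives $\mathrm{reg}(I^{s+1})\leq 2s+3$, which is useless. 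Thus $G_{(b)}$ is the hard case, not $G_{(a)_3}$, and it is not obtained from $G_{(a)_3}$ by a vertex manipulation whose effect on regularity of powers is understood.

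The paper's route for $G_{(b)}$ is genuinely different: apply Lemma~\ref{Dao} with the specific variable $x=x_{t+5}$, so that $\mathrm{reg}(I^s)\leq\max\{\mathrm{reg}(I^s:x_{t+5})+1,\ \mathrm{reg}(I^s+(x_{t+5}))\}$. The second term equals $\mathrm{reg}(I(\bar C)^s)=2s$ by \cite[Corollary~4.4]{BHZ}. For the first term one must show $(I^s:x_{t+5})$ has a $(2s-1)$-linear resolution; the paper does this by an explicit (and lengthy) linear-quotients argument on the generators of $(I^s:x_{t+5})$ (Proposition~\ref{step1} and Theorem~\ref{linquo of square}). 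Your proposal mentions Lemma~\ref{Dao} but does not isolate this as the mechanism for $G_{(b)}$, and the ``even-connection'' analysis you describe as the main obstacle is not what is needed there.
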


{ Theorem~\ref{powers} follows from Remarks~\ref{part 2 of theorem} and \ref{part 1 of theorem}, and Theorems~\ref{main G_a3} and \ref{I^k has lin res}. Indeed,} we will see in Remark~\ref{part 1 of theorem} that this theorem holds for $G$ with $G\in\{\overline{G_{(a)_1}}, \overline{G_{(a)_2}}\}$ with $t>1$. For $G=\overline{G_{(a)_3}}$ with $t>1$ and $G=\overline{G_{(b)}}$ we will prove the assertion in Theorem~\ref{main G_a3} and Theorem~\ref{I^k has lin res}, respectively. As it is mentioned in Remark~\ref{part 2 of theorem} below,  all other  graphs whose edge ideals have almost maximal finite index contain a gap. 
Recall that a {\em gap} in a graph $G$ is an induced subgraph on $4$ vertices and a pair of edges with no vertices in common which are not linked by a third edge; see the graph $G_1$ in Figure~\ref{gcd}. The graph $G$ is called {\em gap-free} if it does not admit a gap; equivalently if $\bar{G}$ does not contain an induced $4$-cycle. This property plays an important role in the study of the resolution of  powers of edge ideals; for example

\begin{prop}\label{gap free}{$($Francisco-H\`a-Van Tuyl;  unpublished, see  \cite[Proposition~1.8]{NP} and  \cite[Theorem~3.1]{BHZ}$)$}
	Let $G$ be a simple graph. If $I(G)^s$ has a linear resolution for some $s\geq 1$, then $G$ is gap-free.
\end{prop}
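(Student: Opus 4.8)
The plan is to prove the contrapositive: if $G$ contains a gap, then $I(G)^s$ does not have a linear resolution for any $s\geq 1$. So suppose $G$ has a gap, that is, there are vertices $a,b,c,d$ such that $\{a,b\},\{c,d\}\in E(G)$ while the four other pairs among these vertices are non-edges of $G$. I would first record the base case $s=1$: by Theorem~\ref{index of graphs} the gap produces an induced $4$-cycle in $\bar{G}$, hence $\Index(I(G))=1$ and $I(G)$ is not $1$-steps linear, so it has no linear resolution. For general $s$ the idea is to localize the problem at the four gap vertices.

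The key step is to pass to the induced subgraph $H=G_{\{a,b,c,d\}}$, which is exactly a gap (the graph $G_1$ of Figure~\ref{gcd}, i.e.\ two disjoint edges). I would invoke the standard fact that for an induced subgraph $H=G_W$ one has $I(H)^s = I(G)^s \cap \mathbb{K}[x_i : i\in W]$ obtained by setting the variables outside $W$ to zero, and that this restriction operation cannot increase regularity; more precisely, $\mathrm{reg}(I(G)^s)\geq \mathrm{reg}(I(H)^s)$, since $\mathbb{K}[x_i:i\in W]$ is a polynomial subring and killing a subset of the variables is an exact functor that only kills Betti numbers in the relevant multidegrees. Therefore it suffices to show that $I(H)^s$ does not have a linear resolution, i.e.\ that $\mathrm{reg}(I(H)^s) > 2s$, for the two-disjoint-edges graph $H$.

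Now $I(H) = (x_a x_b,\, x_c x_d)$ is a complete intersection of two quadrics in four variables, so everything is explicit. I would compute $I(H)^s = (x_ax_b,x_cx_d)^s$ directly: its minimal generators are $(x_ax_b)^i(x_cx_d)^{s-i}$ for $0\leq i\leq s$, and since $I(H)$ is generated by a regular sequence of degree-$2$ forms, the $s$-th power has a resolution given by the (shifted) Eagon--Northcott / Koszul-type complex of the symmetric power; concretely one reads off $\mathrm{reg}(I(H)^s)$ from the fact that for a complete intersection generated in degree $d$ by $c$ elements, $\mathrm{reg}(\mathfrak{a}^s)=ds + (d-1)(c-1)$ when $s\geq 1$ — here $d=2$, $c=2$, giving $\mathrm{reg}(I(H)^s)=2s+1>2s$. (Alternatively, and more self-containedly, one exhibits a nonzero Koszul syzygy: the relation $x_cx_d\cdot(x_ax_b)^i(x_cx_d)^{s-i-1} - x_ax_b\cdot(x_ax_b)^{i-1}(x_cx_d)^{s-i}=0$ among the minimal generators has degree $2s+2 = 2s + d$ with $d=2$, forcing $\beta_{1,2s+2}(I(H)^s)\neq 0$, hence non-linearity.) Either way, $I(H)^s$ is not linear, so neither is $I(G)^s$.

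The main obstacle, and the only point that needs care, is justifying the regularity (or Betti-number) monotonicity under restriction to an induced subgraph together with the compatibility $I(G_W)^s = (I(G)^s)|_W$; once this is in place the rest is the explicit complete-intersection computation above. I would cite \cite[Proposition~1.8]{NP} or \cite[Theorem~3.1]{BHZ} for the precise form of this restriction argument, since it is exactly the mechanism used there, and note that the whole argument is characteristic-free.
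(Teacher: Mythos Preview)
The paper does not supply its own proof of Proposition~\ref{gap free}; it merely records the statement and attributes it to Francisco--H\`a--Van~Tuyl, pointing to \cite[Proposition~1.8]{NP} and \cite[Theorem~3.1]{BHZ}. So there is no in-paper argument to compare against, and your task is really to produce a correct proof consistent with those references. Your proposal does this, and the strategy---restrict to the induced gap $H=G_{\{a,b,c,d\}}$, then compute $\mathrm{reg}\big((x_ax_b,x_cx_d)^s\big)=2s+1$ for the complete intersection of two coprime quadrics---is exactly the standard argument behind the cited results.

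Two small points of precision. First, the phrase ``killing a subset of the variables is an exact functor'' is not literally true (tensoring with $S/(x_i:i\notin W)$ over $S$ is not exact); what you actually need, and what you use, is the multigraded statement that for a monomial ideal $J\subset S$ and $W\subseteq[n]$ one has $\beta^{S_W}_{i,\alpha}(J_W)=\beta^{S}_{i,\alpha}(J)$ for every multidegree $\alpha$ supported in $W$, hence $\beta^{S_W}_{i,j}(J_W)\le\beta^{S}_{i,j}(J)$ and $\mathrm{reg}(J_W)\le\mathrm{reg}(J)$. This is routine (upper Koszul complexes, or the LCM-lattice), and together with the equality $I(G_W)^s=(I(G)^s)_W$ that you noted, it gives the needed inequality $\mathrm{reg}(I(G)^s)\ge\mathrm{reg}(I(H)^s)$. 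Second, your complete-intersection computation is correct: since $x_ax_b$ and $x_cx_d$ form a regular sequence, the minimal free resolution of $(x_ax_b,x_cx_d)^s$ is
\[
0\longrightarrow S(-2s-2)^{s}\longrightarrow S(-2s)^{s+1}\longrightarrow (x_ax_b,x_cx_d)^s\longrightarrow 0,
\]
so $\beta_{1,2s+2}\neq 0$ and $\mathrm{reg}=2s+1>2s$. With these clarifications your argument is complete and characteristic-free.
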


On the other hand, 

\begin{rem}\rm \label{part 2 of theorem}
	A more precise statement about the gap-free graphs is given in  \cite[Theorem~3.1]{BHZ} which  says that  for a  graph $G$ the following are equivalent:
	\begin{itemize}
		\item[(a)] $G$ admits a gap;
			\item[(b)] $\Index(I(G)^s)=1$ for all $s\geq 1$;
			\item [(c)] there exists $s\geq 1$ with $\Index(I(G)^s)=1$.
				\end{itemize}
	If $G$ is the graph whose complement is one of $G_{(a)_1}, G_{(a)_2}, G_{(a)_3}, G_{(b)}$ with   $t=1$, or one of  $G_{(c)}, G_{(d)_1}, G_{(d)_2}$, then  $G$ has a gap. So by the above equivalence $\Index(I(G)^s)=1$ for all $s\geq 1$ in this case.  
	\end{rem}\rm  

In order to prove Theorem~\ref{powers} for $G=\overline{G_{(a)_3}}$ with $t>1$, we need the following result of Banerjee~\cite{Ba}.


\begin{thm} \cite[Theorem~5.2]{Ba}\label{Banerjee1}
Let $G$ be a simple graph  and let $I:=I(G)$ be its edge ideal. Let $\mathcal{G}(I^s)=\{m_1,\ldots, m_r\}$. Then for all $s\geq1$
$$\mathrm{reg}(I^{s+1})\leq\max\{\mathrm{reg}(I^s),\ \mathrm{reg}(I^{s+1}: m_k) + 2s \text{ for }  1\leq k\leq r \},$$
where $(I^{s+1}: m_k)$ denotes the colon ideal, i.e., $(I^{s+1}: m_k)=\{f\in S:\ fm_k\in I^{s+1}\}$.
\end{thm}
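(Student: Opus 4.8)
The plan is to run a telescoping short exact sequence argument. Since $\mathrm{reg}(J)=\mathrm{reg}(S/J)+1$ for any proper graded ideal $J\neq 0$, the assertion is equivalent to
\[
\mathrm{reg}(S/I^{s+1})\le\max\{\mathrm{reg}(S/I^s),\ \mathrm{reg}(S/(I^{s+1}:m_k))+2s:\ 1\le k\le r\}.
\]
First I would introduce the chain of monomial ideals $K_0\subseteq K_1\subseteq\cdots\subseteq K_r$ with $K_0=I^{s+1}$ and $K_k=I^{s+1}+(m_1,\ldots,m_k)$. Since $m_1,\ldots,m_r$ generate $I^s$ and $I^{s+1}\subseteq I^s$, we have $K_r=I^s$. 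Note also $\deg m_k=2s$, because $I$ is generated in degree $2$; and no $m_k$ lies in $K_{k-1}$, since its degree $2s$ is below the generating degree of $I^{s+1}$ and no minimal generator of $I^s$ divides another.

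For each $k$, multiplication by $m_k$ yields the short exact sequence
\[
0\longrightarrow\bigl(S/(K_{k-1}:m_k)\bigr)(-2s)\xrightarrow{\ \cdot m_k\ }S/K_{k-1}\longrightarrow S/K_k\longrightarrow 0 .
\]
Applying the elementary bound $\mathrm{reg}(B)\le\max\{\mathrm{reg}(A),\mathrm{reg}(C)\}$ for a short exact sequence $0\to A\to B\to C\to0$ gives $\mathrm{reg}(S/K_{k-1})\le\max\{\mathrm{reg}(S/(K_{k-1}:m_k))+2s,\ \mathrm{reg}(S/K_k)\}$. Iterating this over $k=1,\ldots,r$ and using $K_0=I^{s+1}$, $K_r=I^s$ produces
\[
\mathrm{reg}(S/I^{s+1})\le\max\{\mathrm{reg}(S/I^s),\ \mathrm{reg}(S/(K_{k-1}:m_k))+2s:\ 1\le k\le r\}.
\]

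What remains is to pass from the accumulated colon ideals $(K_{k-1}:m_k)=\bigl(I^{s+1}+(m_1,\ldots,m_{k-1})\colon m_k\bigr)$ to the colon ideals $(I^{s+1}:m_k)$ in the statement. For monomial ideals one has $(K_{k-1}:m_k)=(I^{s+1}:m_k)+\sum_{j<k}(m_j:m_k)$, where each $(m_j:m_k)$ is principal, generated by $m_j/\gcd(m_j,m_k)$; observe that $m_j$ itself lies in $(I^{s+1}:m_k)$ because $m_j m_k\in I^{2s}\subseteq I^{s+1}$ for $s\ge1$. In general these extra principal ideals do strictly enlarge the colon, so the substantive point — and the step I expect to be the main obstacle — is the estimate
\[
\mathrm{reg}\bigl(S/(K_{k-1}:m_k)\bigr)\le\max\{\mathrm{reg}(S/(I^{s+1}:m_j)):\ 1\le j\le r\}.
\]
This is precisely where the particular structure of colon ideals of powers of an edge ideal must be exploited: one tracks, via Hochster's formula, how the relevant simplicial complex changes when the monomials $m_j/\gcd(m_j,m_k)$ are adjoined to $I^{s+1}:m_k$, and checks that at the level of reduced homology this amounts only to restricting to induced subcomplexes or to coning over vertices, operations that do not raise the regularity. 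Granting this lemma, one substitutes it into the displayed bound and adds $1$ throughout, turning the inequality into the asserted one for $\mathrm{reg}(I^{s+1})$, $\mathrm{reg}(I^s)$ and $\mathrm{reg}(I^{s+1}:m_k)$.
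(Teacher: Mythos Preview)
The paper does not prove this statement: it is quoted verbatim from Banerjee \cite[Theorem~5.2]{Ba} and used as a black box, so there is no proof in the paper to compare your attempt against.

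On the substance of your sketch: the telescoping short exact sequence argument is the standard route and is correct through the bound
\[
\mathrm{reg}(S/I^{s+1})\le\max\{\mathrm{reg}(S/I^s),\ \mathrm{reg}(S/(K_{k-1}:m_k))+2s:\ 1\le k\le r\},
\]
and you correctly isolate the remaining obstacle, namely replacing the accumulated colons $(K_{k-1}:m_k)$ by the simple colons $(I^{s+1}:m_k)$. Your counterobservation that these can genuinely differ is right (e.g.\ for $I=(xy,yz,zw)$, $s=1$, $m_1=xy$, $m_2=yz$ one has $x\in(K_0:m_2)\setminus(I^2:m_2)$). However, the resolution you propose---``via Hochster's formula \dots restricting to induced subcomplexes or coning over vertices''---is not an argument: adding monomial generators deletes faces from the relevant complexes and can create homology, so nothing you wrote rules out a jump in regularity. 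In Banerjee's treatment the passage from accumulated to simple colons is not done by a generic homological comparison but by analyzing the structure of all the monomials $m_j/\gcd(m_j,m_k)$ through the even-connectedness machinery (what the present paper cites as Lemma~\ref{Banerjee3}); that analysis shows each accumulated colon still polarizes to an edge ideal of the same combinatorial type, which is what controls its regularity. As written, your last paragraph is a placeholder rather than a proof, and that is the genuine gap.
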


As a consequence  of this theorem, Banerjee showed in \cite[Theorem~6.17]{Ba} that for any gap-free and cricket-free graph $G$, the ideal $I(G)^s$ has a linear resolution for  all $s\geq 2$. A {\em cricket} is a graph isomorphic to the graph $G_2$ in Figure~\ref{gcd}, and a graph $G$ is called {\em cricket-free} if $G$  contains no cricket as an induced subgraph. 

Two other classes of graphs which produce edge ideals whose higher powers have linear resolution  were given by Erey. She proved in \cite{Er, Er1} that  $I(G)^s$ has a linear resolution for  all $s\geq 2$ if $G$ is   gap-free and also it is either diamond-free or $C_4$-free. A {\em diamond} is a graph isomorphic to the graph $G_3$ in Figure~\ref{gcd}, and a {\em diamond-free} graph is a graph with no diamond as its induced subgraph. A $C_4$-free graph is a graph which does not contain a $4$-cycle as an induced subgraph; i.e. its complement is gap-free.
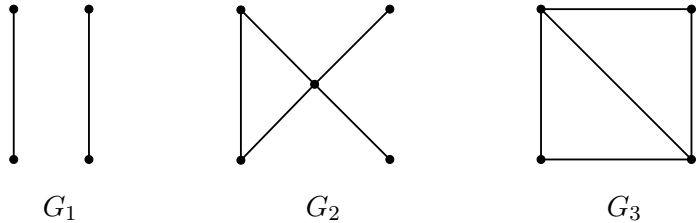
\begin{figure}[ht!]
\hspace{.1cm}
	\begin{tikzpicture}[line cap=round,line join=round,>=triangle 45,x=.5cm,y=.5cm]
	\clip(2.7,5.3) rectangle (22,11.5);
	\draw [line width=.7pt] (3,11)-- (3,7);
	\draw [line width=.7pt] (5,11)-- (5,7);
	\draw [line width=.7pt] (9.04,10.98)-- (9.04,6.98);
	\draw [line width=.7pt] (11,9)-- (9.04,6.98);
	\draw [line width=.7pt] (11,9)-- (9.04,10.98);
	\draw [line width=.7pt] (11,9)-- (13,11);
	\draw [line width=.7pt] (11,9)-- (13,7);
	\draw [line width=.7pt] (17.02,11)-- (17.02,7);
	\draw [line width=.7pt] (21.02,7)-- (17.02,7);
	\draw [line width=.7pt] (21.02,11)-- (21.02,7);
	\draw [line width=.7pt] (21.02,11)-- (17.02,11);
	\draw [line width=.7pt] (17.02,11)-- (21.02,7);
	\draw (3.5,6.3) node[anchor=north west] {$G_1$};
	\draw (10.5,6.3) node[anchor=north west] {$G_2$};
	\draw (18.5,6.3) node[anchor=north west] {$G_3$};
	\begin{scriptsize}
	\draw [fill=black] (3,11) circle (1.5pt);
	\draw [fill=black] (3,7) circle (1.5pt);
	\draw [fill=black] (5,11) circle (1.5pt);
	\draw [fill=black] (5,7) circle (1.5pt);
	\draw [fill=black] (9.04,10.98) circle (1.5pt);
	\draw [fill=black] (9.04,6.98) circle (1.5pt);
	\draw [fill=black] (11,9) circle (1.5pt);
	\draw [fill=black] (13,11) circle (1.5pt);
	\draw [fill=black] (13,7) circle (1.5pt);
	\draw [fill=black] (17.02,11) circle (1.5pt);
	\draw [fill=black] (17.02,7) circle (1.5pt);
	\draw [fill=black] (21.02,7) circle (1.5pt);
	\draw [fill=black] (21.02,11) circle (1.5pt);
	\end{scriptsize}
	\end{tikzpicture}
	\caption{$G_1$ a gap, $G_2$ a cricket, $G_3$ a diamond}
	\label{gcd}
\end{figure}
\begin{rem}\rm \label{part 1 of theorem}
	Clearly, the graphs $\overline{G_{(a)_1}}, \overline{G_{(a)_2}}$ are cricket-free and hence the statement of Theorem~\ref{powers} holds in these two cases using  \cite[Theorem~6.17]{Ba}. Note that these graphs  are gap-free for $t\geq 2$.

	 On the other hand, the graphs  $\overline{G_{(a)_3}}$ and $\overline{G_{(b)}}$ contain crickets for large enough $t$.  Indeed, if $t\geq 3$, then the induced subgraph of  $\overline{G_{(a)_3}}$ on the vertex set $\{1,2,3, 5,t+4\}$, and if $t\geq 2$, then   the induced subgraph of  $\overline{G_{(b)}}$ on  $\{3,4,5,t+4,t+5\}$ are isomorphic to a cricket. These  graphs are not diamond-free in general as well, because for $t\geq 3$, the induced subgraphs on the vertex sets $\{2,4,6,t+4\}$ and $\{3,5,6,t+5\}$ form respectively diamonds in  $\overline{G_{(a)_3}}$ and $\overline{G_{(b)}}$. They are not even $C_4$-free for $t\geq 3$ since ${G_{(a)_3}}$ and ${G_{(b)}}$ contain gaps. 
	 Therefore, when $G\in \{\overline{G_{(a)_3}},\overline{G_{(b)}}\}$ and $t$ is large enough, one cannot take benefit of the results of Banerjee or Erey to deduce Theorem~\ref{powers}. 
	\end{rem}\rm

It is  shown in \cite[Section~6]{Ba} that for the edge ideal $I$ of a simple graph $G$ and the minimal generator $m_k$ of $I^s$, $s\geq1$, the ideal $(I^{s+1}: m_k)$ is a quadratic monomial ideal whose polarization coincides with the edge ideal of a simple graph  with the construction explained in Lemma~\ref{Banerjee3} below. For the details about the  polarization technique, the reader may consult with \cite{HHBook}. Throughout this section, for an edge $e=\{i,j\}$ of a graph $G$ its associated quadratic monomial $x_ix_j$ is   denoted by  ${\bf x}_e$. 
\begin{lem} \cite[Lemma~6.11]{Ba}\label{Banerjee3}
Let $G$ be a simple graph with the edge ideal  $I:=I(G)$, and let $m_k={\bf x}_{e_1}\cdots {\bf x}_{e_s}$ be a minimal generator of $I^s$, where   $e_1,\ldots, e_s$ are some edges of $G$. Then the polarization $(I^{s+1}: m_k)^{pol}$ of the ideal $(I^{s+1}: m_k)$   is the edge ideal of a new graph $G_{e_1\ldots e_s}$ with the following structure:
\begin{itemize}
\item[(1)] $V(G)\subseteq V(G_{e_1\ldots e_s})$, $E(G)\subseteq E(G_{e_1\ldots e_s})$. 
\item[(2)] Any two vertices $u, v$, $u\neq v$, of $G$ that are even-connected with respect to $e_1\cdots e_s$ are connected by an edge in $G_{e_1\ldots e_s}$. 
\item[(3)] For every vertex $u$ which is even-connected to itself with respect to $e_1\cdots e_s$ there is a new vertex $u'\notin V(G)$ which is connected to $u$  in $G_{e_1\ldots e_s}$ by an edge  and not connected to any other vertex $($so $\{u,u'\}$ is a whisker in $G_{e_1\ldots e_s}$$)$.
\end{itemize}
\end{lem}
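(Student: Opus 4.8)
The plan is to analyze the colon ideal $J:=(I^{s+1}:m_k)$ directly and then polarize. The first observation is that $I\subseteq J$: since $m_k\in\mathcal{G}(I^s)$, for every edge $e$ of $G$ we have $m_k{\bf x}_e\in I^{s+1}$, so every generator ${\bf x}_e$ of $I$ already lies in $J$. The second observation is a degree count: $\deg m_k=2s$ while $I^{s+1}$ is generated in degree $2s+2$, so any monomial $M$ with $Mm_k\in I^{s+1}$ satisfies $\deg M\ge 2$; hence $J$ is a proper monomial ideal with no minimal generator of degree $\le 1$.

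The core of the argument is to show that the minimal monomial generators of $J$ have degree exactly $2$, and to identify them. A monomial $x_ux_v$ (with $u=v$ allowed) lies in $J$ precisely when $x_ux_vm_k=x_ux_v{\bf x}_{e_1}\cdots{\bf x}_{e_s}$ is divisible by a product ${\bf x}_{f_0}{\bf x}_{f_1}\cdots{\bf x}_{f_s}$ of $s+1$ edge-monomials of $G$. I would prove that this happens if and only if $\{u,v\}\in E(G)$, or $u$ and $v$ are even-connected with respect to $e_1\cdots e_s$ (again allowing $u=v$ in the second alternative). For the ``if'' direction one takes an even-connecting walk $u=p_0,p_1,\ldots,p_{2\ell+1}=v$ whose even-position edges $\{p_{2i-1},p_{2i}\}$, $1\le i\le\ell$, occur among $e_1,\ldots,e_s$ with the right multiplicities; then the $\ell+1$ odd-position edges $\{p_{2i},p_{2i+1}\}$ together with the $s-\ell$ factors of $m_k$ not used by the walk form a product of $s+1$ edges that divides $x_ux_vm_k$ exactly. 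For the ``only if'' direction one is handed a factorization of the multidegree of (a divisor of) $x_ux_vm_k$ into $s+1$ edges and must reconstruct the alternating walk: starting from an edge $f_j$ containing the marked variable $x_u$, one peels off edges of the $m_k$-list and of the $f$-list alternately, the parity of $\deg(x_ux_vm_k)=2s+2$ guaranteeing that the process closes up at $x_v$. The same book-keeping, run from any variable of a general monomial $M$ with $Mm_k\in I^{s+1}$ and $\deg M\ge 3$, produces a quadratic divisor of $M$ lying in $J$, so $M$ is not a minimal generator; hence $J$ is generated entirely in degree $2$.

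It then remains to polarize. A squarefree quadratic generator of $J$ polarizes to itself: this gives exactly items (1) and (2), namely that $V(G)\subseteq V(G_{e_1\ldots e_s})$, $E(G)\subseteq E(G_{e_1\ldots e_s})$, and that two distinct vertices $u\ne v$ are joined in $G_{e_1\ldots e_s}$ iff $\{u,v\}\in E(G)$ or $u,v$ are even-connected. A generator of the form $x_u^2$---which occurs iff $u$ is even-connected to itself---polarizes to $x_ux_{u,2}$, where $x_{u,2}$ is one fresh variable used uniformly throughout $J$; thus for each such $u$ there is a single new vertex $u'$, adjacent to $u$ and to nothing else, i.e. a whisker, which is item (3). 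Since polarization preserves generator degrees and makes the generators squarefree, $J^{pol}$ is generated by squarefree quadratic monomials, i.e. it is the edge ideal of the graph just described.

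The step I expect to be the main obstacle is precisely the combinatorial heart just sketched (the bulk of \cite[Section~6]{Ba}): proving simultaneously that $J$ has no minimal generator of degree $\ge 3$ and that its quadratic generators are exactly $E(G)$ together with the even-connection pairs. Both reduce to matching a chosen factorization of $x_ux_vm_k$ (or $Mm_k$) into $s+1$ edges against the fixed factorization $m_k={\bf x}_{e_1}\cdots{\bf x}_{e_s}$ while respecting the multiplicities of repeated $e_j$'s, and to checking that the parity of $2s+2$ forces the traced walk to have odd length and to terminate at the second marked vertex. Handling the degenerate case $u=v$ and the case of a repeated edge $e_j$ is where essentially all the care is needed; once this tracing lemma is in place, items (1)--(3) follow formally by polarization.
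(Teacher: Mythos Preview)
The paper does not prove this lemma; it is quoted verbatim from Banerjee's paper \cite[Lemma~6.11]{Ba} and used as a black box, so there is no ``paper's own proof'' to compare against. Your sketch is a faithful outline of Banerjee's argument in \cite[Section~6]{Ba}: the degree count, the identification of the quadratic generators via even-connecting walks, the tracing argument that rules out minimal generators of degree~$\ge 3$, and the final polarization step are exactly the ingredients used there, and you have correctly flagged the walk-tracing with multiplicity bookkeeping as the only genuinely delicate part.
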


In \cite{Ba}, two vertices $u$ and $v$  of a graph $G$ ($u$ may be same as $v$) are said to be {\em even-connected} with respect to an $s$-fold product $e_1\cdots e_s$ in $G$ if there is a path $P=p_0-p_1-\cdots-p_{2k+1}$, $k\geq 1$, in $G$ such that:
\begin{itemize}
\item[(1)] $p_0=u, p_{2k+1}=v.$
\item[(2)] For all $0\leq l \leq k-1$, $\{p_{2l+1}, p_{2l+2}\}=e_i$ for some $1\leq i\leq s$.
\item[(3)] For all $i$, $|\{l:\ 0\leq l\leq k-1, \ \{p_{2l+1}, p_{2l+2} \}=e_i \} | \leq | \{j :\ 1\leq j\leq s, \ e_j=e_i \} |$.
\item[(4)] For all $0 \leq r \leq 2k$, $\{p_r, p_{r+1}\}$ is an edge in $G$.
\end{itemize}

\medskip

	Now we are ready to prove Theorem~\ref{powers} for $G=\overline{G_{(a)_3}}$ with $t>1$.  
	\begin{thm}\label{main G_a3}
		Let $G$ be a   graph on $n\geq 6$ vertices such that  $G_{(a)_3}$ is its complement. Let $I:=I(G)$ be the edge ideal of $G$. Then $I^s$ has a linear resolution for $s\geq 2$.
	\end{thm}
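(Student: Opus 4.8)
The plan is to use Banerjee's inductive machinery from Theorem~\ref{Banerjee1} together with the combinatorial description of the colon ideals in Lemma~\ref{Banerjee3}. Since $I=I(G)$ has almost maximal finite index and $\bar{G}=G_{(a)_3}$ is gap-free (this forces $t\geq 2$, so $G_{(a)_3}$ has no induced $4$-cycle), $I$ itself has a linear resolution by Fr\"oberg's theorem, i.e. $\mathrm{reg}(I)=2$. Arguing by induction on $s$, it suffices by Theorem~\ref{Banerjee1} to show that for every minimal generator $m_k={\bf x}_{e_1}\cdots{\bf x}_{e_s}$ of $I^s$ one has $\mathrm{reg}(I^{s+1}:m_k)\leq 2$, because then $\mathrm{reg}(I^{s+1})\leq\max\{\mathrm{reg}(I^s),\,2+2s\}$ and, using the inductive hypothesis $\mathrm{reg}(I^s)=2s$, we get $\mathrm{reg}(I^{s+1})=2s+2=2(s+1)$, which is equivalent to $I^{s+1}$ having a linear resolution since $I^{s+1}$ is generated in degree $2(s+1)$.

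By Lemma~\ref{Banerjee3} the ideal $(I^{s+1}:m_k)$ has the same regularity as the edge ideal of the graph $G_{e_1\ldots e_s}$ obtained from $G$ by adding all edges between even-connected vertices and a whisker at each vertex even-connected to itself (polarization preserves graded Betti numbers). So the whole problem reduces to: \emph{every graph $H$ obtained from $G=\overline{G_{(a)_3}}$ by Banerjee's even-connection construction has $\mathrm{reg}(I(H))\leq 2$, equivalently $\bar H$ is chordal by Fr\"oberg}. The key structural point is that $G_{(a)_3}$ is a very small, explicitly described graph: it is a cone-like thickening of a single minimal cycle $C$ of length $t+3$ together with the vertex $t+4$ adjacent to three consecutive vertices $u_1,u_2,u_3$, so $G=\overline{G_{(a)_3}}$ is "almost complete" — $V(G)=[t+4]$ and $G$ is the complement of this sparse graph, hence $G$ is a dense graph with a controlled set of non-edges. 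The plan is to show that adding whiskers and edges between even-connected pairs can only make $\bar H$ "more chordal": concretely, I would show that in $H$ the original non-edges of $G$ (which are exactly the edges of $G_{(a)_3}$) get filled in by even-connection, so that $H$ restricted to $V(G)$ is either complete or has complement a forest/chordal graph, and the new whisker vertices contribute only pendant edges, which never destroy chordality of the complement.

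More concretely, the key steps in order would be: (i) reduce via Theorem~\ref{Banerjee1} and induction to bounding $\mathrm{reg}(I^{s+1}:m_k)$; (ii) via Lemma~\ref{Banerjee3} and polarization, translate this to chordality of $\overline{G_{e_1\ldots e_s}}$; (iii) analyze even-connectedness in $G=\overline{G_{(a)_3}}$: because $G$ is dense (complement of a sparse graph), almost every pair of vertices is already adjacent, and I would check directly that any pair $u,v$ which is a non-edge of $G$ — i.e. an edge of $G_{(a)_3}$, so a pair from the cycle $C$ or involving $t+4$ — becomes even-connected with respect to any $e_1\cdots e_s$ with $s\geq1$ (one uses a short path through a common neighbour and the available repeated edge), hence $G_{e_1\ldots e_s}$ restricted to $V(G)$ is the complete graph $K_{t+4}$; (iv) handle the whisker vertices: each whisker $\{u,u'\}$ with $u$ even-connected to itself gives a pendant vertex in $G_{e_1\ldots e_s}$, so $u'$ is a simplicial vertex of $\overline{G_{e_1\ldots e_s}}$ (adjacent in the complement to everything except $u$), and deleting simplicial vertices preserves chordality; after removing all whisker vertices we are left with $\overline{K_{t+4}}$ (edgeless), which is trivially chordal; (v) conclude $\overline{G_{e_1\ldots e_s}}$ is chordal, hence $I(G_{e_1\ldots e_s})$ has linear resolution, so $\mathrm{reg}(I^{s+1}:m_k)=2$, closing the induction.

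The main obstacle I anticipate is step (iii), the even-connectedness bookkeeping: I need to verify carefully that \emph{every} non-edge of $G$ is even-connected with respect to \emph{any} admissible product $e_1\cdots e_s$, including the degenerate cases $s=1$ and the cases where $e_1,\ldots,e_s$ all avoid the relevant part of the cycle — so I would likely split into subcases according to whether the non-edge lies entirely on the cycle $C$, involves the apex vertex $t+4$, and whether $C$ has length exactly $4$ or $\geq 5$. A secondary subtlety is condition (3) in the definition of even-connection (the multiplicity bound on reused edges), which constrains which short even-connecting paths are legal when $s=1$; here it may be cleanest to observe that if a particular non-edge fails to be even-connected for small $s$, the induced subgraph of $G_{e_1\ldots e_s}$ on $V(G)$ still has chordal complement because the missing edges among cycle vertices form a path or a small chordal pattern — so chordality of the complement is robust even without getting the complete graph. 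Once the structural description of $G_{e_1\ldots e_s}$ is pinned down, the chordality check and the final regularity bound are routine.
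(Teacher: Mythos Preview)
Your overall framework---Banerjee's inequality (Theorem~\ref{Banerjee1}), polarization via Lemma~\ref{Banerjee3}, and Fr\"oberg's criterion---matches the paper's. But two concrete errors would derail the argument as written.

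First, a minor but telling slip: $I$ does \emph{not} have a linear resolution. The complement $\bar G=G_{(a)_3}$ is gap-free for $t\ge2$ (no induced $4$-cycle), but it is not chordal---it contains the induced $(t{+}3)$-cycle $C_1$---so Fr\"oberg does not apply and in fact $\mathrm{reg}(I)=3$ (Proposition~\ref{Bettis of almost}). Fortunately this does not break the induction, since $\max\{3,\,2{+}2\}=4$ still gives $\mathrm{reg}(I^2)\le 4$; but you should state the base case correctly.

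Second, and more seriously, your step~(iii) plan and your step~(iv) whisker argument both fail. The claim that \emph{every} non-edge of $G$ becomes even-connected with respect to \emph{any} $e_1\cdots e_s$ is false already for $s=1$: e.g.\ with $e_1=\{4,t{+}4\}$ the non-edge $\{1,2\}$ is not even-connected, since $\{1,t{+}4\}$ and $\{2,t{+}4\}$ are both edges of $G_{(a)_3}$, hence non-edges of $G$. And in step~(iv), a whisker vertex $u'$ is \emph{not} simplicial in the complement: its neighbourhood there is $V\setminus\{u,u'\}$, which is a clique only in trivial cases. So one cannot simply peel off whiskers. The paper avoids both pitfalls by invoking \cite[Lemmas~6.14,~6.15]{Ba}, which say that every induced cycle of $\overline{G_{e_1\cdots e_s}}$ is already an induced cycle of $\bar G$. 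Since the only minimal cycles of $G_{(a)_3}$ are $C_1=1\text{--}2\text{--}\cdots\text{--}(t{+}3)\text{--}1$ and $C_2=1\text{--}(t{+}4)\text{--}3\text{--}\cdots\text{--}(t{+}3)\text{--}1$, it then suffices to exhibit, for each $e_1$, \emph{one} edge of $C_1$ and one of $C_2$ that become edges of $G_{e_1\cdots e_s}$; the paper does this by a short explicit case check on $e_1$, producing a path $k\text{--}i\text{--}j\text{--}l$ in $G$ with $\{i,j\}=e_1$ and $\{k,l\}\in E(C_r)$. Your fallback remark is pointing in this direction, but without Banerjee's lemmas you have no control over new induced cycles (including those through whisker vertices), so the chordality check is not ``routine'' until you invoke them.
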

	
	\begin{proof}
		Note that $t+4=n\geq 6$ implies that $t>1$. We first show  that for any 
		$s\geq 1$	and any 	$s$-fold product $e_1\cdots e_s$ of the edges in $G$, the  graph $\overline{G_{e_1\cdots e_s}}$ is chordal, where $G_{e_1\cdots e_s}$ is a simple graph explained in Lemma~\ref{Banerjee3} with the edge ideal $I(G_{e_1\cdots e_s})=(I^{s+1}:\ {\bf x}_{e_1}\cdots {\bf x}_{e_s})^{pol}$. 
		
		Since by \cite[Lemmas~6.14, 6.15]{Ba}, any induced cycle of $\overline{G_{e_1\cdots e_s}}$ is an induced cycle of $\bar{G}$, we conclude that if $\overline{G_{e_1\cdots e_s}}$ contains an induced cycle $C$ of length $>3$,  then  $C\in \{C_1, C_2\}$, where $C_1=1-2-\cdots-(t+3)-1$ and $C_2=1-(t+4)-3-4-\cdots-(t+3)-1$. Thus, in order to prove that $\overline{G_{e_1\cdots e_s}}$ is  chordal, we need to show that 
		$C_1,C_2$ are not induced cycles in $\overline{G_{e_1\cdots e_s}}$  for $s\geq 1$.
		
		We claim that there exist $k,l\in V(G_{e_1\cdots e_s})$ such that $\{k,l\}\in E(G_{e_1\cdots e_s})\cap E(C_r)$, $r=1,2$. It follows that $C_r$ is not a subgraph of $\overline{G_{e_1\cdots e_s}}$, as desired.
		
		\medspace
		\textit {Proof of the claim:}  
		Let $e_1=\{i,j\}$ with $i<j$ and let $s\geq 1$.  We choose  $\{k,l\}\in  E(C_1)$ as follows:
		\begin{itemize}
			\item[$(a)$] If $e_1=\{4,t+4\}$, then let $k=1$ and $l=t+3$;
			\item[$(b)$] if   $e_1=\{1,t+2\}$, then let $k=3$ and $l= 2$;
			\item[$(c)$]  if $e_1=\{2,t+3\}$, then let $k=4$ and $l= 3$;
			\item[$(d)$] otherwise, let $k=\overline{i-2}$ and $l=\overline{i-1}$.
		\end{itemize}
		Since $C_2$ is obtained from $C_1$ by replacing  $2$ with $t+4$, in order to find  $\{k,l\}\in E(C_2)$, we choose $\{k,l\}\in E(C_2)$ as suggested  in $(a)-(d)$  with an extra condition that if $\{k,l\}$ is obtained from $(b),(d)$ and it contains $2$,   then  we replace $2$ with $t+4$ in this pair.

		\medspace
		By the above choices of $k,l$, although $\{k,l\}\notin E(G)$, we have $\{k,i\}, \{j, l\}\in E(G)$.  It follows that $k-i-j-l$ is a path in $G$ and hence, by definition,  $k$ and $l$ are even-connected with respect to $e_1\cdots e_s$. Therefore $\{k,l\}\in E(G_{e_1\cdots e_s})$.   
		This completes the~proof~of~the~claim.
		
		\medspace
		Now since  $\overline{G_{e_1\cdots e_s}}$ is chordal for $s\geq 1$, by \cite[Theorem~1]{Fr}, $I(G_{e_1\cdots e_s})$ has a $2$-linear resolution for $s\geq 1$. It follows that for any choice of the edges  $e_1,\ldots, e_s$ of $G$ one has 
		\begin{align*}\mathrm{reg}((I^{s+1}:\ {\bf x}_{e_1}\cdots {\bf x}_{e_s}))=\mathrm{reg}((I^{s+1}:\ {\bf x}_{e_1}\cdots {\bf x}_{e_s})^{pol})=\mathrm{reg}(I(G_{e_1\cdots e_s}))=2.
		\end{align*}
		The first equality follows from \cite[Corollary~1.6.3]{HHBook}. 
		By Proposition~\ref{Bettis of almost} we have $\mathrm{reg}(I)=3$.  Theorem~\ref{Banerjee1} implies that  $\mathrm{reg}(I^2)\leq 4$. Since $I^2$ is generated in degree $4$ we conclude that $\mathrm{reg}(I^2)=4$. 
		Now induction on $s>1$ and using Theorem~\ref{Banerjee1} yield the assertion. 
	\end{proof}

\medskip
Now it  remains to prove Theorem~\ref{powers} for $\overline{G_{(b)}}$.  The crucial point in the proof of Theorem~\ref{powers} for $\overline{G_{(a)_3}}$ was  to show that $\mathrm{reg}((I(\overline{G_{(a)_3}})^{s+1}:\ m_k))=2$ for all minimal generators $m_k$ of $I(\overline{G_{(a)_3}})^{s}$. Having proved this statement, we deduced that  the upper bound of $\mathrm{reg}(I(\overline{G_{(a)_3}})^{s+1})$ in   Theorem~\ref{Banerjee1} is  $2s+2$ and hence the desired conclusion was followed. 
The same  method will not work for $\overline{G_{(b)}}$. Indeed, 
  computations by  {\em Macaulay~2}, \cite{M2}, shows that  $\mathrm{reg}((I(\overline{G_{(b)}})^{s+1}:\ m_k))=3$, where $s\geq 1$ and $m_k=x_{t+5}^sx_{t+4}^s$ is a minimal generator of $I(\overline{G_{(b)}})^{s}$. Hence the upper bound of $\mathrm{reg}(I(\overline{G_{(b)}})^{s+1})$ in   Theorem~\ref{Banerjee1} is at least $2s+3$ which is greater than the degree of the generators of $I(\overline{G_{(b)}})^{s+1}$ and consequently one cannot deduce that this ideal has a linear resolution only by computing the upper bound in Theorem~\ref{Banerjee1}. Therefore 
  in order to prove Theorem~\ref{powers} 	 for $\overline{G_{(b)}}$ we  need some other tool. This tool is provided in the following result of Dao et al.

\begin{lem}\label{Dao}{\cite[Lemma~2.10]{DHS}}
	Let $I\subset S$ be a monomial ideal, and let $x$ be a variable appearing in some generator of $I$.  Then $$\mathrm{reg}(I)\leq \max\{\mathrm{reg}((I:x)) + 1,\mathrm{reg}(I+(x))\}.$$ Moreover, if $I$ is squarefree, then $\mathrm{reg}(I)$ is equal to one of these terms.
\end{lem}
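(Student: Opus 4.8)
The plan is to extract both statements from the $\mathbb{Z}^n$-graded short exact sequence
$$0\longrightarrow (S/(I:x))(-1)\xrightarrow{\ \cdot x\ } S/I\longrightarrow S/(I+(x))\longrightarrow 0,$$
which is exact because $\overline m\mapsto \overline{xm}$ embeds $(S/(I:x))(-1)$ into $S/I$ with image $((x)+I)/I$ and cokernel $S/(I+(x))$. For any short exact sequence $0\to A\to B\to C\to 0$ the long exact sequence in $\mathrm{Tor}^S_\bullet(-,\mathbb{K})$ gives $\mathrm{reg}(B)\le\max\{\mathrm{reg}(A),\mathrm{reg}(C)\}$, since a nonzero $\beta_{i,j}(B)$ forces $\beta_{i,j}(A)\ne0$ or $\beta_{i,j}(C)\ne0$. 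Inserting $\mathrm{reg}((S/(I:x))(-1))=\mathrm{reg}(S/(I:x))+1$ and using $\mathrm{reg}(J)=\mathrm{reg}(S/J)+1$ for a nonzero proper ideal $J$ (the degenerate case $x\in I$, where $I:x=S$ and $I+(x)=I$, being trivial) yields the inequality $\mathrm{reg}(I)\le\max\{\mathrm{reg}(I:x)+1,\ \mathrm{reg}(I+(x))\}$.

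For the squarefree sharpening I would pass to Stanley--Reisner combinatorics. Writing $I=I_\Delta$ and $x=x_v$, one has $I+(x)=I_{\mathrm{del}_\Delta(v)}$ and $I:x=I_{\mathrm{star}_\Delta(v)}$, and Hochster's formula gives $\mathrm{reg}(I_\Gamma)=2+\mu(\Gamma)$ where $\mu(\Gamma):=\max\{k:\widetilde{H}_k(\Gamma_W;\mathbb{K})\ne0\text{ for some }W\}$. The induced subcomplexes of $\mathrm{del}_\Delta(v)$ are exactly the $\Delta_W$ with $v\notin W$, while those of $\mathrm{star}_\Delta(v)$ are either acyclic (when they contain $v$, being cones) or equal to $(\mathrm{link}_\Delta(v))_W$; so, putting $a:=\mu(\mathrm{del}_\Delta(v))$ and $b:=\mu(\mathrm{link}_\Delta(v))$, the claim reduces to $\mu(\Delta)\in\{a,\,b+1\}$.

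To prove this I would use the Mayer--Vietoris sequences of the decompositions $\Delta_W=\Delta_{W\setminus v}\cup\bigl(\{v\}\ast(\mathrm{link}_\Delta(v))_{W\setminus v}\bigr)$ for $v\in W$, in which the cone piece is contractible. One direction is immediate: $a\le\mu(\Delta)$ because the restrictions of $\mathrm{del}_\Delta(v)$ sit among those of $\Delta$. For the bound $\mu(\Delta)\le\max\{a,b+1\}$ one runs a nonzero class in $\widetilde{H}_k(\Delta_W)$ through Mayer--Vietoris: it either survives in $\widetilde{H}_k(\Delta_{W\setminus v})$ (so $a\ge k$) or maps injectively to $\widetilde{H}_{k-1}((\mathrm{link}_\Delta(v))_{W\setminus v})$ (so $b+1\ge k$). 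Finally, if $\mu(\Delta)>a$, a Mayer--Vietoris argument in the other direction — lifting a top class of some $(\mathrm{link}_\Delta(v))_{W'}$ along $\widetilde{H}_{b+1}(\Delta_{W'\cup v})\to\widetilde{H}_b((\mathrm{link}_\Delta(v))_{W'})\to\widetilde{H}_b(\Delta_{W'})$ — shows $\mu(\Delta)=b+1$: otherwise the class would descend to a $v$-free restriction of $\Delta$, forcing $a\ge b\ge\mu(\Delta)$, a contradiction. Translating back, $\mathrm{reg}(I)=2+\mu(\Delta)$ equals $\mathrm{reg}(I+(x))$ or $\mathrm{reg}(I:x)+1$.

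The main obstacle is precisely this equality. The seductive shortcut — hoping the connecting maps in the $\mathrm{Tor}$ long exact sequence vanish, so that $\beta_{i,j}(S/I)=\beta_{i,j-1}(S/(I:x))+\beta_{i,j}(S/(I+(x)))$ — is \emph{false} already for $I=(xy)$ and $x$ (there $\beta_{1,1}(S/I)=0$ but the would-be sum is $1$), so one genuinely needs the Mayer--Vietoris bookkeeping above to locate the homological degree computing $\mathrm{reg}(S/I)$. The delicate point within that is ruling out a value of $\mu(\Delta)$ strictly between $a$ and $b+1$; everything else — the three Stanley--Reisner identifications, acyclicity of the cone, and the $\mathrm{reg}(J)$ versus $\mathrm{reg}(S/J)$ translation — is routine.
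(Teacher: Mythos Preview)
The paper does not prove this lemma; it is quoted verbatim from \cite[Lemma~2.10]{DHS} and used as a black box. So there is no ``paper's own proof'' to compare against.

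Your argument is correct. The inequality via the short exact sequence $0\to (S/(I:x))(-1)\to S/I\to S/(I+(x))\to 0$ and the $\mathrm{Tor}$ long exact sequence is exactly the standard proof (and is how Dao--Huneke--Schweig argue it). For the squarefree equality, your Hochster/Mayer--Vietoris bookkeeping is sound: the identifications $I+(x_v)=I_{\mathrm{del}_\Delta(v)}$ and $I:x_v=I_{\mathrm{star}_\Delta(v)}$ are correct, the induced subcomplexes of the star are cones (hence acyclic) when they contain $v$ and restrictions of the link otherwise, and your trichotomy argument for $\mu(\Delta)\in\{a,b+1\}$ goes through. The potentially slippery step---ruling out $a<\mu(\Delta)\le b$---is handled correctly: if $\mu(\Delta)\le b$ then $\widetilde H_{b+1}(\Delta_{W'\cup v})=0$, so the connecting map vanishes and the top link class injects into $\widetilde H_b(\Delta_{W'})$, forcing $a\ge b\ge\mu(\Delta)>a$. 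Your warning that the na\"ive Betti-number splitting fails is also well taken.

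One cosmetic point: the phrase ``survives in $\widetilde H_k(\Delta_{W\setminus v})$'' is directionally backwards (the map goes from $\widetilde H_k(\Delta_{W\setminus v})$ into $\widetilde H_k(\Delta_W)$, so you mean ``lies in the image of''); the logic is unaffected.
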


We will apply this result for $I:=I(\overline{G_{(b)}})^{s+1}$, $s\geq 1$, and $x:=x_{t+5}$. In
 Theorem~\ref{linquo of square} we will compute the regularity of the ideal $(I(\overline{G_{(b)}})^{s+1}:x_{t+5})$, $s\geq 1$,  by showing that it has linear quotients.
 Recall that
a graded ideal $I$ is said to have {\em linear quotients} if there exists a  homogeneous  generating set of $I$, say $\{f_1,\ldots,f_m\}$, such that the colon ideal $\left((f_1,\ldots,f_{i-1}):f_i\right)$ is generated by variables for all $i>1$. By \cite[Theorem~8.2.1]{HHBook} equigenerated ideals with linear quotients have a linear resolution.

In the next result,  Proposition~\ref{step1}, we provide a step of the proof of  Theorem~\ref{linquo of square} which is a bit long yet easy to follow. In the proof of this proposition  and also Theorem~\ref{linquo of square} we need to order the generators of the given ideals. To this end we should first order the multisets of edges of the associated graphs. We will use the following order in both proofs:

\medspace
{\em	 Let $G$ be a simple graph. For $e=\{i,j\}\in E(G)$ with $i<j$ and $e'=\{i',j'\}$ with $i'<j'$, we let $e<e'$ if either $i<i'$ or  $i=i'$ with $j<j'$.  	
	Let $r\geq 1$. We denote by 	$\mathbf{e}:=(e_{i_1},\ldots,e_{i_{r}})$ the multiset $\{e_{i_1},\ldots,e_{i_{r}}\}$ of the edges of $G$, where $e_{i_1}\leq\cdots\leq e_{i_{r}}$. If $\mathbf{e'}:=(e_{i'_1},\ldots,e_{i'_{r}})$ is another ordered multiset in $E(G)$ of the same size $r$, we let 
	$\mathbf{e}\leq \mathbf{e'}$  if either $\mathbf{e}=\mathbf{e'}$ or else there exists $1\leq j\leq r$ such that $e_{i_l}=e_{i'_l}$ for all $l<j$ and $e_{i_j}<e_{i'_j}$.}
	
	\medspace
	For the ordered multiset $\mathbf{e}:=(e_{i_1},\ldots,e_{i_{r}})$ of the edges of the graph $G$, we denote by $\x{\mathbf{e}}$ the monomial $\x{e_{1}}\cdots\x{e_{r}}$.
	Moreover,  we denote by $\supp(m)$  the set of all variables dividing the monomial $m\in S$ and also  denote by $\deg_{m}x_{i}$   the largest integer $d$ such that  $x_i^d$ divides $m$. We use the notation $m|m'$ ($m\centernot|m'$ resp.) when a monomial $m$ divides (does not divide resp.) a monomial $m'$.  
	
\begin{prop}\label{step1}
Let $C=1-2-\cdots-(t+3)-1$, $t\geq 1$, be a cycle graph and let $t+4$ be a vertex not belonging to $C$. Then for $s\geq 0$ the ideal $L=I(\bar{C})^{s}(x_3,\ldots,x_{t+4})$ has linear quotients. 
\end{prop}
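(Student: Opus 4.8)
The plan is to exhibit an explicit homogeneous generating set of $L=I(\bar C)^{s}(x_3,\ldots,x_{t+4})$ and a total order on it for which every colon ideal $(f_1,\ldots,f_{i-1}):f_i$ is generated by variables. Every minimal generator of $L$ has the form $m=\x{\mathbf{e}}\cdot x_k$, where $\mathbf{e}=(e_{i_1},\ldots,e_{i_s})$ is an ordered multiset of edges of $\bar C$ and $3\le k\le t+4$. We will order these generators first by the ordered multiset $\mathbf{e}$ (using the lexicographic order $\le$ on multisets of edges introduced just above the statement), and then, among generators sharing the same $\mathbf{e}$, by the index $k$ in increasing order. First I would record the basic combinatorial fact that the edges of $\bar C$ are exactly the pairs $\{a,b\}\subseteq[t+3]$ with $b\ne \overline{a\pm 1}$, and that the vertices $3,\ldots,t+4$ that we are multiplying by are the "interesting" ones: $t+4$ is isolated in $\bar C$, while $3,\ldots,t+3$ each lie on many edges of $\bar C$.

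The heart of the argument is the following claim: if $m=\x{\mathbf{e}}x_k$ and $m'=\x{\mathbf{e'}}x_{k'}$ are two generators with $m'$ earlier than $m$ in our order, then the variable $v:=m'/\gcd(m,m')$ — or, more precisely, some single variable dividing $m'/\gcd(m,m')$ — has the property that $(m/v')\cdot(\text{something})$ is again a generator earlier than $m$, where $v'\mid m$ is the variable we divide out of $m$. Concretely, one shows $\bigl((m'):m\bigr)$ is generated by a variable $x_a$ with $x_a m / x_b \in L$ for a suitable variable $x_b\mid m$, and $x_am/x_b$ precedes $m$. This is the standard criterion for linear quotients (cf. \cite[Theorem~8.2.1]{HHBook} and its proof). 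I would split the verification into cases according to whether the discrepancy between $m$ and $m'$ occurs in the "edge part" $\mathbf{e}$ versus the "extra variable" $x_k$:

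\emph{Case 1: $\mathbf{e}=\mathbf{e'}$ and $k'<k$.} Then $m/\gcd(m,m')=x_k$ and $m'/\gcd(m,m')=x_{k'}$, and we must produce a variable $x_a$ and an earlier generator differing from $m$ by replacing $x_a$ with $x_{k'}$. Here we use the structure of $\bar C$: when $k\le t+3$, the vertex $k'$ (or, if $k'=t+4$, a suitable vertex in $\{3,\dots,t+3\}$) can be connected to $k$ through the edges of $\mathbf{e}$ or through $\bar C$, producing the required swap; the reductions in Observation~(O-2)/(O-4) style arguments about cycles in $\bar C$ are the relevant input.

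\emph{Case 2: $\mathbf{e}\ne\mathbf{e'}$.} Let $j$ be the first position where $e_{i_j}<e_{i'_j}$. A monomial-combinatorics lemma — essentially the exchange property for powers of an edge ideal, which is implicit in Banerjee's even-connectedness machinery (Lemma~\ref{Banerjee3}) — lets us swap a single vertex of $e_{i_j}$ out and a vertex coming from $e_{i'_j}$ or from later edges of $\mathbf{e'}$ in, obtaining an earlier generator that agrees with $m$ off one variable. One must check that after the swap the multiset is still a valid product of edges of $\bar C$ and that the extra-variable slot $x_k$ can be kept or, if necessary, adjusted while staying earlier in the order; the non-edges $\{a,\overline{a\pm1}\}$ of $\bar C$ are the only obstruction, and the freedom of choosing which vertex of $e_{i_j}$ to remove handles them.

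I expect \emph{Case 2} — controlling the exchange inside the $s$-fold edge product while respecting that $\bar C$ is a complement of a cycle (so has a very rigid set of non-edges) — to be the main obstacle, because one has to choose the exchanged vertices carefully to avoid creating the forbidden pairs $\{a,\overline{a+1}\}$ and simultaneously ensure the resulting generator is strictly earlier in the lexicographic order on multisets. Once the claim is established in both cases, linear quotients of $L$ follows immediately, and (although not needed for this proposition) \cite[Theorem~8.2.1]{HHBook} would then give that $L$ has a linear resolution since it is generated in the single degree $2s+1$.
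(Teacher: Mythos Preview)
Your overall plan---order generators by the multiset of edges and then by the extra index $k$, and verify the linear-quotient criterion by producing, for each earlier $m_q$, some $m_p<m_l$ with $m_{p,l}$ a single variable dividing $m_{q,l}$---is exactly the paper's approach. But the proposal has two genuine gaps.

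First, you never address non-uniqueness of the presentation $m=\x{\mathbf e}\,x_k$. The same monomial can arise from several pairs $(\mathbf e,k)$, so your order on generators is not well-defined as stated. The paper fixes this by always taking the \emph{smallest} admissible multiset $\mathbf e$ for each generator; this convention is then used repeatedly in the exchange arguments (many steps derive a contradiction from ``otherwise we would get a smaller presentation of $m_l$''). Without it, Case~2 cannot even be formulated cleanly, and several of the swaps you envision would not guarantee that the new generator is strictly earlier.

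Second, your Case~2 sketch is too thin and leans on the wrong tool. Banerjee's even-connectedness lemma concerns the structure of $(I^{s+1}:m)$ and produces new \emph{edges} in an auxiliary graph; it does not give the single-variable exchange you need inside $I(\bar C)^s(x_3,\dots,x_{t+4})$. In the paper the actual mechanism is a hands-on case analysis: one first reduces to $\mathbf e_q\cap\mathbf e_l=\emptyset$, then distinguishes whether the first discrepancy between $e'_1$ and $e_1$ occurs in the smaller or the larger vertex, and in the latter case tracks how the offending vertex propagates through further edges of $\mathbf e_l$ and $\mathbf e_q$. Crucially, the paper does \emph{not} always construct $m_p$ directly; in many subcases it instead builds $m_{q'}<m_q$ with $m_{q',l}\mid m_{q,l}$ and invokes induction on $q$. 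This inductive device is absent from your proposal, and without it several of the ``forbidden pair $\{a,\overline{a\pm1}\}$'' obstructions you anticipate cannot be resolved. (Incidentally, your Case~1 is trivial: when $\mathbf e=\mathbf e'$ and $k'<k$ one has $m'/\gcd(m,m')=x_{k'}$ already, so $m_p=m'$ works with no swap needed.)
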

\begin{proof} 
For $s=0$ the assertion is obvious. Suppose $s\geq 1$. 
We order the edges of $G:=\bar{C}$ as described above. 
	Each element $m$ in the minimal generating set $\mathcal{G}(L)$ of $L$ can be written as  $m=\x{\e{}}x_k$,
	 where $\e{}=(e_{1},\ldots, e_{{s}})$ is an ordered multiset of the edges of $\bar{C}$ and $x_{k}\in \{x_3,\ldots,x_{t+4}\}$.  
	Note that there may be different multisets associated to $m$ and hence different presentations of $m$ as above. 
	In this case we consider the presentation of $m$ whose associated ordered multiset of the edges is the smallest. This means that if there is another presentation  of $m$ as $\x{\e{}'}x_{k'}$ with $\e{}\neq \e'{}$, then   we consider the presentation $\x{\e{}}x_k$ for $m$ if $\e{}<\e{}'$. By this setting, each minimal generator $m_l$ of $L$ has a unique {\em smallest} presentation $m_l=\x{\e{l}}x_k$, where $\e{l}$ denotes the smallest multiset of the edges associated to $m_l$.  
	
Now we order the generators of $L$ as follows: for $m_q,m_l\in\mathcal{G}(L)$ with $m_q=\x{\e{q}}x_{k'}$, $m_l=\x{\e{l}}x_{k}$, we let $m_q<m_l$ if either $\mathbf{e}_q<\mathbf{e}_{l}$ or $\mathbf{e}_q=\mathbf{e}_{l}$ with  $k'<k$. 

Suppose $\mathcal{G}(L)=\{m_1,\ldots,m_r\}$ with $m_1<\!\cdots\!<m_r$. We show that for any $m_l\in\!\mathcal{G}(L)$ with $l\!>1$, the ideal $\left((m_1,\ldots, m_{l-1}):m_l\right)$ is generated by some variables. Set $J_l:=(m_1,\ldots, m_{l-1})$. By \cite[Proposition~1.2.2]{HHBook}, the ideal $(J_l:m_l)$ is generated by the elements of the  set $\{m_q/\gcd(m_q,m_l):\ 1\leq q\leq l-1\}$. Let $m_{q,l}:=m_q/\gcd(m_q,m_l)$ for $m_q<m_l$. Suppose $m_l=\mathbf{x}_{\mathbf{e}_l} x_k$, $m_q=\mathbf{x}_{\mathbf{e}_q} x_{k'}$ with $\mathbf{e}_l:=(e_{1},\ldots,e_{s})$,  $\mathbf{e}_q:=(e'_{1},\ldots,e'_{s})$ and $3\leq k, k'\leq t+4$. Let $e_{i}=\{a_i, b_i\}$, $e'_{i}=\{a'_i, b'_i\}$ with $1\leq a_i<b_i-1\leq t+2$  and $1\leq a'_i<b'_i-1\leq t+2$ for $1\leq i\leq s$. 

In order to show that $(J_l:m_l)$ is generated by variables, we show that for each $q<l$, there exists $p<l$ such that $m_{p,l}$ is of degree one and it divides $m_{q,l}$. If $\deg m_{q,l}=1$, then we set $p:=q$ and so we are done. Assume that $\deg m_{q,l}>1$. First suppose $q=1$. Then $m_q=x_1^sx_3^{s+1}$. If $x_1| m_{q,l}$, then there exists $1\leq i\leq s$ with $1\notin e_i=\{a_i,b_i\}$. If $b_i\neq t+3$, 
then set $e:=\{1,b_i\}$ and if $b_i=t+3$ with $a_i\neq 2$, set  $e:=\{1,a_i\}$. Now set  $m_p:=(\x{\e {l}}\x{e}/\x{e_i})x_k$.
 Since $e<e_i$ we have  $p<l$. Moreover, $m_{p,l}=x_1$ and so we are done in this case.   Suppose $e_i=\{2,t+3\}$ for all $e_i\in \e{l}$ with $1\notin e_i$. It follows that $x_3|m_{q,l}$. If $k\neq 3$, then set $m_p:=\x{\e{l}}x_{3}$. Since $3<k$ we have $p<l$ and $m_{p,l}=x_3$.  If $k=3$, then set $e:=\{1,3\}$  and $m_p:=(\x{\e {l}}\x{e}/\x{e_i})x_{t+3}$.  
Now suppose $x_1\centernot| m_{q,l}$. Then $x_3|m_{q,l}$ and $1\in e_i$ for all $1\leq i\leq s$, and since $\deg m_{q,l}>1$ there exists $e_i\in\e{l}$ with $3\notin e_i$. Set $e:=\{1,3\}$  and $m_p:=(\x{\e {l}}\x{e}/\x{e_i})x_k$.  So we are done if $q=1$. Now suppose $q>1$ and for all $q'<q$ there is $p'<l$ with $\deg m_{p',l}=1$ and $m_{p',l}|m_{q',l}$. We prove the assertion by induction on $q$. 

Suppose there exist $e'_i\in \mathbf{e}_q$ and $e_j\in\mathbf{e}_l$  with $e'_i=e_j$. 
The monomials  $m'_{l}:= m_l/\mathbf{x}_{e_j}, m'_q:=m_q/\mathbf{x}_{e'_i}$ belong to  $I(\bar{C})^{s-1}(x_3,\ldots,x_{t+4})$ and $m'_q<m'_l$ and  $m_{q,l}=m'_q/\gcd(m'_q,m'_l)$.  If there exists $m'_p\in I(\bar{C})^{s-1}(x_3,\ldots,x_{t+4})$ with $m'_p<m'_l$, where $m'_p/\gcd(m'_p,m'_l)$ is of degree one  dividing $m'_q/\gcd(m'_q,m'_l)$, then setting $m_p:=m'_p\mathbf{x}_{e_i}$ one has $m_p\in J_l$ and  $\deg m_{p,l}=1$, where $m_{p,l}$ divides $m_{q,l}$, as desired. So it is enough to prove the assertion for $m'_q,m'_l$. Consequently, from now on  we may suppose that $\mathbf{e}_q\cap \mathbf{e}_l=\emptyset$.  In particular, $\mathbf{e}_q\neq\mathbf{e}_l$ and hence  
 $\mathbf{e}_q<\mathbf{e}_l$. Since $\mathbf{e}_q, \mathbf{e}_l$ do not share an  edge, it follows that $e'_1<e_1$ which means that either $a'_1<a_1$ or $a'_1=a_1$ with $b'_1<b_1$. 

{\em Case} (i): $a'_1<a_1$. If $a'_1=k$, then $3\leq k<a_1<b_1-1$ implies that $e:=\{k, b_1\}\in E(\bar{C})$ with $e<e_1$ and hence by interchanging 
$x_{a_1}$ in $\x{e_{1}}$ and $x_k$ we get 
a smaller presentation for $m_l$, a contradiction.  
Therefore $a'_1\neq k$. Note that $a'_1<a_1\leq a_i<b_i$ for all $i$. Thus $x_{a'_1}\centernot|m_l$ which implies that $x_{a'_1}|m_{q,l}$. 

Since $a'_1<b_i-1$ for all $i$, we have $e:=\{a'_1,b_i\}\in E(\bar{C})$, unless $\{a'_1,b_i\}=\{1,t+3\}$. If $\{a'_1,b_i\}=\{1,t+3\}$ for  all $i$ and if there exists $i$ with $a_i\neq 2$, then set $e:=\{a'_1,a_i\}$.  
 In both cases we have  $e<e_i$ and hence $m_p:= (\x{\e {l}}\x{e}/\x{e_i})x_k<m_l$ with $m_{p,l}=x_{a'_1}$. Suppose $a'_1=1$ and $e_i=\{2,t+3\}$ for  all $i$. We have $k\in\{3,t+3,t+4\}$, because otherwise by interchanging $x_{t+3}$ in $\x{e_i}$ and $x_k$ we get  a smaller presentation for $m_l$, a contradiction.  If $k=3$, then set $e:=\{1,3\}$ and $m_p:=(\x{\e {l}}\x{e}/\x{e_i})x_{t+3}$.  If $k\in\{t+3,t+4\}$, since $b'_1\notin\{2,t+3,t+4\}$, we have $x_{b'_1}\centernot|m_l$ and hence $x_{b'_1}|m_{q,l}$. Set $m_p:=\x{\e{l}}x_{b'_1}$.

\medspace
{\em Case} (ii): $a'_1=a_1$ and $b'_1<b_1$. If $x_{b'_1}|m_{q,l}$ then set  $m_p:=(\x{\e {l}}\x{e'_1}/\x{e_1})x_k$. Suppose $x_{b'_1}\centernot|m_{q,l}$ and hence $x_{b'_1}|m_l$. If $k=b'_1$, then interchanging $x_{b_1}$ in $\x{e_1}$ and $x_k=x_{b'_1}$ will result in  a smaller presentation for $m_l$, a contradiction.  Therefore,   $k\neq b'_1$ and hence $b'_1\in e_h$ for some $e_h\in\e{l}$. Thus $e_h\neq e_1$. If $b$ is another vertex of $e_h$, then $b\in\{b_1,b_1-1,\overline{b_1+1}\}$ since otherwise we get a smaller presentation of $m_l$ by interchanging $b_1$ in $e_1$ and $b'_1$ in $e_h$, a contradiction. 

Since $\deg m_{q,l}>1$, we have $\supp(m_{q,l})\neq \{x_{t+4}\}$. Thus $x_a|m_{q,l}$ for some  $a\neq t+4$.  If  $b_1\notin \{a,\overline{a-1},\overline{a+1}\}$ ($b\notin \{a,\overline{a-1},\overline{a+1}\}$ resp.), then set $e:=\{a,b_1\}$ ($e:=\{a,b\}$ resp.) and    $m_p:=(\x{\e {l}}\x{e'_1}\x{e}/(\x{e_1}\x{e_h}))x_k$. Suppose  
$b_1, b\in \{a,\overline{a-1},\overline{a+1}\}$ for all $a\neq t+4$ with $x_a|m_{q,l}$. 

 If $b_1= \overline{a+1}$, since $b_1>1$ we have $a\neq t+3$ and   $b_1=a+1$. Since $a_1<b'_1-1<b_1-1=a$  one has $e:=\{a_1,a\}\in E(\bar{C})$. Set $m_p:=(\x{\e{l}}\x{e}/\x{e_1})x_k$.  Now suppose $b_1\in \{a,\overline{a-1}\}$ for all $a$ with $x_a|m_{q,l}$ and $a\neq t+4$. 
 If $a=1$, then since $a'_1=a_1$ is the smallest vertex in $\e{q}$ one has $a_1=1$ and $b_1\in\{1,t+3\}$ which is a contradiction. If $a\in\{2,3\}$, then $b_1\in \{1,2,3\}$ which is again a contradiction because $1\leq a_1<b'_1-1<b_1-1$. Therefore, $a\geq 4$ for all $a$ with $x_a|m_{q,l}$. In particular, $\overline{a-i}=a-i$ for $a\neq t+4$ and $i=1,2,3$.   If $a<k$, then set $m_p:=\x{\e{l}}x_a$ and so we are done. Suppose $a\geq k$ for all $a$ with $x_a|m_{q,l}$. 
 Since $\deg m_{q,l}>1$, there exists $a$ with  $x_a\in\supp(m_{q,l})\cap \supp(\x{\e{q}})$. Suppose $e'_{i_1}=\{a,c\}\in\e{q}$. 
  If $x_{c}|m_{q,l}$, then  $c\neq t+4$ implies that $b_1\in \{a,a-1\}\cap\{c,{c-1}\}$. But  $c\notin \{a,a-1,\overline{a+1}\}$ and hence $ \{a,a-1\}\cap\{c,{c-1}\}=\emptyset$, a contradiction. Thus $x_{c}\centernot|m_{q,l}$ and consequently $x_{c}|m_l$. 
 
Suppose $c=k$.  Then $c\leq a$ and since $\{a,c\}\in E(\bar{C})$ we have $c<a-1$. If $c\neq {a-2}$, then $b\in \{a,{a-1},\overline{a+1}\}$ implies that $e:=\{c, b\}\in E(\bar{C})$. Set $m_p:=(\x{\e{l}}\x{e}\x{e'_1}/(\x{e_1}\x{e_h}))x_a$. We have $m_p\leq m_l$. If  $m_p=m_l$, then $b_1=a$ which implies that $(\x{\e{l}}\x{e}\x{e'_1}/(\x{e_1}\x{e_h}))x_a$ is a smaller presentation for $m_l$, a contradiction. 
 Thus $m_p<m_l$ and $m_{p,l}=x_a$, as desired. Now suppose $k=c={a-2}$. Then $a\geq 5$, and $a_1\in\{a-1,a-2,{a-3}\}$ since otherwise $\{a_1,a-2\}\in E(\bar{C})$ and by interchanging $x_{b_1}$ in $\x{e_1}$ and $x_k=x_{a-2}$ in the presentation of $m_l$ we get a smaller presentation which is  a contradiction.  From $a_1\in\{a-1,a-2,{a-3}\}$, and $a_1+1<b'_1<b_1\in \{a,a-1\}$ we conclude that $a_1={a-3}$, $b'_1=a-1$ and $b_1=a$. Thus  $b\in \{a,a-1,\overline{a+1}\}$ implies that $b=\overline{a+1}$ and therefore interchanging $x_{b'_1}$ in $\x{e_h}$, where $e_h=\{b'_1,b\}=\{a-1,\overline{a+1}\}$,  and $x_k=x_{a-2}$ will give a smaller presentation,    a contradiction. {  Note that since $a\geq 5$, we have $\{b, a-2\}=\{\overline{a+1}, a-2\}\in E(\bar{C})$. }
 
Assume now that $c\neq k$.  Then there is $e_{i_2}\in\e{l}$ with $c\in e_{i_2}$. If $d$ is another vertex of $e_{i_2}$, then we may assume that $d<a$, because $\e{q}\cap\e{l}=\emptyset$ and if $d>a$, then we can set $m_p:=(\x{\e{l}}\x{e'_{i_1}}/\x{e_{i_2}})x_k$ which yields the result. 

First assume  that $b_1=a$. If $b'_1\leq d$, since $a_1+1<b'_1\leq d<a=b_1$, we have   $e:=\{a_1,d\}\in E(\bar{C})$ with $e<e_1$  and hence interchanging $a$ in $e_1$ and $d$ in $e_{i_2}$ will give  a smaller presentation for $m_l$, a contradiction. Thus $b_1=a$ implies that $b'_1>d$ and since $d<b'_1<b_1$ we have  
$e:=\{d, b_1\}\in E(\bar{C})$, because otherwise $d=1$ and $b_1=t+3$ and since $a_1\leq d$ we have $a_1=1$ and $e_1=\{1,t+3\}$, a contradiction. Moreover, $e_{i_2}\neq e_1,e_h$. 
Set $m_p:=(\x{\e{l}}\x{e'_1}\x{e}\x{e'_{i_1}}/(\x{e_1}\x{e_{i_2}}\x{e_h}))x_k$. We have $m_p\leq m_l$.   If $b=a$, then $(\x{\e{l}}\x{e'_1}\x{e}\x{e'_{i_1}}/(\x{e_1}\x{e_{i_2}}\x{e_h}))x_k$ is a smaller presentation for $m_l$, a contradiction. Hence $b\neq a$ and thus $m_p<m_l$ with $m_{p,l}=x_a$.

Now assume that $b_1= a-1$ which implies that $b\in \{a,a-1\}$, because $b\in\{b_1-1, b_1,\overline{b_1+1}\}\cap\{a-1,a,\overline{a+1}\}$. If $d<b'_1$, then $d<b'_1<b_1=a-1\leq b$. If $\{d,b\}= \{1,t+3\}$, then $d=1$ implies that $a_1=1$ and  since $c\neq a-1$ we have $e_1\neq e_{i_2}$ which implies that   $\{1,a-1\}=e_1<e_{i_2}=\{1,c\}$ and hence $b_1=a-1<c$. Moreover, $b=t+3\in \{a-1,a\}$ implies that $a=t+3$ and hence $c=t+3$, a contradiction to $\{a,c\}\in E(\bar{C})$. 
Thus  $\{d,b\}\neq  \{1,t+3\}$ which implies that $e:=\{d,b\}\in E(\bar{C})$.  Set $m_p:=(\x{\e{l}}\x{e}\x{e'_1}\x{e'_{i_1}}/(\x{e_h}\x{e_1}\x{e_{i_2}}))x_k$.
Thus we may suppose that  $b'_1\leq d$. If  $d<a-1$, then set $e:=\{a_1,d\}$ and since $e_1\neq e_{i_2}$ we set $m_p:=(\x{\e{l}}\x{e}\x{e'_{i_1}}/(\x{e_1}\x{e_{i_2}}))x_k$. Suppose now that $d=a-1$.

Note that from $k\leq a$ we conclude that $k\in\{a-1,a\}$ since otherwise if $k=a-2$, then by interchanging $x_{b_1}=x_{a-1}$ in $\x{e_1}$ and $x_{k}=x_{a-2}$ we get a smaller presentation for $m_l$, and 
  if $k<a-2$, setting $e:=\{k,b\}$, we again get $(\x{\e{l}}\x{e'_1}\x{e}/(\x{e_1}\x{e_h}))x_{a-1}$ as a smaller presentation for $m_l$.  


If $x_{a-1}|m_{q,l}$, or if $\deg_{m_q}x_{a-1}< \deg_{m_l}x_{a-1}$, then set $m_{q'}:=(\x{\e{q}}\x{e_{i_2}}/\x{e'_{i_1}})x_{k'}$. Since $m_{q'}<m_q$ and since $\supp(m_{q',l})\subseteq\supp(m_{q,l})$, by induction hypothesis we are done. Suppose $\deg_{m_q}x_{a-1}= \deg_{m_l}x_{a-1}$. Since $x_{a-1}|m_l$ we have $x_{a-1}|m_q$ as well. Note that $k'\neq a-1$, otherwise interchanging $x_{k'}$ and $x_a$ in $\x{e'_{i_1}}$ will result in a smaller presentation for $m_q$, a contradiction. It follows that there exists $e'_{i_3}=\{a-1,f\}\in \e{q}$ for some $f$ with $f\neq c, a_1$ because $\e{q}\cap \e{l}=\emptyset$. 

 If $x_f|m_{q,l}$, then we must have $a-1=b_1\in \{f,\overline{f-1}, \overline{f+1}\}$, a contradiction. 
Thus $x_f\centernot|m_{q,l}$. As $f\neq a,a-1$ we have $f\neq b$ and also $f\neq k$ which implies that  $f$ appears in an edge of $\e{l}$. If $f=b'_1$, then again $\e{q}\cap\e{l}=\emptyset$ implies that $b=a$. Therefore we have $e'_{i_1}=\{a,c\}, e'_{i_3}=\{a-1, b'_1\}\in \e{q}$ and $e_{i_2}=\{a-1,c\}, e_h=\{a,b'_1\}\in \e{l}$ which contradict the fact that $\e{l}$ and $\e{q}$ are the smallest multisets associated to $m_l$ and $m_q$, respectively. Thus $f\neq b'_1$ and hence $f\notin e_1\cup e_h\cup e_{i_2}\cup\{k\}$.  It follows that  there exists $e_{i_4}\neq e_1,e_{i_2}, e_h$  such that $e_{i_4}=\{f,g\}\in \e l$ for some $g$ with $g\neq a-1$. If $g>a$, { then $a\leq t+2$ and hence $\overline{a+1}=a+1$. }
We have $f\neq a+1$ because otherwise we will have  $g>a+2$ and since $k\in\{a,a-1\}$ by interchanging $x_f$ in $\x{e_{i_4}}$ and $x_k$ one gets a smaller presentation for $m_l$ which is a contradiction. {Note that assuming $f=a+1$ one deduces from $g>a$ and $g\notin \{a,a+1,\overline{a+2}\}$ that $a+1<g\leq t+3$ and hence $\overline{a+2}=a+2$.}  
Now set $e:=\{f,a\}\in E(\bar{C})$ and $m_p:=(\x{\e{l}}\x{e}/\x{e_{i_4}})x_k$.  If $g=a$, then we have $e'_{i_1}=\{a,c\}, e'_{i_3}=\{a-1, f\}\in \e{q}$ and $e_{i_2}=\{a-1,c\}, e_{i_4}=\{a,f\}\in \e{l}$ which again contradict the fact that $\e{l}$ and $\e{q}$ are the smallest multisets associated to $m_l$ and $m_q$, respectively. Thus $g\neq a$ which implies that $g<a-1$.  
 If $g\notin \{a_1,a_1+1, \overline{a_1-1}\}$, then interchanging $a-1$ in $e_1$ and $g$ in $e_{i_4}$ will give a smaller presentation for $m_l$, a contradiction. Thus $g\in \{a_1,a_1+1, \overline{a_1-1}\}$. Since $g\geq a_1$ we have $g\in \{a_1, a_1+1\}$. If $g=a_1$, then  $e_1\leq e_{i_4}$  implies that $f\geq a-1$. Since $f\neq a, a-1$ we have $f>a$. This in particular implies that $a\neq t+3$ and hence it follows from $a_1+1<a-1$  that $e:=\{a_1, a\}\in E(\bar{C})$. Now set $m_p:=(\x{\e{l}}\x{e}/\x{e_{i_4}})x_k$. Suppose $g=a_1+1$.  Then  $e:=\{a_1, f\}\in E(\bar{C})$ because  $a_1\leq f$ and $f\notin \{a_1,a_1+1\}$ by $e_{i_4}=\{f, a_1+1\}\in\e{l}$. Moreover, $e':=\{a_1+1, a-1\}\in E(\bar{C})$ because $a_1+1<b'_1<a-1$. If  $f<a-1$, then $(\x{\e{l}}\x{e}\x{e'}/(\x{e_1}\x{e_{i_4}})x_k$ is a smaller presentation for $m_l$ which is a contradiction.  Thus $f>a$ and hence set $m_p:=(\x{\e{l}}\x{e''}/\x{e_{i_4}})x_k$, where $e'':=\{a_1+1, a\}$. 
 This completes the proof.
\end{proof}


Now we extend the ideal $L$ of  Proposition~\ref{step1} to another ideal which contains  $L$ and has linear quotients. 

\begin{thm}\label{linquo of square}
	Let $I\subset S$ be the edge ideal of the graph  $G=\overline{G_{(b)}}$, with  $t\geq 1$. Then the ideal $(I^{s+1}:x_{t+5})$ has linear quotients for  all $s\geq 0$. 
\end{thm}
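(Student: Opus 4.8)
The plan is to make the colon ideal completely explicit, reduce the problem to a single well-ordered family of minimal generators, and then verify linear quotients, invoking Proposition~\ref{step1} for the part of the family that does not involve $x_{t+5}$. First I would record the structure of $G=\overline{G_{(b)}}$: the vertex $t+4$ is adjacent only to $t+5$, the vertex $t+5$ is adjacent exactly to $\{3,4,\dots,t+3,t+4\}$, and the induced subgraph on $[t+3]$ is the complement $\bar C$ of the cycle $C=1-2-\cdots-(t+3)-1$. Hence, writing $J:=(x_3,x_4,\dots,x_{t+4})$, one has $I=I(\bar C)+x_{t+5}J$, so that $I^{s+1}=(I(\bar C)+x_{t+5}J)^{s+1}=\sum_{k=0}^{s+1}x_{t+5}^{\,k}\,I(\bar C)^{\,s+1-k}J^{\,k}$, and coloning term by term gives
$$(I^{s+1}:x_{t+5})=I(\bar C)^{s+1}+\sum_{j=0}^{s}x_{t+5}^{\,j}\,I(\bar C)^{\,s-j}J^{\,j+1}=\sum_{j=0}^{s}x_{t+5}^{\,j}\,I(\bar C)^{\,s-j}J^{\,j+1}=J\,I^{s},$$
where the middle equality uses $I(\bar C)\subseteq J$ (every chord $\{a,b\}$ of $C$ with $a<b$ has $b\ge3$), which absorbs $I(\bar C)^{s+1}$ into the $j=0$ summand $I(\bar C)^{s}J=L$ of Proposition~\ref{step1}. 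In particular every minimal generator of $(I^{s+1}:x_{t+5})$ has degree $2s+1$, so it suffices to produce one order on $\mathcal G((I^{s+1}:x_{t+5}))$ for which there are linear quotients.

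Next I would set up the order. Each minimal generator has a canonical form $m=x_{t+5}^{\,j}\,\x{\e{}}\,x_{k_0}x_{k_1}\cdots x_{k_j}$ with $j=\deg_{m}x_{t+5}$, with $\e{}$ a multiset of $s-j$ chords of $C$ and $3\le k_0\le\cdots\le k_j\le t+4$, chosen among all such presentations so that the data $(\e{},(k_0,\dots,k_j))$ is smallest for the multiset order of Section~\ref{powers of almost maximal}, exactly as with the smallest presentation in Proposition~\ref{step1}. Then order $\mathcal G((I^{s+1}:x_{t+5}))$ first by the exponent $j$ with larger $j$ earlier, then by the multiset $\e{}$, then by $(k_0,\dots,k_j)$; the generators with $j=0$ are then precisely the minimal generators of $L$, ordered as in Proposition~\ref{step1}. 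Fixing $m_l$ with $\deg_{m_l}x_{t+5}=j$ and $m_q<m_l$, I must produce $m_p<m_l$ with $m_p/\gcd(m_p,m_l)$ a single variable dividing $m_q/\gcd(m_q,m_l)$, and with this order only two cases arise.

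If $\deg_{m_q}x_{t+5}>j$, then $x_{t+5}\mid m_q/\gcd(m_q,m_l)$; here $j<s$, so the chord multiset $\e{}$ of $m_l$ is nonempty, and taking its smallest chord $e_1=\{a_1,b_1\}$ ($a_1<b_1$, hence $b_1\ge3$) and setting $m_p:=(x_{t+5}/x_{a_1})\,m_l$ works: the factor of $m_p$ after $x_{t+5}^{\,j+1}$ is a product of $s-j-1$ chords and the $j+2$ variables $x_{b_1},x_{k_0},\dots,x_{k_j}$, all of which lie in $\{x_3,\dots,x_{t+4}\}$, so $m_p\in x_{t+5}^{\,j+1}I(\bar C)^{s-j-1}J^{\,j+2}\subseteq(I^{s+1}:x_{t+5})$; as $\deg m_p=2s+1$ it is a minimal generator, and since $\deg_{m_p}x_{t+5}=j+1$ it precedes $m_l$, while $m_p/\gcd(m_p,m_l)=x_{t+5}$. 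If instead $\deg_{m_q}x_{t+5}=j$, then cancelling the common factor $x_{t+5}^{\,j}$ turns the requirement into the same statement for the generators $\x{\e{}}$ and $\x{\e{q}}$ of $I(\bar C)^{s-j}J^{\,j+1}$; since the induced order inside the stratum $j$ is of the type used in Proposition~\ref{step1}, this is exactly the assertion that $I(\bar C)^{a}J^{\,b}$ has linear quotients for all $a\ge0$, $b\ge1$, and I would establish it by rerunning the argument of Proposition~\ref{step1} with the $b$ free $J$-variables $x_{k_0},\dots,x_{k_{b-1}}$ in place of the single $x_k$: the chord- and variable-swaps used there only become easier when more free $J$-variables are available, and the case distinctions (whether $\e{q}$ and $\e{}$ share an edge, the relative order of the smallest vertices of their first edges, which variables divide $m_q/\gcd(m_q,m_l)$) together with the secondary induction on the rank of $m_q$ carry over without essential change.

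The cross-stratum case $\deg_{m_q}x_{t+5}>j$ is clean once the order has been arranged so that multiplying by a power of $x_{t+5}$ moves one into a larger stratum, and $I(\bar C)\subseteq J$ is precisely what makes the swap there land on a legitimate generator. The real obstacle is the case $\deg_{m_q}x_{t+5}=j$, i.e.\ extending Proposition~\ref{step1} to $I(\bar C)^{a}J^{\,b}$ with $b\ge1$: this is the long and delicate part, requiring the same bookkeeping that makes the proof of Proposition~\ref{step1} lengthy — checking in each branch that the auxiliary generator $m_p$ really lies in the ideal and that the substituted multiset stays minimal. I expect this extension to go through by the same method, and hence expect the proof of Theorem~\ref{linquo of square} to follow by combining it with the explicit cross-stratum swap above.
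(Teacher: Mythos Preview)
Your decomposition of $(I^{s+1}:x_{t+5})$ is correct and matches the paper's; the identification with $JI^s$ where $J=(x_3,\dots,x_{t+4})$ is a neat observation not made explicit there. Your approach, however, diverges from the paper's in the choice of order: you stratify by $j=\deg_m x_{t+5}$ and place \emph{larger} $j$ first, whereas the paper places \emph{smaller} $j$ first, so that the stratum $L_0=I(\bar C)^sJ$ of Proposition~\ref{step1} comes first. This has real consequences. In the paper's order $x_{t+5}$ never divides $m_{q,l}$, and the proof is a long case analysis on which $x_a$ with $a\le t+4$ does; the key moves construct $m_p$ by \emph{dropping} into a lower stratum via substitutions of the form $m_p=\x{e}m_l/(x_{j_i}x_{t+5})\in L_{k-1}$, absorbing an $x_{t+5}$ and a $J$-variable into a chord. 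Your order reverses this: the cross-stratum case becomes the single clean swap $m_p=(x_{t+5}/x_{a_1})m_l$ (which is correct), but the within-stratum case now requires each $I(\bar C)^{s-j}J^{j+1}$ to have linear quotients in its own right.

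That last claim is a genuine strengthening of Proposition~\ref{step1} (which is only the case $j=0$), and in your approach it carries essentially the entire weight of the theorem. Your assertion that the argument of Proposition~\ref{step1} ``only becomes easier'' with more free $J$-variables is plausible but is not a proof: the smallest-presentation constraint now couples the edge multiset with $j+1$ variables rather than one, and every branch of Proposition~\ref{step1} that hinges on the position of the single $x_k$ (e.g.\ the subcases $k\in\{3,t+3,t+4\}$ in Case~(i), or $k\in\{a-1,a\}$ in Case~(ii)) must be revisited for each $x_{k_i}$. The paper sidesteps this precisely by never isolating the within-stratum problem: its cases (v)--(ix) handle $m_l\in L_k$ uniformly for $k\ge1$ while allowing $m_p$ to land in $L_{k-1}$, so Proposition~\ref{step1} is invoked only once, for $L_0$. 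Your route would succeed once the generalized Proposition~\ref{step1} is fully verified, but as written the within-stratum step is a sketch, not a proof.
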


\begin{proof}
	 Set $J:=  (I^{s+1}:x_{t+5})$. We first determine the minimal generating set $\mathcal{G}(J)$ of $J$. Note that $E(G)=E(\bar{C})\cup\{x_{t+5}x_i:\ 3\leq i\leq t+4\}$, where $C=1-2-\cdots-(t+3)-1$ is the unique induced cycle of $G_{(b)}$ of length$>3$.  Hence, 
	 $$I^{s+1}=\sum_{k=0}^{s+1}I(\bar{C})^{s+1-k}(x_{t+5})^k(x_3,\ldots,x_{t+4})^k.$$
	 By \cite[Proposition~1.2.2]{HHBook}, the ideal $J$ is generated by  monomials $m/\gcd(m, x_{t+5})$, where $m\in I^{s+1}$. It follows that 
	 $$J=I(\bar{C})^{s+1}+\sum_{k=0}^{s}I(\bar{C})^{s-k}(x_{t+5})^k(x_3,\ldots,x_{t+4})^{k+1}.$$
	 Since each edge of $\bar{C}$ contains a vertex in $\{3,\ldots,t+3\}$, we have $I(\bar{C})^{s+1}\subset I(\bar{C})^{s}(x_3,\ldots,x_{t+4})$. Therefore, 
	 $$J=\sum_{k=0}^{s}I(\bar{C})^{s-k}(x_{t+5})^k(x_3,\ldots,x_{t+4})^{k+1}.$$
	 For $0\leq k\leq s$, let $L_{k}:=I(\bar{C})^{s-k}(x_{t+5})^k(x_3,\ldots,x_{t+4})^{k+1}$. Clearly, for $0\leq k, k'\leq s$ with $k\neq k'$ we have $\mathcal{G}(L_k)\cap\mathcal{G}(L_{k'})=\emptyset$, where $\mathcal{G}(L_{k})$ denotes the minimal generating set of $L_k$. Therefore $\mathcal{G}(J)$ is the disjoint union of all $\mathcal{G}(L_k)$ for $0\leq k\leq s$. In particular, $J$ is generated by monomials of degree $2s+1$. For $s=0$, $J$ is generated by variables and hence we have the assertion. Suppose $s\geq 1$.
	 
	 We order the multisets of the edges of $\bar{C}$ as described before Proposition~\ref{step1}.
	Each element $m_l$ of $\mathcal{G}(L_k)$ can be written as  $m_l=\x{\e{l}}{x_{t+5}}^k\x{l}$, where $\e{l}=(e_{1},\ldots,e_{{s-k}})$ is an ordered multiset of the edges of $\bar{C}$ of size $s-k$ with $e_i=\{a_i,b_i\}$, $a_i<b_i$,  and $\x{l}:=x_{j_1}\cdots x_{j_{k+1}}$ with $3\leq j_1\leq \cdots\leq j_{k+1}\leq t+4$.  Similar to the proof of Proposition~\ref{step1} we consider the {\em smallest} presentation for $m_l$, i.e. the  one  in which $\e{l}$ is the smallest possible multiset associated to $m_l$.  So this presentation is unique.

	Now we give an order on the generators of $J$. { To this end we use the lexicographic order $<_{lex}$ on the monomials of the ring $S$ induced by $x_1<x_2<\cdots<x_{t+5}$; see \cite[Section~2.1.2]{HHBook} for the definition of the lexicographic order. 
	}
	
	For $m_q,m_l\in \mathcal{G}(J)$ we let $m_q<m_l$ in the following cases:
	\begin{itemize}
	\item $m_q\in L_{k'}$ and $m_l\in L_{k}$ with $0\leq k'<k\leq s$;
	\item  $m_q,m_l\in L_k$ for some $0\leq k\leq s-1$ and either $\mathbf{e}_q<\mathbf{e}_{l}$ or $\mathbf{e}_q=\mathbf{e}_{l}$ with $\mathbf{x}_q<_{lex}\mathbf{x}_l$;
		\item $m_q,m_l\in L_s$,  and { either $m_q\neq  x_{t+5}^sx_i^{s+1}, m_l\neq x_{t+5}^sx_j^{s+1}$ for all $3\leq i, j\leq t+4$ with $m_q<_{lex}m_l$,  or $m_q= x_{t+5}^sx_i^{s+1}$ and $m_l= x_{t+5}^sx_j^{s+1}$ for some $3\leq i<j\leq t+4$, or  $m_q\neq  x_{t+5}^sx_i^{s+1}$ for all $3\leq i\leq t+4$ and $m_l=  x_{t+5}^sx_j^{s+1}$ for some $3\leq j\leq t+4$.}
			\end{itemize}
	 
	Suppose $\mathcal{G}(J)=\{m_1,\ldots,m_r\}$ with $m_1<\!\cdots\!<m_r$. We show that for any $m_l\in\!\mathcal{G}(J)$ with $l\!>1$, the ideal $\left((m_1,\ldots, m_{l-1}):m_l\right)$ is generated by some variables. Set $J_l:=(m_1,\ldots, m_{l-1})$. By \cite[Proposition~1.2.2]{HHBook}, the ideal $(J_l:m_l)$ is generated by the elements of the  set $\{m_q/\gcd(m_q,m_l):\ 1\leq q\leq l-1\}$. Let $m_{q,l}:=m_q/\gcd(m_q,m_l)$ for $m_q<m_l$. 

	Suppose $m_q=\x{\e q}x_{t+5}^{k'}\x{q}\in \mathcal{G}(L_{k'})$ with $0\leq k'\leq s$ and $\mathbf{e}_{q}=(e'_1,\ldots,e'_{s-k'})\subseteq E(\bar{C})$ with $e'_i=\{a'_i,b'_i\}$, $a'_i<b'_i$, and $\mathbf{x}_{q}=x_{j'_1}\cdots x_{j'_{k'+1}}$ with $3\leq j'_1\leq \cdots\leq j'_{k'+1}\leq t+4$, and suppose  $m_l\in \mathcal{G}(L_{k})$  with $k'\leq k\leq s$. Suppose 	 $\deg m_{q,l}>1$. 
	 We show that there is $1\leq p<l$ such that $\deg m_{p,l}=1$ and $m_{p,l}| m_{q,l}$. This will imply that $J$ has linear quotients. We may assume $k\geq  1$ because by  Proposition~\ref{step1},  $L_0=I(\bar{C})^{s}(x_3,\ldots,x_{t+4})$ has linear quotients.  By the same argument as in the proof of Proposition~\ref{step1} we may assume that $\e{q}\cap \e{l}=\emptyset$.	 
	 First assume $q=1$. Then $m_q=x_1^sx_3^{s+1}$ and $x_1|m_{q,l}$, because $k\geq 1$. 	 {  If   $x_3|\x{l}$, then set $e:=\{1,3\}$  and $m_p:=\x{e}m_l/(x_3x_{t+5})\in L_{k-1}$. Otherwise  we have  $x_3|m_{q,l}$ and  we may set $m_p:=m_lx_3/x_{j_i}$ for some $j_i$.} 
	   Suppose now that $q>1$ and suppose that for all $m_{q'}$ with $q'<q$ there exists  $m_{p'}<m_l$ with $\deg m_{p',l}=1$ and $m_{p',l}|m_{q',l}$. We prove the assertion by induction on $q$.

	 Note that 
	\begin{enumerate}
			\item[(a)] $x_{t+5}\notin \supp(m_{q,l})$, because $\deg_{m_q}x_{t+5}\leq \deg_{m_l}x_{t+5}$. 
		\item[(b)] Assume $a\in \{3,4,\ldots, j_{k+1}-1\}$. Except for the case where  $k=s$ with 
		$m_l= x_{t+5}^sx_a^{s}x_{j_{s+1}}$,  one has $x_a\in (J_l:m_l)$, because $m_p:=x_am_l/x_{j_{k+1}}\in J_l$ and $m_{p,l}=x_a$. 
		\item[(c)] If  $k=s$ and $m_l=x_{t+5}^s x_a^{s}x_{j_{s+1}}$, where $3\leq a<j_{s+1}-1<t+3$, then $e\!:=\{a,j_{s+1}\}\in E(\bar{C})$ and by setting $m_p:=\x{e}m_l/(x_{j_{s+1}}x_{t+5})$ one has $m_p\in L_{s-1}\subseteq J_l$ and $x_a\in (J_l:m_l)$.
				 \item[(d)] For any $a$ with $j_{k+1} + 1 < a < t + 4$, we have $x_a \in (J_l : m_l)$ because
$m_p:=\mathbf{x}_em_l/(x_{t+5}x_{j_{k+1}})\in L_{k-1}\subseteq J_l$.
				 	 \end{enumerate}
		 \medspace

		  If $\supp(m_{q,l}) \cap \{x_3, x_4, \ldots , x_{j_{k+1}-1}\}\neq \emptyset$, then we are
done. Indeed, assume that there exists $x_a \in \supp(m_{q,l})\cap\{x_3, x_4, \ldots , x_{j_{k+1}-1}\}$.
Then by (b) and (c), it is sufficient to check only the case where $k = s$,
$m_l = x^s_{t+5}x^s_ax_{j_{s+1}}$, and $j_{s+1} \in \{a + 1, t + 4\}$.
		  		  Since $\deg_{m_q}x_a\geq s+1$, if $m_q\in L_s$, then  $m_q=x_{t+5}^sx_a^{s+1}>m_l$, a contradiction. Thus $m_q\notin L_s$ and hence there exists $e'_i\in \e{q}$ with  $a\in e'_i$. Suppose $d$ is another vertex of $e'_i$. Since $d\notin\{a,a+1, t+4,t+5\}$ we have $x_d|m_{q,l}$. Set  $m_p:=\x{e'_i}m_l/(x_ax_{t+5})$. Then $m_p<m_l$ and $m_{p,l}=x_d$ and so we are done.  

		
		\medspace
		Moreover, if $x_a\in \supp(m_{q,l})\cap \{x_{j_{k+1}+2}, \ldots, x_{t+3}\}$, then we are again done by (d).	
		Thus, using (a) and the above discussion,  we may suppose that   
		 \begin{eqnarray}\label{pizza}
		\supp(m_{q,l})\subseteq	\{x_1,x_2,x_{j_{k+1}},x_{j_{k+1}+1}, x_{t+4}\}\setminus\{x_{t+5}\}.
						\end{eqnarray}	
			    Note that
			\begin{itemize}
			\item[(i)] if $x_1\in \supp(m_{q,l})$ and  $j_1\neq {t+3}, t+4$,  then set $e:=\{1,j_1\}$ and $m_p:=\x{e}m_l/(x_{j_1}x_{t+5})$;
			\item[(ii)]  if $x_2\in \supp(m_{q,l})$ and there exists $j_i\neq 3,{t+4}$, $1\leq i\leq k+1$, then set $e:=\{2,j_i\}$ and $m_p:=\x{e}m_l/(x_{j_i}x_{t+5})$;
			\item[(iii)]  if $x_{j_{k+1}}\in \supp(m_{q,l})$ and  $j_1<j_{k+1}-1{ <t+3}$, then set $e:=\{j_1,j_{k+1}\}$ and $m_p:=\x{e}m_l/(x_{j_1}x_{t+5})$;
			\item[(iv)] if $x_{j_{k+1}+1}\in \supp(m_{q,l})$ and  $j_1<j_{k+1}{ <t+3}$, then set $e:=\{j_1,j_{k+1}+1\}$ and  $m_p:=\x{e}m_l/(x_{j_1}x_{t+5})$.
			\end{itemize}
			Thus, by (\ref{pizza}) it remains to find $m_p$ in the following cases:
					\begin{itemize}
			\item[(v)] $x_1\in \supp(m_{q,l})$ 
			and  $j_1\in \{{t+3}, t+4\}$; 
			\item[(vi)]  $x_2\in \supp(m_{q,l})$; 
			and  $j_i\in \{3,{t+4}\}$ for all $1\leq i\leq k+1$;
	 { 	\item[(vii)] $x_{j_{k+1}}\in \supp(m_{q,l})$ and  either $j_1\in\{ j_{k+1}-1,j_{k+1}\}$ or $j_{k+1}=t+4$;
		\item[(viii)] $x_{j_{k+1}+1}\in \supp(m_{q,l})$ and either $j_1=j_{k+1}$ or $j_{k+1}=t+3$;
			\item[(ix)] $x_{t+4}\in\supp(m_{q,l})$. }
			\end{itemize} 
			
		{	In (viii) we have $j_{k+1}\neq t+4$ because $x_{t+5}\notin \supp(m_{q,l})$. Since we will check the case $x_{t+4}\in\supp(m_{q,l})$ in (ix), we may suppose in Case~(vii) that $j_{k+1}\neq t+4$ and in Case~(viii)  that $j_{k+1}\neq t+3$.  Moreover, 
		having $j_1\in \{j_{k+1}-1, j_{k+1}\}$ in (vii) we have either $x_{j_1}\in \supp(m_{q,l})$ with  $\supp(\x{l})= \{x_{j_1}\}$ or $x_{j_1+1}\in \supp(m_{q,l})$ with  $\supp(\x{l})= \{x_{j_1}, x_{j_1+1}\}$. In Case~(viii), since $j_1=j_{k+1}$ we get $x_{j_1+1}\in \supp(m_{q,l})$ and $\supp(\x{l})=\{x_{j_1}\}$.  So  combining the two cases (vii) and (viii), we will end up with the following ones:
			
			\begin{itemize}
			\item[(vii')]  
			$x_{j_1}\in \supp(m_{q,l})$   
			and  $\supp(\x{l})=\{x_{j_{1}}\}$;
			\item[(viii')] $x_{j_1+1}\in \supp(m_{q,l})$ 
			and  either $\supp(\x{l})=\{x_{j_{1}}\}$ or $\supp(\x{l})=\{x_{j_{1}}, x_{j_1+1}\}$.
			\end{itemize}}
			
			So we  replace (vii), (viii) with (vii'), (viii'). Now we prove the assertion in the above five cases. Note that since $m_q\neq m_l$ there exists $1\leq b\leq t+5$ such that $\deg_{m_q}x_b<\deg_{m_l}x_b$. Suppose $B$ is the set of all such $b$.

			\medspace
			Case~(v): Since $j_1\in \{t+3,t+4\}$ we have $\supp(\x{l})\subseteq \{x_{t+3},x_{t+4}\}$ and hence $j_{k+1}\in \{t+3,t+4\}$ implies that  $\supp(m_{q,l})\subseteq\{x_1,x_2,x_{t+3}{, x_{t+4}}\}$ by (\ref{pizza}). Since $x_1\in \supp(m_{q,l})$, there exists $e'_i\in\e{q}$ with $e'_i=\{1,b'_i\}$ for some $3\leq b'_i\leq t+2$. Since $x_{b'_i}\notin\{x_1,x_2,x_{t+3},x_{t+4}\}$ we have $x_{b'_i}|m_l$, and it follows from $\supp(\x{l})\subseteq \{x_{t+3},x_{t+4}\}$ that $x_{b'_i}\centernot|\x{ l}$. 
			Thus there exists  $e_j\in\e{l}$ with $b'_i\in e_j$. If $d$ is another vertex of $e_j$, then $d> 1$ because $\e{l}\cap\e{q}=\emptyset$. Set $m_p:=\x{e'_i}m_l/\x{e_j}$ and so we are done in this case.  This case together with (i) imply that if $x_1|m_{q,l}$, then we have the desired $m_p$. Suppose  in the remaining cases that $x_1\notin\supp(m_{q,l})$. 
					
			\medspace
			Case~(vi):  Since $x_{j_{k+1}}\in \supp(\x{l})\subseteq \{x_3,x_{t+4}\}$ we have $\supp(m_{q,l})\subseteq\{x_2,x_{3},x_4{, x_{t+4}}\}$ by (\ref{pizza}). 
			Since $x_2\in \supp(m_{q,l})$, there exists $e'_i\in\e{q}$ with $e'_{i}=\{2,b'_{i}\}$ for some $4\leq b'_{i}\leq t+3$. 
			
			First suppose $m_l\in L_s$. Then $m_l=x_{t+5}^s\x{l}$ and $x_{b'_{i}}\centernot|m_l$ because $\supp(\x{l})\subseteq \{x_3,x_{t+4}\}$. Thus $x_{b'_{i}}|m_{q,l}$ and therefore $b'_{i}=4$. In case $x_{t+4}|\x l$ we set $m_p:=x_{4}m_l/x_{t+4}$. Otherwise, we have $m_l=x_{t+5}^sx_3^{s+1}$, and hence we can set $m_p:=x_{t+5}^sx_3^{s}x_{4}$. 
			
			Suppose now that $m_l\notin L_s$. 		
			There exists  $e_{j}=\{a_{j},b_{j}\}\in\e{l}$ with $a_{j}\neq 2$. If $a_{j}\neq 1$ then set $e:=\{2,b_{j}\}$ and $m_p:=\x{e}m_l/\x{e_{j}}$. 			Suppose that  $e_{j}=\{1,b_{j}\}$ for all $e_{j}\in\e{l}$ with $2\notin e_{j}$.   
			
			If $x_{b'_{i}}|m_{q,l}$, then $b'_{i}=4$. 
			If $x_{t+4}|\x{l}$, then set $m_p:=x_{4}m_l/x_{t+4}$. Otherwise, we have  $\supp(\x{l})=\{x_3\}$. Then $b_{j}=3$ for all $e_j = \{1, b_j\} \in \e l$, since otherwise we get a contradiction to the fact that we have considered the smallest presentation for $m_l$.		
			If $x_2|m_l$, then there exists $e_{r}=\{2,b_{r}\}\in \e{l}$, where $b_{r}>4$ because $\e{q}\cap\e{l}=\emptyset$. Then set $m_p:=\x{e'_{i}}m_l/\x{e_{r}}$. If $x_2\centernot|m_l$, then $m_l=x_{t+5}^kx_1^{s-k}x_3^{s+1}$. Thus $3\in B$ because $3\notin e'_i = \{2, 4\}\in \e q$. If $1\in B$, then set $e:=\{1,4\}$ and $m_{q'}=\x{e}m_q/\x{e'_i}$, and if  $1\notin B$, then there exists $e'_f=\{1,b'_f\}\in \e{q}$ with $b'_f>3$ because $e_j=\{1, 3\} \in \e l$ and $\e q \cap \e l = \emptyset$, so set $m_{q'}=\x{e_j}m_q/\x{e'_f}$ and use induction. 
			
			Now suppose $x_{b'_i}\centernot|m_{q,l}$. Since $\supp(\x{l})\subseteq\{x_3,x_{t+4}\}$, we have $e_r:=\{1,b'_i\}\in \e{l}$ because all  edges in $\e{l}$ contain either $1$ or $2$ and  $\e{q}\cap\e{l}=\emptyset$.  Since $b'_i>3$ we have $\supp(\x{l})=\{x_{t+4}\}$ because otherwise we get a smaller presentation for $m_l$. 			
			If $1\notin B$, then $e'_f:=\{1,b'_f\}\in \e{q}$ with $b'_f\neq b'_i$ because $\e{q}\cap\e{l}=\emptyset$. If $x_{b'_f}|m_{q,l}$, then set $m_p:=x_{b'_f}m_l/x_{t+4}$. If $x_{b'_f}\centernot|m_{q,l}$, then we have $\{2,b'_f\}\in \e{l}$ since $\e{q}\cap\e{l}=\emptyset$ and each edge of $m_l$ contains either $1$ or $2$. Now we have $\{1,b'_i\}, \{2,b'_f\}\in \e{l}$ and $\{1,b'_f\}, \{2,b'_i\}\in \e{q}$ which contradict the fact that both $\e{q}, \e{l}$ are the smallest multisets associated to $m_q,m_l$ respectively. Suppose $1\in B$. Then we can use inductive hypothesis for $m_{q'}:=\x{e_r}m_q/\x{e'_i}$.
			 So we are done in this case too. By settling this case and according to Case~(ii) we have the desired $m_p$ if $x_2|m_{q,l}$. Suppose in the remaining cases that  $\supp(m_{q,l})\subseteq\{x_{j_{k+1}}, x_{{j_{k+1}}+1}{ , x_{t+4}}\}$.

\medspace
Case~(vii') 
 Since $\supp(\x l)=\{x_{j_1}\}$, we have   $\supp(m_{q,l})\subseteq\{x_{j_1}, x_{j_1+1}, { x_{t+4}\}}$.   
There exists $e'_{i_1}=\{j_1,c\}\in\e{q}$ for some $c$ because otherwise $\deg_{m_q}x_{j_1}\leq k'+1\leq k+1=\deg_{m_l}x_{j_1}$, a contradiction. It follows that $j_1\leq t+3$. 
Since $x_c\centernot|m_{q,l}$ we have $x_c|m_l$ and since $c\neq j_1$, there exists $e_{i_2}=\{c,d\}\in \e{l}$ for some $d$. By $\e{q}\cap\e{l}=\emptyset$, we have $d\neq j_1$. Note that $d<j_1$, because otherwise interchanging $x_d$ in $\x{e_{j}}$ and $x_{j_1}$ in $\x{l}$ will result in a smaller presentation of $m_l$.

Suppose  $d\!<\!j_1\!-\!1$. { If $\!\{d,\!j_1\}\!\neq\!\{1,\!t+3\}$,} then set $e\!:=\!\{d,\!j_1\}$ and $m_p\!:=\!\x{e}\x{e'_{i_1}}\!m_l/(\x{e_{i_2}}\!x_{j_1}\!x_{t+5})$. 
{  If $\{d, j_1\}= \{1,t+3\}$, then in case  $1\in B$, set $m_{q'}:=\x{e_{i_2}}m_q/\x{e'_{i_1}}<m_q$ and use induction.  In case $1\notin B$, there exists $e'_{i_3}=\{1,f\}\in \e{q}$ for some $f\notin\{ c, 1,2,t+3, t+4\}$ which implies that $x_f\centernot| m_{q,l}$. Hence $x_f|m_l$ and thus $e_{i_4}=\{f,g\}\in \e{l}$ for some $g\neq 1$.  If $g=t+3$, then $\{c,t+3\}, \{1,f\}\in \e q$ and $\{1,c\}, \{f,t+3\}\in \e l$  which contradict the fact that $\e{q}$ and $\e{l}$ both have minimum presentations. Thus $g\neq t+3$. If $g\neq t+2$, set $e:=\{g,t+3\}$ and $m_p:=\x{e}\x{e'_{i_1}}\x{e'_{i_3}}m_l/(\x{e_{i_2}}\x{e_{i_4}}x_{t+3}x_{t+5})$, and if $g=t+2$, set $e=\{1,t+2\}, e'=\{f, t+3\}$ and $m_p:=\x{e}\x{e'}\x{e'_{i_1}}m_l/(\x{e_{i_2}}\x{e_{i_4}}x_{t+3}x_{t+5})$.

}
Suppose $d=j_1-1$. If $j_1-1\in B$, then set $m_{q'}:=\x{e_{i_2}}m_q/\x{e'_{i_1}}$ and use induction hypothesis. If $j_1-1\notin B$, then $x_{j_1-1}|m_q$. If $x_{j_1-1}|\x{q}$, then interchanging $x_{j_1-1}$ in $\x{q}$ and $x_{j_1}$ in $\x{e'_{i_1}}$ will give a smaller presentation of $m_q$, a contradiction. Therefore there exists $e'_{i_3}=\{j_1-1, f\}\in \e{q}$ for some $f\neq c$.  Then $f\notin \{j_1-2,j_1-1, j_1\}$. If $x_f|m_{q,l}$, then $f={j_1+1}$. Set $m_p:=\x{e'_{i_1}}\x{e'_{i_3}}m_l/(\x{e_{i_2}}x_{j_1}x_{t+5})$. Suppose $x_f\centernot|m_{q,l}$. 
 It follows that there exists $e_{i_4}=\{f, g\}\in \e{l}$ for some $g\neq j_1-1$. We have $f\neq {j_1+1}$ because otherwise, it follows that $g\notin \{j_1,j_1-1,{j_1+1}\}$ which implies that  $m_l$ will have a smaller presentation by interchanging $x_f$ in $\x{e_{i_4}}$ and $x_{j_1}$ in $\x{l}$. Since $f\neq j_1+1$ we have either $g< j_1-1$ or $g=j_1$ because otherwise one can interchange $x_g$ in $\x{e_{i_4}}$ and $x_{j_1}$ in $\x{l}$ to get a smaller presentation. If $g=j_1$, then we have  $\{j_1,c\}, \{j_1-1, f\}\in \e{q}$ and $\{j_1-1, c\}, \{j_1,f\}\in \e{l}$ which contradict the fact that both $\e{q}, \e{l}$ are the smallest multisets associated to $m_q, m_l$, respectively.  Thus $g<j_1-1$. { In case $\{g, j_1\}\neq \{1,t+3\}$, set $e:=\{g, j_1\}$ and $m_p:=\x{e'_{i_1}}\x{e'_{i_3}}\x{e}m_l/(\x{e_{i_2}}\x{e_{i_4}}x_{j_1}x_{t+5})$. In case $\{g, j_1\}= \{1,t+3\}$, set $e:=\{1,t+2\}, e':=\{f,t+3\}$ and $m_p:=\x{e'_{i_1}}\x{e}\x{e'}m_l/(\x{e_{i_2}}\x{e_{i_4}}x_{t+3}x_{t+5})$. Note that since $e'_{i_3}, e_{i_4}\in E(\bar{C})$ we have $f\notin \{1,t+2,t+3\}$ and hence $e'\in E(\bar{C})$.  Thus we are done in this case too. 
  }
 
 \medspace 
{  In general, if $x_{j_1}\in \supp(m_{q,l})$, then by (\ref{pizza}) we have ${j_1}\in \{j_{k+1}, j_{k+1}+1, t+4\}$.  But ${j_1}\leq j_{k+1}$  implies that  $j_1=j_{k+1}$ and hence 
 $\supp{(\x l)}=\{x_{j_1}\}$. Thus by the  discussion in (vii') we are done if $x_{j_1}\in \supp(m_{q,l})$. Therefore, we may assume in the rest of the proof that 
 $x_{j_1}\notin\supp(m_{q,l})$.}

\medspace
Case~(viii'):  
Since $\supp(\x l)\subseteq\{x_{j_1},x_{j_1+1}\}$,  we have $\supp(m_{q,l})\subseteq\{x_{j_1+1},x_{j_1+2},x_{t+4}\}$ by (\ref{pizza}). Moreover, if  $j_1+1= t+4$, then $\supp(m_{q,l})=\{x_{t+4}\}$ and since this case will be discussed in (ix) we may assume here that $j_1+1\neq t+4$. 
Assume first that  $x_{j_1}|\x{\e q}$. Then $e'_{i_1}=\{j_1,c\}\in\e{q}$ for some $c$. Since $c\notin\{j_1, j_1+1\}$, we have $e_{i_2}=\{c,d\}\in\e l$ for some $d$. Since $\e{q}\cap \e l=\emptyset$ and since we have the smallest presentation of $m_l$ we have $d<j_1$. {  If $\{d, j_1+1\}\neq \{1,t+3\}$, then }
set $e:=\{d,j_1+1\}$ and $m_p:=\x{e'_{i_1}}\x{e}m_l/(\x{e_{i_2}}x_{j_1}x_{t+5})$. {  If $\{d, j_1+1\}=\{1,t+3\}$, then set $e:=\{c, t+3\}, e':=\{1,t+2\}$ and $m_p:=\x{e}\x{e'}m_l/(\x{e_{i_2}}x_{t+2}x_{t+5})$. Note that $e:=\{c, t+3\}\in E(\bar{C})$ because $c\notin \{1,t+2,t+3\}$.

Now assume $x_{j_1}\centernot|\x{\e q}$. Suppose $x_{j_1+1}\centernot|\x{\e q}$.  Since $x_{j_1+1}|m_{q,l}$ we have $x_{j_1+1}|\x q$. If $x_{j_1+2}|\x{\e q}$, then $e:=\{a,j_1+2\}\in \e q$ for some $a\notin\{j_1+1, j_1+2, \overline{j_1+3}\}$. Since $x_{j_1}\centernot|\x{\e q}$ we have $a\neq j_1$ too. Thus $\{a, j_1+1\}\in E(\bar{C})$ and hence one can interchange $x_{j_1+2}$ in $\x e$ and $x_{j_1+1}$ in $\x q$ to get a smaller presentation for $m_q$, a contradiction. Thus $x_{j_1+2}\centernot|\x{\e q}$.  Therefore  $\supp(\x{\e q})\cap \supp(m_{q,l})=\emptyset$ which implies that 
$\x{\e q}|\x{\e l}$ and since $k'\leq k$ we have $\e q=\e l$.   But $\e{q}\cap \e l=\emptyset$ implies that $\e q=\emptyset =\e l$. Thus $m_q, m_l\in L_s$.  If $m_l=x_{t+5}^sx_{j_1}^{s+1}$, then set $m_p:=x_{t+5}^sx_{j_1}^{s}x_{j_1+1}$. Suppose $m_l= x_{t+5}^sx_{j_1}^{r}x_{j_1+1}^{s+1-r}$, where $0<r<s+1$. By the order of the generators of $L_s$ we have 
$m_q\neq x_{t+5}^sx_{j_1+1}^{s+1}$.   Since $\supp(m_q)\subseteq \supp(m_{q,l})\cup \supp(m_l)\subseteq \{x_{j_1}, x_{j_1+1},x_{j_1+2},x_{t+4}, x_{t+5}\}$ and $m_q<m_l$, we have $x_{j_1}\in \supp(m_q)$.  If $x_{j_1+2}\in \supp(m_q)$ ($x_{t+4}\in \supp(m_q)$ resp.), then set $m_{q'}:=x_{j_1+1}m_q/x_{j_1+2}$ ($m_{q'}:=x_{j_1+1}m_q/x_{t+4}$ resp.) and use induction. Otherwise, we have $m_q=x_{t+5}^sx_{j_1}^{r'}x_{j_1+1}^{s+1-r'}$ with $0<r'<r$ because $m_q<m_l$.  Set $m_{q'}:=x_{j_1}m_q/x_{j_1+1}$ and use induction. 
}
Suppose now that  $x_{j_1+1}|\x{\e q}$. There exists $e'_{i_1}=\{j_1+1, c\}\in\e{q}$ for some $c$. Since $x_c\notin \supp(\x l)\cup\supp(m_{q,l})$, there exists $e_{i_2}=\{c,d\}\in \e{l}$ for some $d$ with $d\neq j_1+1$. If $d>j_1+1$, then set $m_p:=\x{e'_{i_1}}m_l/\x{e_{i_2}}$. If $d<j_1-1$, then set $e:=\{d,j_1\}$ which is an edge of $\bar{C}$ because $j_1\neq t+3$. 
Now set $m_p:=\x{e}\x{e'_{i_1}}m_l/(\x{e_{i_2}}x_{j_1}x_{t+5})$.   If $d=j_1-1$, then $c\neq j_1-1$ and hence one can  set $e:=\{j_1-1,j_1+1\}, e':=\{c,j_1\}$ which are edges of $\bar{C}$. Now set $m_p:=\x{e}\x{e'}m_l/(\x{e_{i_2}}x_{j_1}x_{t+5})$. Suppose $d=j_1$ which implies that  $c\neq j_1-1$.  It follows that  $x_{j_1}\centernot|\x{q}$ because otherwise, interchanging $x_{j_1}$ in $\x{q}$ and $x_{j_1+1}$ in $\x{e'_{i_1}}$ will give a smaller presentation for $m_q$.  Since $x_{j_1}\centernot|\x{\e q}$ we have 
${j_1}\in B$. Set $m_{q'}=\x{e_{i_2}}m_q/\x{e'_{i_1}}$ and use induction. So we are done also in this case.

{ Now by (iii), (iv), (vii') and (viii') we may assume in the remaining case that $\supp(m_{q,l})=\{x_{t+4}\}$.}

\medspace
 Case~(ix): If  there exists $b\in B\setminus\{1,2, t+5\}$, then setting $m_{q'}:=x_bm_q/x_{t+4}$, we are done by induction hypothesis. 
Suppose $B\subseteq \{1,2,t+5\}$.  
	 
	If $1\in B$, then $m_q, m_l\notin L_s$ and there exists $e'_{i}=\{a'_{i},b'_{i}\}\in\e{q}$ with $1\notin e'_{i}$. If $a'_{i}\neq 2$ set $e:=\{1,a'_{i}\}$,  else if $b'_{i}\neq t+3$ set $e:=\{1,b'_{i}\}$. Then $m_{q'}:=\x{e}m_q/\x{e'_{i}}<m_q$ and $m_{q',l}$ divides $m_{q,l}$. By induction hypothesis we are done. Suppose for all $e'_{i}\in\e{q}$ with $1\notin e'_{i}$ one has $e'_{i}=\{2,t+3\}$. Since  $\deg_{m_q}{x_1}<\deg_{m_l}{x_1}$ there exists  $e_j=\{1,b_{j}\}\in\e{l}$ with $b_{j}\neq 1,2,t+3$ and  since $\e{q}\cap\e{l}=\emptyset$  we have $\deg_{\x{e_q}}{x_{b_{j}}}=0$ for all such $b_{j}$. Since $b_{j}\notin B$, we must have $x_{b_{j}}|\x{q}$. 			 If $b_{j}\neq 3$ by interchanging $x_{b_{j}}$ in $\x{q}$ and $x_{t+3}$ in $\x{e'_{i}}$ we get a smaller presentation of $m_q$ which is a contradiction. Thus $b_{j}=3$. Set $m_{q'}:=\x{e_j}x_{t+3}m_q/(\x{e'_{i}}x_3)$. Then $m_{q'}<m_q$ and $m_{q',l}|m_{q,l}$ and so we are done by induction hypothesis. 			 
			
			Now assume $1\notin B$ and $2\in B$. Again $m_q, m_l\notin L_s$ and there exists $e'_{i}=\{a'_{i},b'_{i}\}\in\e{q}$ with $2\neq a'_{i}<b'_{i}$. If $e'_{i}=\{1,b'_{i}\}$ for all $e'_i\in \e{q}$ with $2\notin e'_{i}$, then $x_1|m_{q,l}$ because otherwise $s-k'=\deg_{m_q}x_1+\deg_{m_q}x_2<\deg_{m_l}x_1+\deg_{m_l}x_2\leq s-k$ and hence $k'>k$, a contradiction.  But $x_1\in \supp(m_{q,l})=\{x_{t+4}\}$  is also a contradiction. Therefore  there exists $e'_{i}=\{a'_{i},b'_{i}\}\in\e{q}$ with $2< a'_{i}<b'_{i}$. 
			 Set $e:=\{2,b'_{i}\}$ and $m_{q'}:=\x{e}m_q/\x{e'_{i}}$. Then $m_{q'}<m_q$ and $m_{q',l}|m_{q,l}$ and so we are again done by induction hypothesis. 		
			
			{ Suppose now that 
$B=\{t+5\}$.  Since $\supp(m_{q,l})=\{x_{t+4}\}$ we have  
			\begin{eqnarray}\label{simple}
			\x{\e{q}}(\x{q}/m_{q,l})=\x{\e{l}}\x{l}. 
			\end{eqnarray}
			Since $\deg_{m_q}x_{t+5}<\deg_{m_l}x_{t+5}$, we have $k'<k$. Moreover, $\deg_{m_l}x_1+\deg_{m_l}x_2\leq s-k$  which implies by (\ref{simple}) that 
			at most $s-k$ edges of $\e q$ contain either $1$ or $2$.   
Now we choose $s-k+1$ edges $e''_1,\ldots, e''_{s-k+1} \in \e{q}$  with the property that no edge in $\e{q}\setminus \{e''_1,\ldots, e''_{s-k+1}\}$ contains $1$ or $2$.  Set $\e{p}:=\{{e''_1}, \ldots, e''_{s-k+1}\}$  and $\x{p}:=x_{t+4}\x{\e{q}}\x{q}/(\x{\e p}m_{q,l})$.  It follows from  the choice of $\e p$ that neither $x_1$ nor $x_2$  divides $\x p$. 
 Hence $m_p:=\x{\e{p}}x_{t+5}^{k-1}\x{p}\in L_{k-1}$. Since by (\ref{simple}), we have $\x{\e p}\x p/x_{t+4}=\x{\e{l}}\x{l}$, we conclude that $m_{p,l}=x_{t+4}$.  	
This completes the proof.}
\end{proof}


Now we  use Theorem~\ref{linquo of square} to show that $I(\overline{G_{(b)}})^k$ has a linear resolution  for $s\geq 2$ when $G_{(b)}$ does not have an induced  $4$-cycle,  that is the number of its vertices is more than or equal to $7$.
\begin{thm}\label{I^k has lin res}
	Let $G$ be a   graph on $n\geq 7$ vertices such that  $G_{(b)}$ is its complement. Let $I:=I(G)$ be the edge ideal of $G$. Then $I^s$ has a linear resolution for $s\geq 2$.
\end{thm}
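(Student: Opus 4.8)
The plan is to bound $\mathrm{reg}(I^{s+1})$ from above by applying Lemma~\ref{Dao} with the variable $x=x_{t+5}$, and then to show that both terms appearing on the right-hand side are equal to $2s+2$, which is precisely the degree in which $I^{s+1}$ is generated. Since one always has $\mathrm{reg}(I^{s+1})\geq 2s+2$, this forces $\mathrm{reg}(I^{s+1})=2s+2$, i.e.\ $I^{s+1}$ has a linear resolution; letting $s+1$ run over $\{2,3,\dots\}$ then gives the theorem, and no induction on $s$ is needed. Before starting I would record two elementary facts: first, $n\geq 7$ forces $t\geq 2$, so that the unique induced cycle $C=1-2-\cdots-(t+3)-1$ of $G_{(b)}$ has length $t+3\geq 5$; second, $E(G)=E(\bar C)\cup\{x_{t+5}x_i : 3\leq i\leq t+4\}$.

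For the colon term, Theorem~\ref{linquo of square} says that $(I^{s+1}:x_{t+5})$ has linear quotients, and (as established inside the proof of that theorem) it is generated in the single degree $2s+1$. By \cite[Theorem~8.2.1]{HHBook} an equigenerated monomial ideal with linear quotients has a linear resolution, so $\mathrm{reg}\bigl((I^{s+1}:x_{t+5})\bigr)=2s+1$, and hence $\mathrm{reg}\bigl((I^{s+1}:x_{t+5})\bigr)+1=2s+2$.

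For the term $\mathrm{reg}\bigl(I^{s+1}+(x_{t+5})\bigr)$, I would argue that $I^{s+1}+(x_{t+5})=(x_{t+5})+I(\bar C)^{s+1}$: using the description of $E(G)$, every minimal generator of $I^{s+1}$ arising from an $(s+1)$-fold product of edges at least one of which is incident to the vertex $t+5$ lies in $(x_{t+5})$, while the remaining minimal generators — those coming from products of edges of $\bar C$ only — are exactly the minimal generators of $I(\bar C)^{s+1}$. Since $x_{t+5}$ divides no generator of $I(\bar C)^{s+1}$, it is a nonzerodivisor on $S/I(\bar C)^{s+1}$, whence $\mathrm{reg}\bigl(I(\bar C)^{s+1}+(x_{t+5})\bigr)=\mathrm{reg}\bigl(I(\bar C)^{s+1}\bigr)$. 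Now $\bar C$ is the complement of a cycle of length $t+3\geq 5$, so by \cite[Theorem~4.1]{BHZ} its edge ideal has maximal finite index equal to $t>1$; therefore, by \cite[Corollary~4.4]{BHZ}, $I(\bar C)^{s+1}$ has a linear resolution for every $s+1\geq 2$, i.e.\ $\mathrm{reg}\bigl(I(\bar C)^{s+1}\bigr)=2s+2$. Hence $\mathrm{reg}\bigl(I^{s+1}+(x_{t+5})\bigr)=2s+2$ as well.

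Feeding these two computations into Lemma~\ref{Dao} yields $\mathrm{reg}(I^{s+1})\leq\max\{2s+2,\,2s+2\}=2s+2$, and the reverse inequality is automatic since $I^{s+1}$ is generated in degree $2s+2$; so $I^{s+1}$ has a linear resolution for all $s\geq 1$, equivalently $I^s$ has a linear resolution for all $s\geq 2$. The heavy lifting has already been carried out in Theorem~\ref{linquo of square}; the only genuinely new work here is the elementary identification $I^{s+1}+(x_{t+5})=(x_{t+5})+I(\bar C)^{s+1}$ together with the reduction modulo $x_{t+5}$, so I do not anticipate any real obstacle. The one point to be careful about is to invoke \cite[Corollary~4.4]{BHZ} only in the range $s+1\geq 2$, which is exactly the range relevant to the statement, and to use the hypothesis $n\geq 7$ (equivalently $t\geq 2$) precisely at the step where maximal finite index $t>1$ of $I(\bar C)$ is needed.
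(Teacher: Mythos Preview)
Your proposal is correct and follows essentially the same route as the paper: apply Lemma~\ref{Dao} with $x=x_{t+5}$, identify $I^{s}+(x_{t+5})$ with $I(\bar C)^{s}+(x_{t+5})$ and reduce to $\mathrm{reg}(I(\bar C)^{s})$ via \cite[Corollary~4.4]{BHZ}, and handle the colon ideal via Theorem~\ref{linquo of square}. The only difference is cosmetic---you index by $s+1$ with $s\geq 1$ where the paper indexes by $s$ with $s\geq 2$---and your nonzerodivisor justification for $\mathrm{reg}(I(\bar C)^{s+1}+(x_{t+5}))=\mathrm{reg}(I(\bar C)^{s+1})$ is just a rephrasing of the paper's observation that $x_{t+5}$ is absent from the support of the generators.
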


\begin{proof} 
	By construction, $n=t+5$, where $t\geq 2$. We apply Lemma~\ref{Dao} for $I^s$ and $x:=x_{t+5}$ to prove the assertion. To  this end, we first compute $\mathrm{reg}(I^s+( x_{t+5}))$.  Setting $C=1-2-\cdots-(t+3)-1$, 	for all $s\geq 1$ we have 
	\begin{align*}
	I^s+( x_{t+5})&=(I(\bar{C})+(x_{t+5})(x_3,\ldots, x_{t+4}))^s+(x_{t+5})
	=I(\bar{C})^s+(x_{t+5}).
	\end{align*}
	 Since $x_{t+5}$ does not appear in the support of the generators of $I(\bar{C})^s$, we have 
	$$\mathrm{reg}(I^s+(x_{t+5}))=\mathrm{reg}(I(\bar{C})^s+(x_{t+5}))=\mathrm{reg}(I(\bar{C})^s).$$
	It is proved in \cite[Corollary~4.4]{BHZ} that $I(\bar{C})^s$ has a linear resolution for $s\geq 2$ when $|C|>4$, which is the case here because $t+3>4$. Thus $\mathrm{reg}(I^s+(x_{t+5}))= 2s$ for $s\geq 2$. 
	 On the other hand $(I^s:x_{t+5})$ has linear quotients  by  Theorem~\ref{linquo of square}, and it is seen in its proof  that   $(I^s:x_{t+5})$  is generated in degree $2s-1$ for $s\geq 1$. Therefore, $(I^s:x_{t+5})$ has a $(2s-1)$-linear resolution for $s\geq 1$, see \cite[Theorem~8.2.1]{HHBook}, and hence $\mathrm{reg}((I^s:x_{t+5}))=2s-1$. Now using Lemma~\ref{Dao} we have $\mathrm{reg}(I^s)\leq 2s$ for $s\geq 2$. Since $I^s$ is generated in degree $2s$ we conclude that $I^s$ has a linear resolution for $s\geq 2$.
\end{proof}

\end{document}